\documentclass[11pt]{amsart}

\usepackage{amsmath}
\usepackage{amsfonts}
\usepackage{amssymb}
\usepackage{latexsym}
\usepackage{graphicx}
\usepackage{enumerate}
\usepackage{multirow}
\usepackage{subfigure}
\usepackage{booktabs}

\usepackage{cite}
\usepackage{hyperref}
\usepackage{url}

\newtheorem{theorem}{Theorem}[section]
\newtheorem{proposition}[theorem]{Proposition}
\newtheorem{lemma}[theorem]{Lemma}
\newtheorem{cor}[theorem]{Corollary}
\newtheorem{remark}{Remark}
\newtheorem{definition}{Definition}
\newtheorem{example}{Example}

\newcommand{\paren}[1]{\left(#1\right)}
\newcommand{\bracket}[1]{\left[#1\right]}
\newcommand{\set}[1]{\left\{#1\right\}}

\newcommand{\ep}{\epsilon}

\newcommand{\be}{\begin{equation}}
\newcommand{\ee}{\end{equation}}
\newcommand{\bes}{\begin{equation*}}
\newcommand{\ees}{\end{equation*}}
\newcommand{\R}{{\bf{R}}}
\newcommand{\T}{{\bf{T}}}

\newcommand{\Z}{{\bf{Z}}}
\newcommand{\N}{{\bf{N}}}
\newcommand{\ds}{\displaystyle}

\newcommand{\xb}{\mathbf{x}}
\newcommand{\kb}{\mathbf{k}}

\numberwithin{equation}{section}

\title[Magma equations in dimension two and higher]{Existence theory for magma equations in dimension two and higher}
\author[Ambrose]{David M. Ambrose}
\address{Drexel University, Department of Mathematics, Philadelphia PA, USA}
\author[Simpson]{Gideon Simpson}
\address{Drexel University, Department of Mathematics, Philadelphia PA, USA}
\author[Wright]{J. Douglas Wright}
\address{Drexel University, Department of Mathematics, Philadelphia PA, USA}
\author[Yang]{Dennis G. Yang}
\address{Drexel University, Department of Mathematics, Philadelphia PA, USA}

\begin{document}

\begin{abstract}

  We examine a degenerate, dispersive, nonlinear wave equation related
  to the evolution of partially molten rock in dimensions two and
  higher.  This simplified model, for a scalar field capturing the
  melt fraction by volume, has been studied by direct numerical
  simulation where it has been observed to develop stable solitary
  waves.  In this work, we prove local in time well-posedness results
  for the time dependent equation, on both the whole space and the
  torus, for dimensions two and higher.  We also prove the existence
  of the solitary wave solutions in dimensions two and higher.

\end{abstract}

\maketitle

\section{Introduction}

Consistent systems of partial differential equations governing the
flow of partially molten rock in the Earth's interior began appearing
in the 1980s in \cite{McKenzie:1984vn,Scott:1986kf}.  These models
captured the essential features of the mechanics of partial melts. Important features include: there are at least two phases (the molten rock, which
behaves as a true fluid, and a residual porous matrix of rock); the
molten rock migrates via porous flow through the residual matrix; on the geologic time scale of tens or hundreds of thousands of years,
the matrix deforms viscously; and inertia can be neglected in both phases due to the highly viscous nature of the problem.

These models typically take the form:
\begin{subequations}
\label{e:magmasystem}
  \begin{gather}
\label{e:melt1}
\partial_t(\rho_{\rm melt} \phi) + \nabla \cdot(\rho_{\rm melt} \phi {\bf v}_{\rm melt}) =\hphantom{-}\text{Melting/Freezing},\\
\label{e:matrix1}
\partial_t(\rho_{\rm matrix} (1-\phi)) + \nabla \cdot(\rho_{\rm matrix} (1-\phi) {\bf v}_{\rm matrix}) = -\text{Melting/Freezing},\\
\label{e:darcy1}
\phi({\bf v}_{\rm melt} - {\bf v}_{\rm matrix}) = - \tfrac{k_{\rm matrix}}{\mu_{\rm melt}}( \nabla p - \rho_{\rm melt} g {\bf z}),\\
\label{e:stress1}
\begin{split}
0 = \bar \rho g {\bf z} - \nabla p &+ \nabla \cdot [2(1-\phi)\mu_{\rm matrix}\dot e_{\rm matrix}]\\
&+ \nabla [(1-\phi)(\zeta_{\rm matrix} - \tfrac{2}{3}\mu_{\rm matrix})\nabla \cdot {\bf v}_{\rm matrix}].
\end{split}
\end{gather}
\end{subequations}

In the above expressions, $\rho_{\rm melt}$ and $\rho_{\rm matrix}$ are the densities of the melt and matrix phases, while ${\bf v}_{\rm melt}$ and ${\bf v}_{\rm matrix}$  are the velocities.  The porosity, $\phi$, is the volume fraction of melt, and $p$ is the pressure.  Thus, \eqref{e:melt1} and \eqref{e:matrix1} merely state conservation of mass between the two phases; additional thermodynamic information must be provided to derive phase changes.  

The reader may recognize \eqref{e:darcy1} as Darcy's Law, with $k_{\rm matrix}$ the permeability of the matrix, $\mu_{\rm melt}$ the shear viscosity of the melt, $p$ the (joint) fluid pressure, and $g{\bf z}$ the buoyancy force.  Lastly, \eqref{e:stress1} reflects the force balance in the matrix, with $\mu_{\rm matrix}$ its shear viscosity, $\dot e_{\rm matrix}$ the strain rate, and $\zeta_{\rm matrix}$ the bulk viscosity.  Constitutive relations must be introduced for the viscosities and the permeability, as they typically depend on the porosity.

Models like this have been revisited, rederived, and extended in a variety of ways, with novel predictions for large scale Earth dynamics; see, for
example,
\cite{Bercovici:2001up,Bercovici:2001vj,Ricard:2001ws,Simpson:2010fo,Simpson:2010fq,Spiegelman:1993vj,Spiegelman:1994wi,
  Katz:2013jx,Takei:2009bz,Takei:2009in,Takei:2009jt,Takei:2013kt,Katz:2006uo}.

\subsection{The Scalar Magma Equation}

Under a series of approximations, one can obtain the reduced model,
\begin{equation}\label{magma}
  \phi_t + \partial_{x_d} \left( \phi^n \right) - \nabla \cdot \left(\phi^n\nabla \phi_t\right) = 0.
\end{equation}
Here, $\phi$ is the {\bf rescaled} porosity, with units of $\phi_0<1$, a characteristic value.

Following the derivation of \cite{Spiegelman:2006fw, simpson2008the},  \eqref{magma} can be obtained from \eqref{e:magmasystem} as follows. First, we neglect phase changes in \eqref{e:melt1} and \eqref{e:matrix1} and assume constant density of each phase.  After dividing out by the density and adding the two equations,
\begin{equation}
\label{e:meanv}
    \nabla \cdot [\phi {\bf v}_{\rm melt} + (1-\phi) {\bf v}_{\rm matrix}]=0.
\end{equation}
The divergence of \eqref{e:darcy1} is then taken, and \eqref{e:meanv} is used to eliminate ${\bf v}_{\rm melt}$,
\begin{equation}
\label{e:darcy2}
    \nabla \cdot {\bf v}_{\rm matrix} = \nabla \cdot \left[\tfrac{k_{\rm matrix}}{\mu_{\rm melt}} (\nabla p - \rho_{\rm melt} g {\bf z})\right].
\end{equation}
If we now assume that the shear viscosities are constants, then, after applying a vector identity,
\begin{equation*}
\begin{split}
    -\mu_{\rm matrix} \nabla \times \nabla \times {\bf v}_{\rm matrix} &+ \nabla \left[ ((1-\phi)\zeta_{\rm matrix} + \tfrac{4}{3}\mu_{\rm matrix})\nabla \cdot {\bf v}_{\rm matrix}\right]\\
    &\quad - (1-\phi) \Delta \rho {\bf z} = \nabla p - \rho_{\rm melt}g {\bf z}.
\end{split}
\end{equation*}
Letting $\mathcal{C} = \nabla \cdot {\bf v}_{\rm matrix}$ be the {\it compaction rate}, and assuming there are no large scale shear motions,
\begin{equation*}
      \nabla \left[ ((1-\phi)\zeta_{\rm matrix} + \tfrac{4}{3}\mu_{\rm matrix})\mathcal{C}\right] - (1-\phi) \Delta \rho {\bf z} = \nabla p - \rho_{\rm melt}g {\bf z}.
\end{equation*}
Multiplying this equation by $k_{\rm matrix}/\mu_{\rm melt}$ and taking the divergence, $\nabla p - \rho_{\rm melt}g {\bf z}$ can be eliminated with \eqref{e:darcy2} to obtain,
\begin{equation}
\label{e:stress2}
    \mathcal{C} - \nabla \cdot \left\{ \tfrac{k_{\rm matrix}}{\mu_{\rm melt}}   \nabla \left[  ((1-\phi)\zeta_{\rm matrix} + \tfrac{4}{3}\mu_{\rm matrix})\mathcal{C}\right]\right \} = - \nabla \cdot \left[\tfrac{k_{\rm matrix}}{\mu_{\rm melt}}  (1-\phi) \Delta \rho g {\bf z}\right].
\end{equation}
If we now make the commonly used approximations that $k_{\rm matrix} =k_0 \phi^n$ and $\zeta_{\rm matrix}$ is constant, then, after nondimensionalization, \eqref{e:stress2} and \eqref{e:matrix1} become
\begin{gather*}
    \partial_{t'} \phi' = - \phi_0 \nabla' \phi' \cdot {\bf v}'_{\rm matrix} + (1- \phi_0 \phi') \mathcal{C}', \\
    \begin{split}
    &\mathcal{C}' - \nabla' \cdot\set{ (\phi')^n \nabla'\bracket{\paren{\frac{(1-\phi_0\phi')\zeta_{\rm matrix} + \tfrac{4}{3}\mu_{\rm matrix})}{(1-\phi_0)\zeta_{\rm matrix} + \tfrac{4}{3}\mu_{\rm matrix})}}\mathcal{C}}} \\
    &\hspace{2.5in} = -\nabla' \cdot \bracket{\frac{1-\phi_0 \phi'}{1-\phi_0} (\phi')^n {\bf z}}.
    \end{split}
\end{gather*}
Finally, if the characteristic porosity is small, i.e., $0< \phi_0\ll 1$, these two equations become
\begin{gather*}
      \partial_{t'} \phi' =  \mathcal{C}', \\
      \mathcal{C}' - \nabla' \cdot\set{ (\phi')^n \nabla'\mathcal{C}'} = -\nabla' \cdot \paren{(\phi')^n {\bf z}}.
\end{gather*}
Substituting the first equation into the second, and dropping the primes, we recover \eqref{magma}.

\subsection{Properties of the Scalar Equation}

In Equation \eqref{magma}, the temporal variable satisfies $t>0$, and the spatial variable satisfies $\xb \in \T^d$ or $\xb \in \R^d$.  The dependent variable $\phi = \phi(\xb,t)\in \R$ is the porosity of the media,
corresponding to volume fraction that is melt.  The nonlinearity,
$\phi^n$, reflects the permeability of the solid rock, with typical
physical values of $n\in[2,3]$.  Equation \eqref{magma} is sometimes
called ``the magma equation,'' a terminology we adopt here, and this
reduced model is the focus of this work.

Some of the key features of \eqref{magma} are that it is dispersive,
degenerate, and nonlocal.  To see the dispersive aspect, if we
formally linearize as
$\phi = \phi_0 + \epsilon e^{i (\kb\cdot \xb - \omega t)}$ with
$\phi_0>0$, one obtains the dispersion relation
\begin{equation*}
  \omega(\kb) = \frac{n \phi_0^{n-1} k_d}{1 + \phi_0^n|\kb|^2}.
\end{equation*}
The degeneracy can be seen in the highest derivative term in the event
that $\phi \to 0$.  The nonlocality can be seen by rewriting
\eqref{magma} as
\begin{subequations}
  \begin{align}
    \label{e:magmasystem1}
    \phi_t & = \mathcal{C},\\
    \label{e:magmasystem2}
    [I - \nabla \cdot(\phi^n \nabla \bullet) ]\mathcal{C} &= - \partial_{x_d}(\phi^n).
  \end{align}
\end{subequations}
Suppose that $\phi$ is given such that $\phi$ is non-negative and sufficiently smooth.  
Then
subject to boundary conditions, \eqref{e:magmasystem2} is elliptic
with respect to $\mathcal{C}$.  Having solved the elliptic equation
for $\mathcal{C}$, \eqref{e:magmasystem1} gives us the time derivative
of $\phi$.  This matter of maintaining ellipticity of
\eqref{e:magmasystem2} is at the heart of analyzing \eqref{magma}.

Numerical simulations of \eqref{magma} have been conducted in
dimensions $d=1,2,3$, usually with the boundary condition that
$\phi\to 1$ as $|\xb|\to \infty$.  This is often approximated on a
large periodic domain.  In addition, initial data tends to break up
into rank-ordered traveling solitary waves.  These waves propagate in
the preferred direction $x_d$ and are radially symmetric in a comoving
frame, and have been observed to be stable.  While they are not
expected to be integrable, the solitary waves have been observed to
have elastic-like collisions.  The solitary waves can be obtained with
the ansatz,
\begin{equation}
  \label{e:travelingwave1}
  \phi(\xb,t) = \bar{Q}(\bar{r}(\xb,t)), \quad \bar{r}^2(\xb,t) = \sum_{j=1}^{d-1} x_j^2 +
  (x_d - \bar{c}t)^2.
\end{equation}
For examples, see
\cite{Simpson:2011fx, Scott:1986kf,scott1984magma,Barcilon:1986wd,Barcilon:1989ve,Wiggins:2012iv}.

While these numerical studies have been fruitful for understanding the
dynamics of the equation, and for gaining insight into the larger
system of equations from which it has been derived, relatively little
has been done to analyze \eqref{magma}.  In \cite{Simpson:2007kx}, the
authors established local and global in time well-posedness results in
$d=1$ by fixed point iteration methods in Sobolev spaces.  In
\cite{Nakayama:1992vi}, the authors proved the existence of traveling
waves in $d=1$ with wave speed $c>n>1$, asymptotically converging to
unity as $x\to \pm \infty$.  This is accomplished in $d=1$ by
obtaining a first integral.  Some additional regularity properties of
the solutions were obtained in \cite{Simpson:2008ia}, and their
stability was studied in that work, along with \cite{Simpson:2008ww}.
However, to the best of the authors' knowledge, no mathematically
rigorous results have been obtained for $d\geq 2$.

\subsection{Main Results}

In this work, we aim to establish local-in-time well-posedness results
for \eqref{magma} in dimensions $d \in \Z$, $d \ge 2$.
We also establish the existence of traveling solitary waves in dimensions $d \in \N$. These
results are:

\begin{theorem}
  \label{t:main1}

  Let $s \in \N$ such that $s> d/2 + \lfloor d/2 \rfloor + 2$ and let
  \[
    U\equiv \left\{ f-1 \in H^{s}(\R^d) : \| 1/f \|_{L^\infty(\R^d)} <
      \infty \text{ and } f > 0 \right\},
  \]
  and assume $\phi_0 \in U$.

  There exists $T_\alpha< 0 < T_\omega$ and
  $\phi \in C^{1}((T_\alpha,T_\omega);U)$ with $\phi(0) = \phi_0$ such
  that $\phi$ satisfies \eqref{magma} for all
  $t \in (T_\alpha,T_\omega)$.

  $T_\omega$ (respectivley $T_\alpha$) is maximal in the following
  sense that we have the dichotomy:
  \begin{enumerate}
  \item $T_\omega = + \infty$ (respectivley $T_\alpha = - \infty$), or
  \item
    $\lim_{t \to T_\omega^-} \left( \| \phi(t)-1 \|_{H^s(\R^d)} +
      \|1/\phi(t)\|_{L^\infty(\R^d)} \right) = + \infty$
    \\(respectivley
    $\ds \lim_{t \to T_\alpha^+} \left( \| \phi(t)-1 \|_{H^s(\R^d)} +
      \|1/\phi(t)\|_{L^\infty(\R^d)} \right) = + \infty$ ).
  \end{enumerate}
\end{theorem}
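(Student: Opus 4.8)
The plan is to reformulate the magma equation as an abstract evolution equation on the Hilbert manifold $U$ and apply a Picard-type fixed point / local existence theorem for ODEs on Banach spaces. The key observation from the excerpt is that $\phi_t = \mathcal{C}$, where $\mathcal{C}$ is recovered from $\phi$ by solving the elliptic problem \eqref{e:magmasystem2}. So I would define a vector field $F(\phi) = [I - \nabla\cdot(\phi^n\nabla\bullet)]^{-1}\left(-\partial_{x_d}(\phi^n)\right)$ and show that $F\colon U \to H^s(\R^d)$ is well-defined and locally Lipschitz; then the abstract existence theorem produces a unique $C^1$ integral curve $\phi(t)$ with $\phi(0)=\phi_0$, which is exactly a solution of \eqref{magma}.

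The heart of the argument is establishing that $F$ maps into $H^s$ and is locally Lipschitz, and this is where the condition $s > d/2 + \lfloor d/2\rfloor + 2$ and the constraint $\|1/\phi\|_{L^\infty}<\infty$ enter. First I would record that for $\phi \in U$ the coefficient $\phi^n$ is bounded below by $\|1/\phi\|_{L^\infty}^{-n} > 0$, so the operator $L_\phi \equiv I - \nabla\cdot(\phi^n\nabla\bullet)$ is uniformly elliptic; the Lax--Milgram theorem (or direct energy estimates in $H^1$) then gives a bounded inverse $L_\phi^{-1}\colon H^{k-1}\to H^{k+1}$ for a range of $k$. Since $s > d/2$, the space $H^s$ is an algebra and $f\mapsto f^n$ maps $U$ smoothly into $1 + H^s$, so the source term $-\partial_{x_d}(\phi^n)\in H^{s-1}$. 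Elliptic regularity bootstraps the solution $\mathcal{C}$ from $H^1$ up to $H^{s}$; the role of the extra $\lfloor d/2\rfloor$ derivatives is to absorb the loss incurred when differentiating the variable coefficient $\phi^n$ and to keep the coefficients in a high enough Sobolev space that the regularity estimates close. To obtain the Lipschitz bound, I would estimate $F(\phi_1)-F(\phi_2)$ using the resolvent identity $L_{\phi_1}^{-1}-L_{\phi_2}^{-1} = L_{\phi_1}^{-1}(L_{\phi_2}-L_{\phi_1})L_{\phi_2}^{-1}$, together with the algebra property and the uniform ellipticity on a neighborhood where $\|1/\phi\|_{L^\infty}$ stays bounded.

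Having shown $F$ is locally Lipschitz on the open set $U$, the standard Picard--Lindel\"of theorem on Banach spaces yields a maximal interval $(T_\alpha, T_\omega)$ with $T_\alpha < 0 < T_\omega$ and a unique solution $\phi \in C^1((T_\alpha,T_\omega);U)$. The regularity $\phi\in C^1$ follows because $\phi_t = F(\phi)$ is continuous in $t$ (as $F$ is continuous and $\phi$ is continuous). For the blow-up dichotomy, I would invoke the standard continuation criterion: if $T_\omega < \infty$ then the solution must leave every compact subset of $U$ as $t\to T_\omega^-$. The point here is to identify what ``leaving $U$'' means quantitatively. The set $U$ is open in the affine space $1 + H^s$, and a sequence exits $U$ either by having $\|\phi(t)-1\|_{H^s}\to\infty$ or by approaching the boundary where positivity/the lower bound degenerates, i.e. $\|1/\phi(t)\|_{L^\infty}\to\infty$. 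I would make this precise by showing that any set of the form $\{\|\phi-1\|_{H^s}\le M,\ \|1/\phi\|_{L^\infty}\le M\}$ is a closed set on which $F$ admits a uniform Lipschitz constant, so the solution cannot approach $T_\omega$ while staying in such a set; this forces the stated limit.

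The main obstacle I anticipate is the elliptic regularity / Lipschitz estimate for $F$ at the top Sobolev level, specifically controlling the commutator terms that arise when one applies $s$ derivatives to $L_\phi^{-1}$ against a variable coefficient $\phi^n$ that itself only lives in $H^s$. The delicate bookkeeping of derivative counting — why exactly $d/2 + \lfloor d/2\rfloor + 2$ derivatives suffice — is what drives the regularity threshold, and getting the resolvent-difference estimate to close uniformly on neighborhoods where $\|1/\phi\|_{L^\infty}$ is controlled is the crux. Everything downstream (existence, uniqueness, $C^1$ regularity, the blow-up alternative) then follows from the abstract ODE machinery once $F$ is known to be locally Lipschitz with the right domain and range.
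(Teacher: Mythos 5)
Your proposal follows essentially the same route as the paper: rewrite \eqref{magma} as $\phi_t = -L_{\phi^n}^{-1}\left[\partial_{x_d}\phi^n\right]$, show this vector field is locally Lipschitz from $U$ into $H^s(\R^d)$ via elliptic regularity for coefficients in $1+H^s$ (bootstrapped over $\lfloor d/2\rfloor$ steps, which is exactly where the threshold $s > d/2 + \lfloor d/2\rfloor + 2$ enters) together with a resolvent-difference estimate, and then apply the Picard theorem on Banach spaces to get existence, uniqueness, $C^1$ regularity, and the continuation dichotomy in terms of $\|\phi-1\|_{H^s} + \|1/\phi\|_{L^\infty}$. Your resolvent identity is precisely the computation underlying the paper's Lipschitz estimate (Corollary \ref{lip2}, where $L_b(u-v) = \nabla\cdot\left((a-b)\nabla u\right)$), and your use of Lax--Milgram plus bootstrap in place of the paper's citation of Krylov's theory for the base elliptic solvability on $\R^d$ is an immaterial difference.
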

The essential ingredient in this theorem is that if the data is
continuous and bounded from below away from zero, it
remains so for the time of existence.  While this result is stated in
terms of $\R^d$, an analogous result will be shown to hold in $\T^d$,
where many simulations are performed.

We also prove an existence theorem for radially symmetric traveling solitary
waves satisfying \eqref{e:travelingwave1}.  These solutions are unimodal and converge at spatial infinity
to a constant state. Specifically, we show: 
\begin{theorem}
  \label{t:main2}
  For each $d \in \N$, $n \in [2,3]$, and
  $c \in [1.55, n)$, there
  exists a smooth function $Q : \R_{\ge 0} \to \R$
  with the following properties:
  \begin{enumerate}
  \item $Q(0) = 1$, $Q_r(0) = 0$, and
    $Q_{rr}(0) < 0$. Here, $Q_r$ and
    $Q_{rr}$ denote the first and second derivatives of
    $Q(r)$, respectively.
  \item $Q_r(r) < 0$ for all $r > 0$.
  \item $\lim_{r \to \infty} Q(r)$ exists, and
    $0 < \lim_{r \to \infty} Q(r) < 1$.
  \item For every $\bar{q}_0>0$, putting
    $\phi(\xb,t)=\bar{Q}\left(|\xb - \bar{c}
      t {\bf e}_d |\right) = \bar{q}_0 Q\big( \bar{q}_0^{-\frac{n}
      {2}} |\xb - \bar{c} t {\bf e}_d |\big)$ solves 
      \eqref{magma} with $\bar{c} = \bar{q}_0^{n-1} c$.
  \end{enumerate}
\end{theorem}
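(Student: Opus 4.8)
The plan is to reduce Theorem~\ref{t:main2} to a shooting problem for a single ordinary differential equation. First I would dispose of the amplitude parameter using the scaling in part~(4): one checks by direct substitution that if $Q$ solves the profile equation with $Q(0)=1$ and speed $c$, then $\bar{Q}(r)=\bar{q}_0\,Q(\bar{q}_0^{-n/2}r)$ solves it with $\bar{c}=\bar{q}_0^{\,n-1}c$, so it suffices to construct the normalized profile with $Q(0)=1$. To obtain the profile equation I insert the radial ansatz $\phi(\xb,t)=Q\big(|\xb-\bar{c}t\,\mathbf{e}_d|\big)$ into \eqref{magma}; because $\phi$ depends only on $\xb-\bar{c}t\,\mathbf{e}_d$ we have $\phi_t=-\bar{c}\,\partial_{x_d}\phi$, and the nonlocal term collapses under the radial identities ($\partial_{x_d}Q=Q_r\,x_d/r$, $\nabla Q\cdot\nabla\partial_{x_d}Q = Q_rQ_{rr}x_d/r$, etc.) to the third-order ODE
\[
cQ^{n}\Big(Q_{rrr}+\tfrac{d-1}{r}Q_{rr}-\tfrac{d-1}{r^{2}}Q_r\Big)+cnQ^{n-1}Q_rQ_{rr}+\big(nQ^{n-1}-c\big)Q_r=0 .
\]

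The key structural observation is a monotone functional playing the role the first integral plays in one dimension. Setting $E(r)=Q^{n}-cQ+cQ^{n}\big(Q_{rr}+\tfrac{d-1}{r}Q_r\big)$, a one-line rearrangement of the ODE gives $E'(r)=cn(d-1)Q^{n-1}Q_r^{2}/r\ge 0$ along any positive solution. In $d=1$ the remainder vanishes and $E$ is the exact first integral of \cite{Nakayama:1992vi}; for $d\ge2$ the sign-definite remainder is precisely the obstruction to integrability, but its monotonicity is exactly the substitute for a conserved quantity that I will exploit. Next I would develop the local theory at the origin: the ODE has a regular singular point at $r=0$, and imposing $Q(0)=1$, $Q_r(0)=0$ (forced by radial smoothness) leaves the single free parameter $\beta=Q_{rr}(0)$. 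I would establish existence, uniqueness, and continuous (indeed analytic) dependence of a local solution on $(\beta,c)$, most naturally by twice integrating with the radial factor $r^{d-1}$ and solving the resulting integral equation by a contraction on a weighted space.

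With these in hand, the heart of the argument is a shooting argument in $\beta$ for each fixed $c\in[1.55,n)$, $n\in[2,3]$. Linearizing the ODE about a constant state $Q\equiv a$ (dropping the $1/r$ terms in the far field) shows that $a$ can be a limit only if $0<a<(c/n)^{1/(n-1)}<1$, in which regime the nonzero characteristic roots are $\pm\sqrt{(c-na^{n-1})/(ca^{n})}$ and the constant state carries a one-dimensional stable manifold. I would then partition the admissible $\beta$-axis into the set whose trajectory decreases monotonically into the degeneracy $Q\to0$ and the set whose derivative $Q_r$ vanishes at finite $r$ (the profile turns around before settling), show each is open and nonempty, and invoke connectedness to extract a critical $\beta^{\ast}$ whose trajectory remains strictly positive, is strictly decreasing, and lands on the stable manifold of an admissible $a\in(0,1)$. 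Monotonicity of $E$ together with the sign of $f(a)=a^{n}-ca$ then pins the limit into $(0,1)$ and forces $Q_{rr}(0)=\beta^{\ast}<0$ and $Q_r<0$ for $r>0$, giving properties (1)--(3); property~(4) follows by reversing the scaling reduction.

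The \textbf{main obstacle} is the global control of the shot, and this is where the hypothesis $c\ge1.55$, uniform over $n\in[2,3]$, must enter. The delicate points are ruling out blow-up or oscillation of the third-order trajectory, maintaining positivity $Q>0$ so that the equation never degenerates, and showing the two shooting regimes genuinely exhaust a neighborhood so that the critical $\beta^{\ast}$ selects the stable manifold rather than a spurious connection. I expect the quantitative threshold $1.55$ to emerge precisely from the a priori estimate—leveraging the monotonicity of $E$ and the sign structure of $f$—that guarantees the trajectory cannot escape the region $0<Q<1$, $Q_r<0$ before reaching an admissible rest state, and I would organize the global step around that estimate.
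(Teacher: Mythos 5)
Your skeleton matches the paper's: the same rescaling to the normalized IVP $Q(0)=1$, $Q_r(0)=0$, $Q_{rr}(0)=\mu<0$, and a shooting argument in $\mu$. Your monotone functional is also a correct (and genuinely nice) observation: one can check that along solutions of \eqref{ODE}, $E(r)=Q^n-cQ+cQ^n\bigl(Q_{rr}+\tfrac{d-1}{r}Q_r\bigr)$ satisfies $E'=cn(d-1)Q^{n-1}Q_r^2/r\ge 0$. However, the global part of your argument has two gaps that would make it fail as written. First, your dichotomy does not consist of open sets, and nonemptiness is asserted rather than proved. ``Decreases monotonically into the degeneracy $Q\to 0$'' is not an open condition: a trajectory may approach $0$ only asymptotically as $r\to\infty$, and nearby trajectories need not do the same. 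The paper resolves exactly this by introducing a strictly positive threshold $Q_*$ (the unique zero of an explicit function $h_3$, obtained by dividing the ODE by $Q^n$ and integrating twice); its ``low'' set $A$ consists of those $\mu$ for which the solution crosses $Q_*$ at finite $r$ while still strictly decreasing, which is open by continuous dependence. Nonemptiness of $A$ and the bound $\sup A<0$ are then extracted from two differential inequalities (through functions $F_2$, $F_3$, affine in $\mu$) for which you have no counterpart; even the intuitive statement ``very negative $\mu$ drives $Q$ below any threshold'' requires ruling out an early turnaround, which is what the $F_3$ inequality does. Moreover, the paper never needs your second set (turnaround at finite $r$) to be nonempty: it takes $\mu_c=\sup A$ and excludes turnaround at $\mu_c$ by a three-case analysis — a perturbation argument off the sup property, uniqueness (which kills the tangency $Q_r(\tau)=Q_{rr}(\tau)=0$), and the endpoint case $Q(\tau)=Q_*$. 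That endpoint case is where $c\ge 1.55$ actually enters: it guarantees $g_3(Q_*)<0$ for all $n\in[2,3]$, not (as you guess) a confinement estimate keeping the trajectory in $0<Q<1$.

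Second, and more fundamentally, your proposed mechanism for conclusion (3) — the lower bound $\lim_{r\to\infty}Q>0$ — cannot work. For a decreasing trajectory with limit $Q_\tau$ (along which $Q_r,Q_{rr}\to 0$), monotonicity of $E$ gives $f(Q_\tau):=Q_\tau^n-cQ_\tau\ge E(0)=1-c+cd\,\beta^*$. Since $\beta^*<0$ and $c>1$, the right-hand side is $<1-c<0=f(0)$, so this inequality is perfectly consistent with $Q_\tau=0$: it can exclude at most a band of limits near $(c/n)^{1/(n-1)}$, where $f$ is most negative, and never limits near zero. Thus ``monotonicity of $E$ together with the sign of $f$'' cannot pin the limit into $(0,1)$, yet that positivity is the heart of the theorem. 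In the paper this bound is automatic from the construction: $A$ is open, so $\mu_c=\sup A\notin A$, hence the critical solution never crosses $Q_*$ and its limit is $\ge Q_*>0$. Finally, your shooting target — landing on the one-dimensional stable manifold of a state $a<(c/n)^{1/(n-1)}$ — is both unjustified and unnecessary: the linearization there has eigenvalues $0$ and $\pm\sqrt{(c-na^{n-1})/(ca^n)}$, so there is a center direction and the $1/r$ terms make the system non-autonomous; and neither your sketch nor the paper proves the limit lies below $(c/n)^{1/(n-1)}$. The theorem asserts only $\lim Q\in(0,1)$ (in fact $\ge Q_*$); the sharper localization appears in the paper only as a sufficient condition for exponential decay, verified in a partly numerical example.
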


\subsection{Outline}
In Section \ref{s:wellposedness}, we prove our local-in-time
well-posedness results.  In Section \ref{s:solitary}, we establish the
existence of the solitary waves.  We conclude with some remarks in
Section \ref{s:discussion}.

\section{Well-posedness}
\label{s:wellposedness}

\subsection{Elliptic Estimates}
We begin by establishing some elliptic estimates for the problem on
$\T^d$ and $\R^d$.  Let
\begin{equation*}
  L_a u:= u - \nabla \cdot (a \nabla u).
\end{equation*}
Throughout, we will assume that $a>0$, either because it is a
continuous function, or because it can be replaced a continuous
version which is positive.  Throughout, we will use $\Pi$ to denote a
polynomial in its arguments, assumed to have positive coefficients.

\subsubsection{Problem on $\T^d$}

First, we recall a standard elliptic regularity result, which can be
found in, for instance, \cite{evans1998pde,gilbarg2001elliptic}.
\begin{theorem}
  \label{evans}
  Suppose that $m \in \N$, $a \in C^{m+1}(\T^d)$, $a>0$,
  $1/a \in L^\infty(\T^d)$, and $g \in H^{m}(\T^d)$.  There exists a
  unique function $u \in H^{m+2}(\T^d)$ such that $L_a u =
  g$. Moreover,
  \begin{equation}\label{evans estimate}
    \| u \|_{H^{m+2}(\T^d)} \le \Pi \left(\|a\|_{C^{m+1}(\T^d)},\|1/a\|_{L^\infty(\T^d)}\right)\|g\|_{H^m(\T^d)}.
  \end{equation}
  $\Pi$ depends only on $m$ and $d$.
\end{theorem}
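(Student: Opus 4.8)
The plan is to obtain existence and uniqueness from the Lax--Milgram theorem and then to derive the higher regularity and the quantitative bound \eqref{evans estimate} by an energy/difference-quotient bootstrap. Set $\delta := 1/\|1/a\|_{L^\infty(\T^d)} > 0$, so that $a \ge \delta$ pointwise, and introduce the bilinear form
\[
  B(u,v) := \int_{\T^d} \LC uv + a\,\nabla u\cdot\nabla v \RC\, d\xb
\]
on $H^1(\T^d)\times H^1(\T^d)$. Since $0 < \delta \le a \le \|a\|_{L^\infty}$, the form $B$ is bounded, with $|B(u,v)| \le \max(1,\|a\|_{L^\infty})\|u\|_{H^1}\|v\|_{H^1}$, and coercive, with $B(u,u) \ge \min(1,\delta)\|u\|_{H^1}^2$. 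For $g \in H^m \subset L^2$ the map $v \mapsto \int gv$ is a bounded linear functional on $H^1$, so Lax--Milgram produces a unique weak solution $u \in H^1(\T^d)$ satisfying $B(u,v) = \int gv$ for all $v$; testing against $u$ itself gives the base estimate $\|u\|_{H^1} \le \max(1,\|1/a\|_{L^\infty})\|g\|_{L^2}$, and uniqueness follows immediately from coercivity applied to the difference of two solutions.

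Next I would bootstrap the regularity. Because $\T^d$ has no boundary, the usual interior estimates are in fact global, and the cleanest rigorous route is the Nirenberg difference-quotient method: substituting a suitable second difference quotient of $u$ into the weak formulation and summing by parts discretely yields a bound on $\|D_k^h u\|_{H^1}$ uniform in the step $h$, whence $\partial_k u \in H^1$ and $u \in H^2$. Formally this corresponds to differentiating $L_a u = g$ to obtain
\[
  L_a(\partial_k u) = \partial_k g + \nabla\cdot\big((\partial_k a)\,\nabla u\big),
\]
testing against $\partial_k u$, and using Young's inequality to absorb the top-order term $\int (\partial_k a)\,\nabla u\cdot\nabla\partial_k u$ into the coercive term $\delta\int|\nabla\partial_k u|^2$; each such absorption costs a factor polynomial in $\|a\|_{C^1}$ and $\|1/a\|_{L^\infty}$. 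Iterating, for a multi-index $\alpha$ with $|\alpha| \le m$ the Leibniz rule gives
\[
  L_a(\partial^\alpha u) = \partial^\alpha g + \sum_{0 \ne \beta \le \alpha} \binom{\alpha}{\beta}\nabla\cdot\big((\partial^\beta a)\,\nabla \partial^{\alpha-\beta} u\big),
\]
where the commutator terms involve at most $m+1$ derivatives of $a$ and derivatives of $u$ of order $\le m+1$, all controlled by the inductive hypothesis. Testing against $\partial^\alpha u$, absorbing the single top-order contribution, and summing over $|\alpha| \le m$ closes the induction and produces \eqref{evans estimate} with $\Pi$ polynomial in $\|a\|_{C^{m+1}}$ and $\|1/a\|_{L^\infty}$ and depending only on $m$ and $d$.

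The main obstacle is bookkeeping rather than conceptual. First, the differentiated identities above are only formal until $u \in H^{m+2}$ is known, so at each stage the argument must actually be carried out with difference quotients (or a mollification) and the limit taken, which is routine but necessary to make the a priori energy estimates rigorous. Second, one must verify that the commutator terms genuinely close at the claimed regularity: the worst term at level $|\alpha| = m$ is $\nabla\cdot\big((\partial^\beta a)\,\nabla\partial^{\alpha-\beta}u\big)$ with $|\beta| = 1$, which contains a derivative of $u$ of order $m+1$ and is precisely the term that must be absorbed via Young's inequality, while every remaining term carries at most $m$ derivatives on $u$ and is handled by the previous step. Ensuring that each such absorption contributes only a bounded power of $\|1/a\|_{L^\infty}$, so that $\Pi$ stays polynomial in the two listed quantities, is the sole quantitative point requiring attention.
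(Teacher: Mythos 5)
You should know at the outset that the paper does not prove Theorem \ref{evans} at all: it is recalled as a standard elliptic regularity result with citations to Evans and Gilbarg--Trudinger. Your proposal (Lax--Milgram for existence and uniqueness of the weak solution, then a difference-quotient bootstrap for regularity, with interior estimates becoming global because $\T^d$ has no boundary) is precisely the argument those references give, so you have supplied, in outline, the proof the paper delegates to the literature; the coercivity constant $\delta = 1/\|1/a\|_{L^\infty}$ and the Young-inequality absorptions do keep the final constant polynomial in $\|a\|_{C^{m+1}}$ and $\|1/a\|_{L^\infty}$, as required.

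There is, however, one concrete slip in your induction that must be fixed: the derivative count. Testing the $\partial^\alpha$-differentiated equation against $\partial^\alpha u$ for $|\alpha| \le m$ controls only $\sum_{|\alpha|\le m}\|\nabla \partial^\alpha u\|_{L^2}$, i.e. the $H^{m+1}$ norm of $u$ --- one derivative short of \eqref{evans estimate}. The correct range is $|\alpha| \le m+1$ (equivalently: apply your $H^2$ difference-quotient lemma to $w = \partial^\alpha u$ for each $|\alpha|\le m$, testing against $-D_j^{-h}D_j^h w$). At the top level $|\alpha| = m+1$ the source term $\int \partial^\alpha g\, \partial^\alpha u$ is then no longer defined pointwise, since $g \in H^m$ only; it must be handled by one integration by parts, bounding it by $\|g\|_{H^m}\|u\|_{H^{m+2}}$ and absorbing via Young. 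Your own text betrays the inconsistency: the base case you describe differentiates the equation once (i.e. $|\alpha| = 1 = m+1$ when $m=0$) to get $u \in H^2$ from $g \in L^2$, and your statement that the commutators involve ``at most $m+1$ derivatives of $a$'' is only true when $|\alpha|$ runs up to $m+1$. So this is a correctable off-by-one in bookkeeping rather than a flaw in the method. Alternatively, you can stop the energy argument at $H^{m+1}$ and recover the final derivative algebraically from the equation: $\Delta u = a^{-1}\left(u - g - \nabla a \cdot \nabla u\right) \in H^m$, which on the torus forces $u \in H^{m+2}$ with the desired bound.
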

\label{SET}

Also recall the Sobolev embedding theorem:
\begin{theorem} Let $s > d/2$ and $s \in \N$. There exists
  $C = C(s,d) >0$ such that for all $u \in H^s(\T^d)$ one has
  \begin{equation*}
    \| u \|_{C^{s - \lfloor d/2 \rfloor - 1}(\T^d)} \le C \| u
    \|_{H^s (\T^d)}.
  \end{equation*}
\end{theorem}

This has the following consequence:
\begin{cor}\label{one over}
  Suppose that $s > d$ and $s \in \N$. If $f \in H^s(\T^d)$ and
  $1/f \in L^\infty(\T^d)$, then $1/f \in H^s(\T^d)$ and
$$
\left\| {1 /f} \right\|_{H^s(\T^d)} \le \Pi\left( \|f \|_{H^s(\T^d)},
  \left\| {1/f} \right\|_{L^\infty(\T^d)} \right).
$$
$\Pi$ depends only on $s$ and $d$.
\end{cor}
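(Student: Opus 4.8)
The plan is to reduce the claim to an estimate on the pure derivatives $\partial^\alpha(1/f)$ for $1\le|\alpha|\le s$, and to control each such term by a finite product of Sobolev norms of $f$ and powers of $\|1/f\|_{L^\infty(\T^d)}$. First I would observe that, since $s>d/2$, the Sobolev embedding \eqref{e:sobolevCHs1} makes $f$ continuous, so the hypotheses $f>0$ and $1/f\in L^\infty(\T^d)$ give the pointwise lower bound $f(\xb)\ge\delta:=\|1/f\|_{L^\infty(\T^d)}^{-1}>0$, while the same embedding gives $\|f\|_{L^\infty(\T^d)}\le C\|f\|_{H^s(\T^d)}$. On the torus the zeroth-order bound is immediate, $\|1/f\|_{L^2(\T^d)}\le C\|1/f\|_{L^\infty(\T^d)}$, so all the work lies in the top-order derivatives.

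Next, for a multi-index $\alpha$ with $1\le|\alpha|\le s$, I would expand $\partial^\alpha(1/f)$ by the Fa\`a di Bruno (generalized quotient) rule as a finite linear combination, with combinatorial coefficients depending only on $\alpha$, of terms of the form $f^{-(k+1)}\prod_{i=1}^{k}\partial^{\alpha_i}f$, where $\alpha_1+\cdots+\alpha_k=\alpha$, each $|\alpha_i|\ge1$, and $1\le k\le|\alpha|$. The scalar prefactor is harmless, since $\|f^{-(k+1)}\|_{L^\infty(\T^d)}\le\|1/f\|_{L^\infty(\T^d)}^{k+1}$.

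The heart of the argument is bounding $\|\prod_{i=1}^k\partial^{\alpha_i}f\|_{L^2(\T^d)}$, and this is where I expect the main (though essentially routine) difficulty. I would apply the Gagliardo--Nirenberg interpolation inequality to each factor in the form $\|\partial^{\alpha_i}f\|_{L^{2s/|\alpha_i|}(\T^d)}\le C\|f\|_{H^s(\T^d)}^{|\alpha_i|/s}\|f\|_{L^\infty(\T^d)}^{1-|\alpha_i|/s}$, which is valid because $1\le|\alpha_i|\le s$. The H\"older exponents $2s/|\alpha_i|$ are summable in exactly the right way: since $\sum_i|\alpha_i|=|\alpha|\le s$, one has $\sum_i|\alpha_i|/(2s)\le1/2$, so H\"older places the product in $L^{p}(\T^d)$ with $p\ge2$, and finiteness of the measure of $\T^d$ gives the embedding $L^{p}(\T^d)\hookrightarrow L^2(\T^d)$. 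Collecting exponents then yields $\|\prod_i\partial^{\alpha_i}f\|_{L^2(\T^d)}\le C\|f\|_{H^s(\T^d)}^{k}$ after absorbing the $L^\infty$ factors of $f$ into $\|f\|_{H^s(\T^d)}$.

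Finally I would combine these estimates: each term contributes at most $C\|1/f\|_{L^\infty(\T^d)}^{k+1}\|f\|_{H^s(\T^d)}^{k}$ with $k\le s$, and summing the finitely many such terms over all $|\alpha|\le s$, together with the zeroth-order bound, produces a bound of the asserted form $\Pi(\|f\|_{H^s(\T^d)},\|1/f\|_{L^\infty(\T^d)})$, a polynomial with positive coefficients depending only on $s$ and $d$. As a consistency check, a shorter induction on $s$ using the algebra property of $H^{s}(\T^d)$ (valid since $s>d/2$) applied to the identity $\nabla(1/f)=-(1/f)^2\nabla f$ handles the inductive step, reducing matters to the least integer exceeding $d/2$, which is itself dispatched by the direct estimate above; I would nonetheless prefer the direct expansion because it treats all $s$ uniformly and makes the polynomial dependence on the two quantities transparent.
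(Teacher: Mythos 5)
Your proposal is correct. Note that the paper itself gives no proof of this corollary: it is stated as an immediate ``consequence'' of the Sobolev embedding \eqref{e:sobolevCHs1}, with the details left to the reader, so there is nothing to conflict with. Your argument --- the pointwise lower bound $|f|\ge \|1/f\|_{L^\infty(\T^d)}^{-1}$ via continuity, the Fa\`a di Bruno expansion of $\partial^\alpha(1/f)$ into terms $f^{-(k+1)}\prod_i \partial^{\alpha_i}f$, and the Gagliardo--Nirenberg/H\"older estimate $\|\partial^{\alpha_i}f\|_{L^{2s/|\alpha_i|}(\T^d)}\le C\|f\|_{H^s(\T^d)}^{|\alpha_i|/s}\|f\|_{L^\infty(\T^d)}^{1-|\alpha_i|/s}$ --- is exactly the standard Moser-type argument such an omission presupposes, and it correctly produces the polynomial dependence on $\|f\|_{H^s(\T^d)}$ and $\|1/f\|_{L^\infty(\T^d)}$; the only cosmetic point is that the corollary's statement does not literally include $f>0$, but your proof needs only $|f|$ bounded below, which follows from $1/f\in L^\infty(\T^d)$ and continuity.
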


\begin{proof}
This follows from the previous theorem and the chain
rule; upon differentiating $1/f$ as many as $s$ times,
one arrives at a sum of a number of terms which involve
in the numerator products of up to $s$ derivatives of 
$f,$ with factors of $f$ in the denominator.  The factors
of $f$ in the denominator are bounded in $L^{\infty}$ 
since we have assumed $1/f\in L^{\infty}.$  The products
in the numerator are bounded in $L^{2}$ since in any such product,
only one factor may have more than $s/2$ derivatives, and thus
only one factor might fail to be in $L^{\infty};$ this factor,
then, may be estimated in $L^{2}$ while the remaining factors
may be estimated using Sobolev embedding.
\end{proof}

We next extend \eqref{evans estimate} to obtain an elliptic regularity
result essential to this work,
\begin{proposition}
  \label{boot}
  Let $s > {d / 2} + \lfloor d/2 \rfloor + 1$ and $s \in \N$. Suppose
  that $a \in H^s(\T^d)$, $a>0$, $1/a \in L^\infty(\T^d)$ and
  $g \in H^{s-1}(\T^d)$.  Then there exists a unique function
  $u \in H^{s+1}(\T^d)$ such that $L_a u = g$. Moreover,
  \begin{equation}\label{gen estimate}
    \| u \|_{H^{s+1}(\T^d)} \le \Pi \left(\|a\|_{H^s(\T^d)},\|1/a\|_{L^\infty(\T^d)}\right)\|g\|_{H^{s-1}(\T^d)}.
  \end{equation}
  $\Pi$ depends only on $s$ and $d$.
\end{proposition}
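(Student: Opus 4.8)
The plan is to upgrade the classical elliptic estimate of Theorem~\ref{evans}, which requires the classical regularity $a \in C^{m+1}$, to one that uses only the Sobolev regularity $a \in H^s$. The mechanism is a bootstrap in which the base regularity is supplied by Theorem~\ref{evans} and each additional derivative is gained through an energy estimate for $L_a$ together with a commutator (Kato--Ponce/Moser) estimate. The coercive structure of $L_a$, coming from the $+u$ term, is what drives every energy step.

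For the base case, I would feed Theorem~\ref{evans} the coefficient regularity coming from Sobolev embedding \eqref{e:sobolevCHs1}: since $a \in H^s$ we have $a \in C^{s - \lfloor d/2\rfloor - 1}(\T^d)$, so taking $m = s - \lfloor d/2\rfloor - 2$ (which is $\ge 0$ under the hypothesis on $s$) produces a unique $u \in H^{s - \lfloor d/2\rfloor}(\T^d)$ solving $L_a u = g$, with
\[ \|u\|_{H^{s-\lfloor d/2\rfloor}} \le \Pi(\|a\|_{H^s}, \|1/a\|_{L^\infty})\|g\|_{H^{s-1}}, \]
after bounding $\|a\|_{C^{s-\lfloor d/2\rfloor-1}} \lesssim \|a\|_{H^s}$ and $\|g\|_{H^{s-\lfloor d/2\rfloor-2}} \le \|g\|_{H^{s-1}}$. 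The crucial consequence of the exact hypothesis $s > d/2 + \lfloor d/2\rfloor + 1$ is that the base space satisfies $s - \lfloor d/2\rfloor > d/2 + 1$, so Sobolev embedding gives $\nabla u \in L^\infty$ with $\|\nabla u\|_{L^\infty} \lesssim \|u\|_{H^{s-\lfloor d/2\rfloor}}$; this $L^\infty$-gradient bound is precisely the ingredient the commutator estimate below will require. Uniqueness is immediate from the coercive pairing $\langle L_a u, u\rangle = \|u\|_{L^2}^2 + \int_{\T^d} a|\nabla u|^2$.

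Next comes the bootstrap, gaining one derivative at a time for $\ell = s - \lfloor d/2\rfloor, \dots, s$. Applying $\partial^\alpha$ with $|\alpha| = \ell$ to $L_a u = g$ and commuting gives $L_a(\partial^\alpha u) = \partial^\alpha g + \nabla\cdot([\partial^\alpha, a]\nabla u)$. Pairing with $\partial^\alpha u$ and integrating by parts yields a coercive identity whose left side dominates $\|\partial^\alpha u\|_{L^2}^2 + \|1/a\|_{L^\infty}^{-1}\|\nabla\partial^\alpha u\|_{L^2}^2$, while the right side is $\langle\partial^\alpha g, \partial^\alpha u\rangle - \langle[\partial^\alpha, a]\nabla u, \nabla\partial^\alpha u\rangle$. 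I would move one derivative off $g$ so the forcing term is bounded by $\|g\|_{H^{s-1}}\|\nabla\partial^\alpha u\|_{L^2}$ (always legitimate since $\ell - 1 \le s-1$), and estimate the commutator with the Moser bound $\|[\partial^\alpha, a]\nabla u\|_{L^2} \lesssim \|\nabla a\|_{L^\infty}\|u\|_{H^\ell} + \|a\|_{H^\ell}\|\nabla u\|_{L^\infty}$. A Young's-inequality absorption of the $\|\nabla\partial^\alpha u\|_{L^2}$ factors into the coercive term, followed by summing over $|\alpha| = \ell$, produces $\|u\|_{H^{\ell+1}} \le \Pi(\|a\|_{H^s}, \|1/a\|_{L^\infty})(\|g\|_{H^{s-1}} + \|u\|_{H^\ell})$; closing with the estimate from the previous stage collapses the right side to $\Pi(\|a\|_{H^s},\|1/a\|_{L^\infty})\|g\|_{H^{s-1}}$, and iterating up to $\ell + 1 = s + 1$ gives \eqref{gen estimate}.

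The main obstacle I anticipate is the top-order commutator term $\|a\|_{H^\ell}\|\nabla u\|_{L^\infty}$ with $\ell$ up to $s$: it cannot be absorbed and must be controlled by genuinely lower-order quantities, which is possible exactly because $\nabla u \in L^\infty$ is furnished by the Evans base regularity under the stated threshold on $s$ --- this is where the somewhat unusual condition $s > d/2 + \lfloor d/2\rfloor + 1$ is forced. A secondary technical point is that the energy identities above are, strictly, only a priori estimates; to turn them into an honest regularity gain $H^\ell \to H^{\ell+1}$ I would rerun the same computation with difference quotients in place of $\partial^\alpha$, obtain bounds uniform in the step size, and pass to the limit by weak compactness, which is routine once the commutator bounds are in hand.
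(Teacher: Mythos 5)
Your proof is correct, but your bootstrap mechanism is genuinely different from the paper's. The base case is identical: both you and the paper use Sobolev embedding to get $a \in C^{s - \lfloor d/2 \rfloor - 1}(\T^d)$, feed Theorem~\ref{evans} the value $m = s - \lfloor d/2 \rfloor - 2$, and obtain a unique solution $u \in H^{s - \lfloor d/2 \rfloor}(\T^d)$ with the corresponding estimate. From there, however, the paper never differentiates the equation: it divides by $a$ and rewrites $L_a u = g$ as
\begin{equation*}
  u - \Delta u = \left(1 - \tfrac{1}{a}\right) u + \tfrac{1}{a} \nabla a \cdot \nabla u + \tfrac{1}{a} g,
\end{equation*}
then iterates the observation that if $u \in H^{k+1}$ with $k > d/2$, the right-hand side lies in $H^{k \wedge (s-1)}$ (by the Sobolev algebra property together with Corollary~\ref{one over}, which supplies $1/a \in H^s$), so that inverting the constant-coefficient operator $I - \Delta$ gives $u \in H^{(k \wedge (s-1)) + 2}$; the bound \eqref{gen estimate} follows by taking $H^k$ norms of the same identity and inducting. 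You instead keep the divergence form, apply $\partial^\alpha$, and run a coercivity/energy argument with the Kato--Ponce commutator bound, using $\nabla u \in L^\infty$ (furnished by the base case, exactly as you note) to control the top-order term $\|a\|_{H^\ell} \|\nabla u\|_{L^\infty}$. Both routes close. The paper's version buys two conveniences: it needs no commutator estimate (only products in Sobolev algebras), and it needs no difference-quotient or mollification step --- since $u = (I - \Delta)^{-1}(\mathrm{RHS})$ holds exactly in $L^2$, each regularity gain is automatic rather than merely a priori, so the issue you correctly flag at the end of your proposal simply never arises. Your version, in exchange, avoids invoking Sobolev control of $1/a$ at top order (you only need $\|1/a\|_{L^\infty}$, $\|\nabla a\|_{L^\infty}$, and $\|a\|_{H^s}$), and energy arguments of this type extend more readily to settings lacking a clean constant-coefficient comparison operator; the price is the commutator machinery plus the difference-quotient rigorization, which is routine but is genuine additional work.
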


\begin{proof} First, by Theorem \ref{SET} we have
  $a \in C^{s- \lfloor d/2 \rfloor - 1}(\T^d)$. Let
  $m := {s- \lfloor d/2 \rfloor - 2}$. The conditions on $s$ guarantee
  that $0\leq m \leq s-1$.  We therefore find $a \in C^{m+1}(\T^d)$
  and $g \in H^m(\T^d)$. Theorem \ref{evans} implies that there exists
a  unique $u \in H^{m+2}(\T^d)$ for which $L_a u = g$ and the estimate
  in that theorem holds,
 \begin{equation}\label{Hm+2Estimate}
    \|u\|_{H^{m+2}}\leq \Pi(\|a\|_{C^{m+1}},
    \|1/a\|_{L^\infty})\|g\|_{H^m}\leq \Pi(\|a\|_{H^s},
    \|1/a\|_{L^\infty})\|g\|_{H^{s-1}}.
  \end{equation}

  Since $u \in H^2$, we have a strong solution in the sense that we
  can rewrite $L_au = g$ as
$$
-\Delta u =- \frac{1}{a} u + \frac{1}{a} \nabla a \cdot \nabla u + \frac{1}{a} g,
$$
with equality holding in $L^2$. Adding $u$ to both sides gives
\begin{equation}
  \label{e:ell2}
  u - \Delta u = \left(1-\frac{1}{a}\right) u + \frac{1}{a} \nabla a
  \cdot \nabla u + \frac{1}{a} g.
\end{equation}

Next, we observe that if $u \in H^{k+1}$, for any $k >d/2$, then the
right-hand side of \eqref{e:ell2} is in $H^{k\wedge (s-1)}$ (here, we
use the notation $a\wedge b = \min\{a,b\}$);  this follows
from our constraints on $s$ and the assumed regularity of $a$ and
$g$.  Since $u \in H^{m+2},$ we may take $k$ to satisfy $k = m+1 > d/2.$
  By virtue of elliptic regularity, if $u\in H^{k+1}$ and if
the right-hand side of \eqref{e:ell2} is in $H^{k\wedge (s-1)}$, then in fact $u \in H^{\left(k\wedge(s-1)\right)+2}$.
If $k\wedge(s-1)=s-1,$ then this implies $u\in H^{s+1}.$  Otherwise, we may repeat this process.
This bootstrap process can be continued until we conclude $u \in H^{s}$, at
which point the right hand side is limited to being in $H^{s-1},$ because of the regularity of $a$ and $g.$  A final
application of elliptic regularity gives $u \in H^{s+1}$.

To obtain the bound on $\|u\|_{H^{s+1}}$, we take the $H^k$ norm of
both sides of \eqref{e:ell2}, to get
\begin{equation}
  \label{kest} \|u\|_{H^{k+2}(\T^d)} \le \left\| \left( \frac{1}{a}-1\right)
    u\right\|_{H^k(\T^d)} + \left\| \frac{1}{a} \nabla a \cdot \nabla
    u\right\|_{H^k(\T^d)}+ \left \|\frac{1}{a} g\right\|_{H^k(\T^d)}.
\end{equation}
For $k \in (d/2, s-1]\cap \N$,
\begin{equation*}
  \begin{split}
    \|u\|_{H^{k+2}}&\leq C \left( 1+ \left\| 1/a \right\|_{H^k} +
      \left\| 1/a \right\|_{H^k} \| a
      \|_{H^{k+1}}\right)\left\|u\right\|_{H^{k+1}}+ \left\|
      1/a \right\|_{H^k} \left\|g\right\|_{H^k}\\
    & \leq \underbrace{C \left(1+ \left\| 1/a \right\|_{H^s} + \left\|
          1/a \right\|_{H^s} \| a \|_{H^{s}} \right)}_{\equiv
      C_{a}}\|u\|_{H^{k+1}} + \underbrace{\left\| 1/a \right\|_{H^s}
      \left\|g\right\|_{H^{s-1}}}_{\equiv B_{a,g}}.
  \end{split}
\end{equation*}
Note that in this estimate we have used Corollary \ref{one over},
since $s>d$ and thus $1/a\in H^{s}.$  By induction
and our estimate \eqref{Hm+2Estimate} on the $H^{m+2}$ norms, we have:
\begin{equation*}
  \begin{split}
    \|u\|_{H^{s+1}}&\leq C_a^{\lfloor d/2\rfloor }\|u\|_{H^{m+2}} +
    B_{a,g} \sum_{j=0}^{\lfloor d/2\rfloor }C_a^j\\
    &\leq C_a^{\lfloor d/2\rfloor } \Pi(\|a\|_{H^s},
    \|1/a\|_{L^\infty})\|g\|_{H^{s-1}}+ B_{a,g} \sum_{j=0}^{\lfloor
      d/2\rfloor }C_a^j.
  \end{split}
\end{equation*}

The conclusion, \eqref{gen estimate}, now follows by again applying Corollary \ref{one over}.
\end{proof}

\begin{cor}
  \label{lip}
  Let $s > {d / 2} + \lfloor d/2 \rfloor + 1$ and $s \in \N$.  Suppose
  that $a,b \in H^s(\T^d)$, $1/a,1/b \in L^\infty(\T^d)$ and
  $g \in H^{s-1}(\T^d)$.  Let $u\in H^{s+1}(\T^{d})$ and $v \in H^{s+1}(\T^d)$ be the
  functions whose existence is implied by Proposition \ref{boot} such that
  $L_a u =L_b v = g$.  Then
  \begin{equation}
    \label{lip est} \| u -
    v\|_{H^{s+1}} \le \Pi\left(\|
      a\|_{H^{s}},\|1/a\|_{L^\infty},\|
      b\|_{H^{s}},\|1/b\|_{L^\infty}\right)\|g\|_{H^{s-1}}
    \| a - b\|_{H^{s}}.
  \end{equation}
  $\Pi$ depends only on $s$ and $d$.
\end{cor}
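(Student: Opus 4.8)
The plan is to reduce the difference $w := u - v$ to an elliptic problem of the form already handled by Proposition \ref{boot}, with coefficient $a$ and a right-hand side carrying an explicit factor of $a-b$. Subtracting $L_b v = g$ from $L_a u = g$ gives $(u-v) - \nabla\cdot(a\nabla u - b\nabla v) = 0$, and the algebraic identity
\[
a\nabla u - b\nabla v = a\nabla(u-v) + (a-b)\nabla v
\]
lets me rewrite this as $L_a w = \nabla\cdot\bigl((a-b)\nabla v\bigr)$. This is the key observation: the coefficient on the left is $a$, for which I already control $\|a\|_{H^s}$ and $\|1/a\|_{L^\infty}$, while all of the $a$–$b$ dependence is pushed into the forcing term.

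Next I would verify that this forcing lies in $H^{s-1}(\T^d)$, so that Proposition \ref{boot} applies. Since $s > d/2$, the space $H^s(\T^d)$ is a Banach algebra; because $a-b \in H^s$ and $\nabla v \in H^s$ (as $v \in H^{s+1}$), the product $(a-b)\nabla v$ is in $H^s$, and hence its divergence lies in $H^{s-1}$, with
\[
\bigl\|\nabla\cdot\bigl((a-b)\nabla v\bigr)\bigr\|_{H^{s-1}} \le C\,\|a-b\|_{H^s}\,\|v\|_{H^{s+1}}.
\]

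Applying Proposition \ref{boot} to $L_a w = \nabla\cdot\bigl((a-b)\nabla v\bigr)$ then yields
\[
\|u-v\|_{H^{s+1}} \le \Pi\bigl(\|a\|_{H^s},\|1/a\|_{L^\infty}\bigr)\,C\,\|a-b\|_{H^s}\,\|v\|_{H^{s+1}},
\]
and a second application of the same proposition to $L_b v = g$ bounds $\|v\|_{H^{s+1}} \le \Pi\bigl(\|b\|_{H^s},\|1/b\|_{L^\infty}\bigr)\|g\|_{H^{s-1}}$. Substituting this in and absorbing the product of the two polynomials (together with the constant $C$) into a single polynomial with positive coefficients in the four quantities $\|a\|_{H^s}$, $\|1/a\|_{L^\infty}$, $\|b\|_{H^s}$, $\|1/b\|_{L^\infty}$ gives precisely \eqref{lip est}.

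I do not expect a genuine obstacle here: the whole corollary follows from Proposition \ref{boot} once the difference equation is set up correctly. The only points requiring care are the bookkeeping in the algebra estimate—making sure the single derivative lost to the divergence is compensated by the $H^s$ regularity of the product $(a-b)\nabla v$—and checking that the combination of the two instances of $\Pi$ remains a polynomial of the claimed form, which is immediate since sums and products of polynomials with positive coefficients are again such polynomials.
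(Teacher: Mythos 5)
Your proof is correct and follows essentially the same argument as the paper: the paper sets $\eta := u-v$ and writes $L_b\eta = \nabla\cdot\left((a-b)\nabla u\right)$, then applies Proposition \ref{boot} twice (once for the difference, once to bound $\|u\|_{H^{s+1}}$), exactly mirroring your steps with the roles of $a,u$ and $b,v$ interchanged. Since the final estimate is symmetric in $(a,u)$ and $(b,v)$, this swap is immaterial.
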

\begin{proof}
  Let $\eta := u - v$. A straightforward calculation shows that
  $L_b \eta = \nabla \cdot \left( (a-b) \nabla u \right)=:\tilde{g}$.
  We know that $u \in H^{s+1}(\T^d)$ and $a,b \in H^{s}(\T^d)$ and
  thus $\tilde{g} \in H^{s-1}(\T^d)$. In particular, since the
  condition on $s$ implies that $H^{s}(\T^d)$ is an algebra, we have:
$$
\| \tilde{g} \|_{H^{s-1}} = \| \nabla \cdot \left( (a-b) \nabla u
\right)\|_{H^{s-1}} \le C \| (a-b) \nabla u \|_{H^{s}} \le C\| a- b
\|_{H^s} \| u \|_{H^{s+1}}.
$$
The constant $C>0$ is determined only by $d$.

Applying the estimate from Proposition \ref{boot} shows, therefore,
that
\begin{equation*}
  \begin{split}
    \| \eta \|_{H^{s+1}} &\le
    \Pi\left(\|b\|_{H^s},\|1/b\|_{L^\infty}\right) \|
    \tilde{g}\|_{H^{s-1}} \\
    & \le\Pi\left(\|b\|_{H^s},\|1/b\|_{L^\infty}\right)\| a- b
    \|_{H^s} \| u \|_{H^{s+1}}.
  \end{split}
\end{equation*}
Since we know $L_a u = g$, we apply the estimate from Proposition
\ref{boot} to $\|u\|_{H^{s+1}(\T^d)}$ above. The result is expressed
as \eqref{lip est}.
\end{proof}

Corollary \ref{lip}, of course, directly implies uniqueness
of solutions to the problem $L_{a} u =g.$
We therefore may 
write $L^{-1}_a g$ to mean the unique function $u$ such that
$L_a u = g.$
The results in Proposition \ref{boot} and Corollary \ref{lip} can 
thus be
reformulated as the following pair of estimates which hold when
$s > {d / 2} + \lfloor d/2 \rfloor + 1$:
\begin{equation}\label{boot est} 
\| L^{-1}_a g \|_{H^{s+1}(\T^d)} \le
\Pi\left(\|a\|_{H^s(\T^d)},\|1/a\|_{L^\infty(\T^d)}\right) \|
g\|_{H^{s-1}(\T^d)}
\end{equation}
and
\begin{equation}\label{lip est-2}
  \begin{split}
    &\| [L^{-1}_a - L^{-1}_b] g \|_{H^{s+1}} \\
    &\quad \quad \le \Pi\left(\| a\|_{H^{s}},\|1/a\|_{L^\infty},\|
      b\|_{H^{s}},\|1/b\|_{L^\infty}\right)\|g\|_{H^{s-1}} \| a -
    b\|_{H^{s}}.
  \end{split}
\end{equation}

\subsubsection{Problem on $\R^d$}

Similar elliptic regularity results hold on $\R^{d};$
in \cite{krylov2008lectures}, the following result 
is shown to hold for the
non-divergence form operator $L,$ defined as
\begin{equation*}
  L u = a_{ij}(x) \frac{\partial^2 u}{\partial x_i \partial x_j} +
  b_i(x) \frac{\partial u}{\partial x_i} + c(x) u.
\end{equation*}
\begin{theorem}[See Theorems 9.2.3 and 11.6.2 of \cite{krylov2008lectures}]
  \label{krylov}
  Assume:
  \begin{itemize}
  \item The matrix-valued function $a$ is symmetric, positive
    definite, and  uniformly bounded from above
    and below on $\R^d$.
  \item $a$, $b$, and $c$ are $C^m(\R^d)$, with $C^m$ norms uniformly
    bounded by a constant $K$.
  \item $L 1 \leq -\delta$, for some constant $\delta >0.$  (Here, $L1$ is the operator $L$ applied to the
  constant function $1,$ so $L1=c(x).$)
  \end{itemize}
  Then for any $f \in H^{m}(\R^d)$, there exists a unique
  $u \in H^{m+2}(\R^d)$ such that $Lu=f.$  Furthermore, there is a constant $N$, such
  that for any such $f$, if $Lu=f,$ then
  \begin{equation*}
    \|u\|_{H^{m+2}}\leq N \|f\|_{H^m}.
  \end{equation*}
\end{theorem}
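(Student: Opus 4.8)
Since this is a global elliptic regularity result of a classical type, the plan follows the standard template: first establish the a priori estimate $\|u\|_{H^{m+2}} \le N\|f\|_{H^m}$ for any solution, then deduce uniqueness from it, and finally obtain existence by the method of continuity. Uniqueness is immediate once the estimate is available, since $Lu = 0$ forces $\|u\|_{H^{m+2}} \le N \cdot 0 = 0$; alternatively, because $c = L1 \le -\delta < 0$, the maximum principle for non-divergence operators with strictly negative zeroth-order coefficient already rules out a nontrivial kernel. So the real content is the estimate and the existence argument.

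First I would treat the base case $m = 0$ and prove $\|u\|_{H^2(\R^d)} \le N\|f\|_{L^2(\R^d)}$ for $u \in H^2$ solving $Lu = f$. Low-order control comes from the sign condition: pairing $Lu = f$ against $u$ in $L^2$ and integrating by parts in the leading term produces $\int a_{ij}\partial_i u\,\partial_j u + \int(-c)u^2$, and uniform ellipticity $\lambda|\xi|^2 \le a_{ij}\xi_i\xi_j$ together with $-c \ge \delta$ bounds this below by $\lambda\|\nabla u\|_{L^2}^2 + \delta\|u\|_{L^2}^2$, while the terms involving $\nabla a$ and $b$ (controlled by $K$) are absorbed via Young's inequality. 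It is exactly the strict negativity of $c$ that supplies global coercivity on the unbounded domain, so no Fredholm or compactness argument is needed. To promote this to second derivatives I would rewrite the equation as $a_{ij}\partial_i\partial_j u = f - b_i\partial_i u - cu$ and apply the Calderón--Zygmund estimate: localize with a partition of unity, freeze the leading coefficients at the center of each patch, and use the uniform continuity of $a$ (guaranteed by the uniform $C^m$ bound with $m \ge 1$) to make the oscillation of $a$ small on small patches, so the frozen-coefficient error absorbs into the left side. Summing over patches yields $\|D^2 u\|_{L^2} \le C(\|f\|_{L^2} + \|u\|_{H^1})$, which combines with the coercive bound to give the full $H^2$ estimate.

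Next I would obtain the general case by induction on $m$. Assuming the estimate at order $m-1$, I differentiate $Lu = f$ by a multi-index $\gamma$ with $|\gamma| \le m$ and use the Leibniz rule to write $L(\partial_\gamma u) = \partial_\gamma f - [\partial_\gamma, L]u$. The commutator $[\partial_\gamma, L]u$ contains only derivatives of the coefficients up to order $m$, all bounded by $K$, multiplied by derivatives of $u$ of order at most $m+1$, which are controlled by the inductive hypothesis. Applying the order-zero estimate to each $\partial_\gamma u$ and summing over $|\gamma| \le m$ closes the induction and delivers $\|u\|_{H^{m+2}} \le N\|f\|_{H^m}$ with $N = N(K,\lambda,\delta,m,d)$.

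Finally, existence follows from the method of continuity. I would join $L$ to the reference operator $L_0 = \Delta - \delta$, invertible from $H^{m+2}$ onto $H^m$ via the Fourier transform, through the family $L_t = (1-t)L_0 + tL$ for $t \in [0,1]$. Each $L_t$ inherits the hypotheses with constants uniform in $t$: ellipticity, the $C^m$ bounds, and the sign condition $L_t 1 \le -\delta$ all persist along the homotopy, so the a priori estimate holds uniformly in $t$. A standard functional-analytic argument then shows that the set of $t$ for which $L_t$ is surjective is both open and closed in $[0,1]$, and since it contains $t = 0$ it contains $t = 1$. I expect the main obstacle to be the base-case global second-derivative estimate: with no boundary on $\R^d$ to exploit, the Calderón--Zygmund bound must be patched together from frozen-coefficient localizations while keeping every constant uniform across patches, and it is precisely the condition $c \le -\delta$ that furnishes the global coercivity needed to rule out a kernel and make the whole scheme close.
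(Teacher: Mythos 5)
The first thing to note is that the paper does not prove Theorem \ref{krylov} at all: the bracketed reference to Theorems 9.2.3 and 11.6.2 of \cite{krylov2008lectures} \emph{is} the paper's entire proof, and the paper's own work only begins afterwards, in Corollary \ref{c:Rdevans}. So your proposal is competing with Krylov's textbook proof rather than with anything in the paper, and judged on its own it has a genuine gap, located exactly at the step you yourself single out as the crux.

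The gap is the base-case claim that ``it is exactly the strict negativity of $c$ that supplies global coercivity.'' It does not, under the stated hypotheses. Pairing $Lu=f$ against $-u$ and integrating by parts in the leading term gives
\[
-\int_{\R^d} (Lu)\,u \;=\; \int a_{ij}\,\partial_i u\,\partial_j u \;+\; \int (\partial_j a_{ij})\,u\,\partial_i u \;-\; \int b_i\,u\,\partial_i u \;-\; \int c\,u^2 ,
\]
and the two middle terms are controlled only by $C(d)K\|u\|_{L^2}\|\nabla u\|_{L^2}$. Young's inequality turns this into $\tfrac{\lambda}{2}\|\nabla u\|_{L^2}^2+\tfrac{C^2K^2}{2\lambda}\|u\|_{L^2}^2$, and absorbing the second term into $\delta\|u\|_{L^2}^2$ requires $C^2K^2\le\lambda\delta$ --- a largeness condition on $\delta$ relative to $K$ and the ellipticity constant that is nowhere among the hypotheses ($K$ may be arbitrarily large, $\delta$ arbitrarily small). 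Nor is this a reparable inefficiency in how Young's inequality is applied: the quadratic form of $-L$ genuinely fails to be coercive in this regime. In $d=1$ with $a\equiv1$, $c\equiv-\delta$ one has $-\int(Lu)u=\|u_x\|_{L^2}^2+\tfrac12\int b_x u^2+\delta\|u\|_{L^2}^2$; choosing $b$ decreasing with $b_x\le -M$ on a unit interval (so $K\sim M$) and $u$ a bump supported there makes the form negative once $M>2\pi^2+2\delta$, even though the conclusion of the theorem remains true for such $L$. Since your existence step (method of continuity) consumes precisely this a priori estimate, uniformly along the homotopy, the scheme collapses at its foundation; what your argument actually proves is the weaker statement in which $\delta$ is assumed large compared with $K^2/\lambda$, or equivalently invertibility of $L-\lambda$ for $\lambda$ large. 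Indeed that is how the genuine proof begins: one first establishes $\lambda\|u\|_{L^2}+\sqrt{\lambda}\,\|Du\|_{L^2}+\|D^2u\|_{L^2}\le N\|(L-\lambda)u\|_{L^2}$ for $\lambda\ge\lambda_0(d,K,\text{ellipticity},\text{modulus of continuity of }a)$, where it is largeness of the spectral parameter, not the sign of $c$, that absorbs the lower-order terms after freezing coefficients; the descent from $L-\lambda$ to $L$ under the bare sign condition $L1\le-\delta$ is then a separate, non-formal argument, and that descent is the real content of Krylov's Theorems 9.2.3 and 11.6.2. It is absent from your proposal. (Two smaller points: your maximum-principle uniqueness argument is sound only once $m$ is large enough that $H^{m+2}(\R^d)$ functions are continuous and vanish at infinity, which fails for $m=0$, $d\ge4$ without an extra approximation step; and the coefficient-freezing in your Calder\'on--Zygmund step needs a uniform modulus of continuity of $a$, which a bare $C^0$ bound --- the case $m=0$ of the hypotheses --- does not supply.)
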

Then the analog of Theorem \ref{evans} holds:
\begin{cor}
  \label{c:Rdevans}
  Assume $a \in C^{m+1}(\R^d)$, $1/a\in L^\infty(\R^d)$ and
  $g\in H^{m}(\R^d)$.  Then there exists a unique $u\in H^{m+2}(\R^d)$
  solving $L_au=g$ with
  \begin{equation}
    \label{e:ellipticRd}
    \| u \|_{H^{m+2}(\R^d)} \le \Pi \left(\|a\|_{C^{m+1}(\R^d)},\|1/a\|_{L^\infty(\R^d)}\right)\|g\|_{H^m(\R^d)}.
  \end{equation}
  $\Pi$ depends only on $m$ and $d$.
\end{cor}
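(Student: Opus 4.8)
The plan is to recast the divergence-form equation $L_a u = g$ as a non-divergence-form equation to which Theorem \ref{krylov} applies. Expanding the operator \eqref{e:La} gives
$$L_a u = u - \nabla \cdot (a \nabla u) = -a \Delta u - \nabla a \cdot \nabla u + u.$$
Because Theorem \ref{krylov} requires the zeroth-order coefficient to satisfy $L 1 \le -\delta < 0$, whereas here the coefficient of $u$ is $+1$, I would instead work with the negated operator $\tilde{L} := -L_a$, so that $\tilde{L} u = a \Delta u + \nabla a \cdot \nabla u - u$. In the notation of Theorem \ref{krylov} this corresponds to $a_{ij} = a\,\delta_{ij}$, $b_i = \partial_i a$, and $c \equiv -1$. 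Solving $L_a u = g$ is then equivalent to solving $\tilde{L} u = -g$.

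Next I would verify the three hypotheses of Theorem \ref{krylov} for $\tilde{L}$. The two-sided bound on the principal part follows from the standing assumptions: since $a \in C^{m+1}(\R^d)$ we have the upper bound $a \le \|a\|_{L^\infty(\R^d)} \le \|a\|_{C^{m+1}(\R^d)}$, and since $1/a \in L^\infty(\R^d)$ with $a$ continuous we have the uniform lower bound $a \ge 1/\|1/a\|_{L^\infty(\R^d)} > 0$; hence $a\,\delta_{ij}$ is symmetric, positive definite, and uniformly bounded above and below. The $C^m(\R^d)$ bounds on the coefficients $a$, $b = \nabla a$, and $c \equiv -1$ are all controlled by $\|a\|_{C^{m+1}(\R^d)}$. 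Finally, $\tilde{L} 1 = c = -1$, so the third hypothesis holds with $\delta = 1$.

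With the hypotheses in hand, Theorem \ref{krylov} applied to the datum $-g \in H^m(\R^d)$ yields a unique $u \in H^{m+2}(\R^d)$ with $\tilde{L} u = -g$, equivalently $L_a u = g$, together with the bound $\|u\|_{H^{m+2}(\R^d)} \le N \|g\|_{H^m(\R^d)}$. This establishes existence, uniqueness, and the estimate \eqref{e:ellipticRd} up to identifying the constant.

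The remaining, and main, point is to express Krylov's constant $N$ in the required form $\Pi(\|a\|_{C^{m+1}(\R^d)}, \|1/a\|_{L^\infty(\R^d)})$. The constant $N$ in Theorem \ref{krylov} depends only on $m$, $d$, the ellipticity constants, the $C^m$ bound $K$ on the coefficients, and $\delta$; each of these has just been controlled in terms of $m$, $d$, $\|a\|_{C^{m+1}(\R^d)}$, and $\|1/a\|_{L^\infty(\R^d)}$, with $\delta = 1$ fixed. Since $N$ is monotone in these quantities, it can be dominated by a polynomial $\Pi$ with positive coefficients in $\|a\|_{C^{m+1}(\R^d)}$ and $\|1/a\|_{L^\infty(\R^d)}$, with coefficients depending only on $m$ and $d$, yielding \eqref{e:ellipticRd}. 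I expect this bookkeeping of the constant to be the only delicate step, as everything else is a direct translation of the divergence-form problem into the hypotheses of Theorem \ref{krylov}.
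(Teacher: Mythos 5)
Your reduction to Theorem \ref{krylov} is exactly the paper's: you pass to $\tilde{L}=-L_a$, written in non-divergence form with $a_{ij}=a\,\delta_{ij}$, $b_i=a_{x_i}$, $c\equiv -1$, verify uniform ellipticity from $\|a\|_{C^{m+1}(\R^d)}$ and $\|1/a\|_{L^\infty(\R^d)}$, and take $\delta=1$ since $\tilde{L}1=-1$. This part is correct and gives existence and uniqueness of $u\in H^{m+2}(\R^d)$, just as in the paper.

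The gap is in your last step, which you yourself flag as the main point. You argue that since Krylov's constant $N$ is monotone in the ellipticity constants and the $C^m$ bound $K$, it ``can be dominated by a polynomial'' in $\|a\|_{C^{m+1}(\R^d)}$ and $\|1/a\|_{L^\infty(\R^d)}$. That inference is a non sequitur: a monotone function of $K$ need not admit any polynomial bound (consider $e^K$), and Theorem \ref{krylov} as quoted gives no quantitative form for $N$ at all; its proof could a priori yield exponential dependence on the coefficient bounds. Since the corollary specifically asserts the estimate with a polynomial $\Pi$ with positive coefficients (the convention fixed just after \eqref{e:La}), and this polynomial structure is what gets propagated into Proposition \ref{boot2} and the Lipschitz estimates, you cannot close the proof by citing $N$. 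The paper sidesteps this precisely: Krylov's theorem is used only for existence and uniqueness, while the bound \eqref{e:ellipticRd} is obtained ``directly for $m=0$, and by induction for $m=1,2,\ldots$'' --- i.e., for $m=0$ one tests $u-\nabla\cdot(a\nabla u)=g$ against $u$ to control $\|u\|_{L^2}$ and $\|1/a\|_{L^\infty}^{1/2}$-weighted $\|\nabla u\|_{L^2}$, then solves the equation for $\Delta u = a^{-1}\left(u-g-\nabla a\cdot\nabla u\right)$ to get $\|u\|_{H^2}$ with manifestly polynomial constants, and for $m\ge 1$ one differentiates the equation and applies the previous case. Supplying such a direct derivation of the constant (or some explicit quantitative version of Krylov's estimate) is what your proof is missing.
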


\begin{proof}
  We need to switch signs to agree with the formulation of
  \cite{krylov2008lectures}.  Given $L_a$, define $L$ as
  \begin{equation*}
    Lu = -L_au = a(x) \delta_{ij} \partial^2_{x_i x_j}u +
    a_{x_i} \partial_{x_i}u - u.
  \end{equation*}
  Now, the matrix $a$ of \cite{krylov2008lectures} is equal to $a(x) I$, where $a(x)$ is 
  scalar.  Furthermore, $b_i = a_{x_i}$ and $c(x) = -1$.  Taking $\delta = 1$, we have
  $L1 = -1 = -\delta$ for all $x$.  All of the coefficients are $C^m$,
  so there is a unique solution to $Lu = -g$, hence there is a unique
  solution to $L_a u = g$ in $H^m$.  The bound \eqref{e:ellipticRd}
  can be obtained directly for $m=0$, and by induction for
  $m=1,2,\ldots$.
\end{proof}

We will also make use of the Sobolev embedding theorem:
\begin{theorem} Let $s > d/2$ and $s \in \N$. Then there exists
  $C = C(s,d) >0$ such that for all $u \in H^s(\R^{d})$, one has
  \begin{equation*}
    \| u \|_{C^{s - \lfloor d/2 \rfloor - 1}(\R^d)} \le C \| u
    \|_{H^s (\R^d)}.
  \end{equation*}
\end{theorem}
See Theorem 10 and Remark 12 of Section 10.4 of
\cite{krylov2008lectures} for details.

On $\R^d$, we will assume that our function, $a$, is asymptotic to a
constant as $|x|\rightarrow\infty;$ for convenience, we take this constant to equal one.  This shift in
the far field is the main difference between the $\R^d$ and $\T^d$
problems.  We use the notation $f\in H^{j}(\R^{d})+1$ to indicate $(f-1)\in H^{j}(\R^{d}).$  We have the following result:
\begin{cor}
  Suppose that $s > d$ and $s \in \N$. If $f \in H^s(\R^d) + 1$ and
  $1/f \in L^\infty(\R^d)$, then $1/f \in H^s(\R^d) + 1$ and
  \begin{equation*}
    \left\| {1 /f}-1 \right\|_{H^s} \le \Pi\left( \|f-1 \|_{H^s},
      \left\| {1/f} \right\|_{L^\infty} \right).
  \end{equation*}
  $\Pi$ depends only on $s$ and $d$.
\end{cor}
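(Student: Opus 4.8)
The plan is to mirror the proof of Corollary \ref{one over}, with the single modification that we must account for the nonzero constant at infinity; the key is to phrase everything in terms of $f-1$ and $1/f-1$, both of which decay at spatial infinity. The starting point is the algebraic identity
\[
  \frac{1}{f} - 1 = -\,\frac{f-1}{f} = -(f-1)\cdot\frac{1}{f},
\]
together with the pointwise lower bound $f \ge \delta := 1/\|1/f\|_{L^\infty(\R^d)} > 0$, which follows immediately from the hypotheses $1/f \in L^\infty$ and $f>0$. Writing $w := 1/f - 1$, I must show $w \in H^s(\R^d)$ with the stated polynomial bound.

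First I would treat the $L^2$ contribution to $\|w\|_{H^s}$ directly from the identity above,
\[
  \|w\|_{L^2(\R^d)} = \left\|\frac{f-1}{f}\right\|_{L^2} \le \|1/f\|_{L^\infty}\,\|f-1\|_{L^2},
\]
which is exactly where the $\R^d$ argument diverges from the $\T^d$ one: on $\R^d$ constants are not square-integrable, so the appearance of the factor $f-1$ (rather than $f$) is essential for $L^2$ control at infinity. For the derivatives, I would note that for any multi-index $\alpha$ with $1 \le |\alpha| \le s$ one has $\partial^\alpha w = \partial^\alpha(1/f)$, and by the Fa\`{a} di Bruno chain-rule expansion $\partial^\alpha(1/f)$ is a finite sum of terms of the form $f^{-(p+1)}\prod_{i=1}^{p}\partial^{\gamma_i} f$, with $1 \le p \le |\alpha|$, each $|\gamma_i|\ge 1$, and $\sum_i \gamma_i = \alpha$. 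Because every $|\gamma_i|\ge 1$, each factor is a derivative of $f-1$, so again only the decaying part of $f$ enters.

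It then remains to bound each such term in $L^2$. The prefactor satisfies $\|f^{-(p+1)}\|_{L^\infty} \le \|1/f\|_{L^\infty}^{\,p+1}$, and the product $\prod_{i=1}^p \partial^{\gamma_i}(f-1)$ is estimated by the standard Gagliardo--Nirenberg product inequalities on $\R^d$ (which, since $s > d/2$, play the role that the algebra property of $H^s$ played on the torus): keeping the highest-order factor in $L^2$ and placing the remaining factors in $L^\infty$ by interpolation against $\|f-1\|_{H^s}$ yields a bound of the form $C\|f-1\|_{H^s}^{\,p}$. Summing the finitely many terms over the decomposition produces a polynomial in $\|f-1\|_{H^s}$ and $\|1/f\|_{L^\infty}$ with coefficients depending only on $s$ and $d$, which is the asserted estimate. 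Equivalently, one may write $w = G(f-1)$ with $G(y) = -y/(1+y)$, which is smooth on the range of $f-1$ (a compact subset of $(-1,\infty)$ by the bounds $\delta-1 \le f-1 \le \|f-1\|_{L^\infty} \le C\|f-1\|_{H^s}$), and invoke a Moser composition estimate with constant controlled by the derivatives of $G$ on that range.

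I expect the only point requiring care---rather than a genuine obstacle---to be the integrability at infinity: one must verify that no term in the expansion of $w$ and its derivatives contains $f$ undifferentiated in a way that would reintroduce a nondecaying constant and destroy $L^2$ membership. This is resolved precisely by the observations that $w = -(f-1)/f$ carries the factor $f-1$ and that every differentiated factor $\partial^{\gamma_i} f$ with $|\gamma_i|\ge 1$ equals $\partial^{\gamma_i}(f-1)$; once this is in hand, the estimates are identical in form to the $\T^d$ case.
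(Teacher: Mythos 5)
Your proof is correct; since the paper states this corollary (like its $\T^d$ counterpart, Corollary \ref{one over}) without proof, as a direct consequence of the Sobolev embedding theorem, your argument --- the identity $1/f - 1 = -(f-1)/f$, the Fa\`a di Bruno expansion in which every differentiated factor $\partial^{\gamma_i} f = \partial^{\gamma_i}(f-1)$ decays at infinity, and Moser-type product/composition estimates --- is exactly the standard reasoning the paper leaves implicit, with the bookkeeping of the constant shift at infinity being the one genuinely $\R^d$-specific point, which you handle correctly. The only phrasing to tighten: literally putting the lower-order factors in $L^\infty$ can fail (for $s=2$, $d=3$, a first derivative of $f-1$ lies only in $H^1(\R^3) \not\subset L^\infty(\R^3)$), so one should use the Gagliardo--Nirenberg/H\"older exponents $L^{2|\alpha|/|\gamma_i|}$ --- which is what the Moser product estimate you cite actually does --- or simply rely on your alternative composition argument with $G(y) = -y/(1+y)$, which is airtight as written.
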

\begin{proof}
We start by observing that $\frac{1}{f}-1$ is in $L^{2},$ since
$\frac{1}{f}-1=-\frac{f-1}{f},$ and since $f-1\in L^{2}$ and
$\frac{1}{f}\in L^{\infty}.$  Then, upon differentiating
$\frac{1}{f}-1,$ the proof is the same as in the case on the torus.
\end{proof}

The analog of Proposition \ref{boot} holds on $\R^d$:
\begin{proposition}
  \label{boot2} Suppose that $s > {d / 2} + \lfloor d/2 \rfloor + 1$
  and $s \in \N$. Suppose that $a\in H^s(\R^d)+1$,
  $1/a \in L^\infty(\R^d)$ and $g \in H^{s-1}(\R^d)$.  Then there
  exists a unique function $u \in H^{s+1}(\R^d)$ such that
  $L_a u = g$. Moreover,
  \begin{equation*}
    \| u \|_{H^{s+1}(\R^d)} \le \Pi \left(\|a-1\|_{H^s(\R^d)},\|1/a\|_{L^\infty(\R^d)}\right)\|g\|_{H^{s-1}(\R^d)}.
  \end{equation*}
  $\Pi$ depends only on $s$ and $d$.
\end{proposition}

\begin{proof}
  We omit the full details of the proof, as they are almost exactly
  the same as in the case of $\T^d$.  The substantive difference comes
  when we begin to manipulate the analog of \eqref{kest}, in which we estimate the $H^{k+2}$ norm of $u$ by taking the $H^{k}$ norm of both sides of \eqref{e:ell2}.  In 
  particular, in the non-periodic case, norms of $a$ 
  must be treated differently as compared to the periodic
  case, and we estimate as follows:
  \begin{equation*}
    \begin{split}
      \|u\|_{H^{k+2}} &\leq \| (a^{-1}-1)u \|_{H^k} + \|a^{-1} \nabla a
      \cdot
      \nabla u\|_{H^k} + \|a^{-1} g\|_{H^k}\\
      & \leq C_d( \|a^{-1}-1\|_{H^k} + \|a^{-1} -1\|_{H^k} \|a^{-1}
      -1\|_{H^{k+1}} + \|a^{-1}
      -1\|_{H^{k+1}} ) \|u\|_{H^{k+1}} \\
      &\quad + ( 1 +\|a^{-1} -1\|_{H^k} ) \|g\|_{H^{k}}\\
      & \leq C_d( \|a^{-1}-1\|_{H^s} + \|a^{-1} -1\|_{H^s}^2)
      \|u\|_{H^{k+1}} + ( 1 +\|a^{-1} -1\|_{H^s} ) \|g\|_{H^{s-1}}.
    \end{split}
  \end{equation*}

\end{proof}

An analog of Corollary \ref{lip} holds, with the usual shift by
one:
\begin{cor}
  \label{lip2}
  Suppose that $s > {d / 2} + \lfloor d/2 \rfloor + 1$ and $s \in \N$.
  Suppose that $a,b \in H^s(\R^d)+1$, $1/a,1/b \in L^\infty(\R^d)$ and
  $g \in H^{s-1}(\R^d)$.  Let $u$ and $v \in H^{s+1}(\R^d)$ be the
  functions whose existence is implied by Proposition \ref{boot2} and
  for which $L_a u =L_b v = g$.  Then
  \begin{equation*}
    \| u - v\|_{H^{s+1}} \le \Pi\left(\|
      a-1\|_{H^{s}},\|1/a\|_{L^\infty},\|
      b-1\|_{H^{s}},\|1/b\|_{L^\infty}\right)\|g\|_{H^{s-1}}
    \| a - b\|_{H^{s}}.  \end{equation*}
  $\Pi$ depends only on $s$ and $d$.
\end{cor}
We omit the details of this result.

\subsection{Reformulation of \eqref{magma} as an ODE on a Banach Space}

\subsubsection{Problem on $\T^d$}

Suppose that $s > {d / 2} + \lfloor d/2 \rfloor + 1$.  Set
$$
U := \left\{ f \in H^{s}(\T^d) : \| 1/f \|_{L^\infty(\T^d)} < \infty
  \text{ and } f > 0\right\}.
$$
It is straightforward to show that $U$ is an open subset of
$H^s(\T^d)$.  The condition on $s$ implies that $H^s(\T^d)$ is an
algebra and thus $\phi \in U$ implies $\phi^n \in U$ as well.  This in
turn implies, \textit{via} Proposition \ref{boot}, that
$L_{\phi^{n}}^{-1}$ is a well-defined and bounded map from
$H^{s-1}(\T^d)$ into $H^{s+1}(\T^d)$.  Thus we can rewrite
\eqref{magma} as
$$
\phi_t = - L^{-1}_{\phi^n} \left[ \partial_{x_d} \phi^n \right] =:
N(\phi).
$$

We claim that $N$ is a locally Lipschitz map from $U$ into
$H^{s+1}(\T^d)$ (and therefore into $H^s(\T^d)$). First, we use the
triangle inequality:
\begin{multline}
  \label{e:picard1}
    \| N(\phi) - N(\psi) \|_{H^{s+1}(\T^d)} = \| L^{-1}_{\phi^n} \partial_{x_d} \phi^n - L^{-1}_{\psi^n} \partial_{x_d} \psi^n\|_{H^{s+1}(\T^d)}\\
     \le \| L^{-1}_{\phi^n} \partial_{x_d} \phi^n -
    L^{-1}_{\psi^n} \partial_{x_d} \phi^n\|_{H^{s+1}(\T^d)}
    \quad + \| L^{-1}_{\psi^n} \partial_{x_d} \phi^n -
    L^{-1}_{\psi^n} \partial_{x_d} \psi^n\|_{H^{s+1}(\T^d)}.
\end{multline}
Applying inequality \eqref{lip est-2} to the first term on the right-hand side of
\eqref{e:picard1} gives:
\begin{equation*}
  \begin{split}
    &\| L^{-1}_{\phi^n} \partial_{x_d} \phi^n -
    L^{-1}_{\psi^n} \partial_{x_d} \phi^n\|_{H^{s+1}} \\
    &\le \Pi\left( \| \phi^n \|_{H^s},\| \phi^{-n} \|_{L^\infty}, \|
      \psi^n \|_{H^s},\| \psi^{-n} \|_{L^\infty}
    \right)\| \partial_{x_d} \phi^n \|_{H^{s-1}} \| \phi^n -
    \psi^n\|_{H^{s}}.
  \end{split}
\end{equation*}
Since
$\| \partial_{x_d} \phi^n \|_{H^{s-1}(\T^d)} \le C \| \phi
\|_{H^{s}(\T^d)}^n$, we have:
\begin{equation*}
    \| L^{-1}_{\phi^n} \partial_{x_d} \phi^n - L^{-1}_{\psi^n} \partial_{x_d} \phi^n\|_{H^{s+1}}
    \le \Pi\left( \| \phi \|_{H^s}, \|1/\phi\|_{L^\infty},\| \psi
      \|_{H^s},\|1/\psi\|_{L^\infty} \right) \| \phi^n -
    \psi^n\|_{H^{s}}.
\end{equation*}
(Recall that $\Pi$ denotes a polynomial in its arguments with positive
coefficients. Here, it is wholly determined by $s$, $d$ and $n$.)
Factoring, and using that $H^s(\T^d)$ is an algebra, gives
$\| \phi^n - \psi^n\|_{H^{s}(\T^d)} \le
\Pi(\|\phi\|_{H^s(\T^d)},\|\psi\|_{H^s(\T^d)})\| \phi - \psi
\|_{H^{s}(\T^d)}.$ Thus, all together we have:
\begin{multline*}
  \| L^{-1}_{\phi^n} \partial_{x_d} \phi^n -
  L^{-1}_{\psi^n} \partial_{x_d} \phi^n\|_{H^{s+1}(\T^d)} \\ \le
  \Pi\left( \| \phi \|_{H^s(\T^d)}, \|1/\phi\|_{L^\infty(\T^d)},\|
    \psi \|_{H^s(\T^d)},\|1/\psi\|_{L^\infty(\T^d)} \right) \| \phi -
  \psi\|_{H^{s}(\T^d)}.
\end{multline*}

The second term on the right-hand side in estimate \eqref{e:picard1} can be
estimated using \eqref{boot est} as follows:
$$
\| L^{-1}_{\psi^n} \partial_{x_d} \phi^n -
L^{-1}_{\psi^n} \partial_{x_d} \psi^n\|_{H^{s+1}(\T^d)} \le \Pi\left(
  \| \psi^n \|_{H^s(\T^d)}, \| \psi^{-n} \|_{L^\infty(\T^d)}
\right)\| \partial_{x_d} (\phi^n - \psi^n )\|_{H^{s-1}(\T^d)}.
$$
Arguments exactly like those used above show that
\begin{multline*}
  \| L^{-1}_{\psi^n} \partial_{x_d} \phi^n -
  L^{-1}_{\psi^n} \partial_{x_d} \psi^n\|_{H^{s+1}(\T^d)} \\ \le
  \Pi\left( \| \phi \|_{H^s(\T^d)}, \|1/\phi\|_{L^\infty(\T^d)},\|
    \psi \|_{H^s(\T^d)},\|1/\psi\|_{L^\infty(\T^d)} \right) \| \phi -
  \psi\|_{H^{s}(\T^d)}.\end{multline*} And so all together, if $\phi$
and $\psi \in U$ then:
$$
\| N(\phi) - N(\psi) \|_{H^{s+1}(\T^d)} \le \Pi\left( \| \phi
  \|_{H^s(\T^d)}, \|1/\phi\|_{L^\infty(\T^d)},\| \psi
  \|_{H^s(\T^d)},\|1/\psi\|_{L^\infty(\T^d)} \right) \| \phi -
\psi\|_{H^{s}(\T^d)}.
$$
Likewise one has, for $\phi \in U$:
$$
\|N(\phi) \|_{H^{s+1}(\T^d)} \le \Pi( \| \phi
\|_{H^s(\T^d)},\|1/\phi\|_{L^\infty(\T^d)}) \| \phi \|_{H^{s}(\T^d)}.
$$
These last two estimates imply that $N$ is a locally Lipschitz map on
$U$ into $H^{s+1}(\T^d)\subset H^{s}(\T^d)$.  Thus we have, using the
Picard Theorem \cite{zeidler}:

\begin{theorem} Let $s \in \N$ such that
  $s> d/2 + \lfloor d/2 \rfloor + 2$ and let
  $$ U:= \left\{ f \in H^{s}(\T^d) : \| 1/f \|_{L^\infty(\T^d)} <
    \infty \text{ and } f > 0\right\}.$$ Suppose that $\phi_0 \in
  U$. Then there exists $T_\alpha< 0 < T_\omega$ and
  $\phi \in C^{1}((T_\alpha,T_\omega);U)$ with $\phi(0) = \phi_0$ such
  that $\phi$ satisfies \eqref{magma} for all
  $t \in (T_\alpha,T_\omega)$. The value $T_\omega$ (resp. $T_\alpha$) is
  maximal in the following sense that we have the dichotomy:
  \begin{enumerate}
  \item $T_\omega = + \infty$ (resp. $T_\alpha = - \infty$), or
  \item
    $\lim_{t \to T_\omega^-} \left( \| \phi(t) \|_{H^s(\T^d)} +
      \|1/\phi(t)\|_{L^\infty(\T^d)} \right) = + \infty$ \\(resp.
    $\ds \lim_{t \to T_\alpha^+} \left( \| \phi(t) \|_{H^s(\T^d)} +
      \|1/\phi(t)\|_{L^\infty(\T^d)} \right) = + \infty$ ).
  \end{enumerate}
  The function $\phi$ is the unique function with the properties
  listed above.  Lastly, the map from $\phi_0$ to the solution $\phi$
  is continuous. That is to say, fix $\phi_0 \in U$ and let $\phi(t)$
  be the solution described above. Then for all $\ep > 0$ and compact
  sets $I \subset (T_\alpha,T_\omega)$ there exists $\delta > 0$ such
  that $v_{0}\in U$ and $\| v_0 - u_0 \|_{H^{s}(\T^d)} \le \delta$ implies
  $\sup_{t \in I} \| u(t) - v(t) \| \le \epsilon$. Here,
  $v \in C^1(I;U)$ solves \eqref{magma} with $v(0) = v_0$.
\end{theorem}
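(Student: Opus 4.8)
The plan is to deduce the theorem from the abstract theory of ordinary differential equations on a Banach space, since the analysis preceding the statement has already recast \eqref{magma} as the autonomous equation $\phi_t = N(\phi)$, with $N(\phi) = -L^{-1}_{\phi^n}[\partial_{x_d}\phi^n]$ shown to be a locally Lipschitz map of the open set $U$ into $H^s(\T^d)$. The hypothesis $s > d/2 + \lfloor d/2\rfloor + 2$ certainly implies the condition $s > d/2 + \lfloor d/2 \rfloor + 1$ under which the estimates \eqref{boot est} and \eqref{lip est-2}, and hence the local Lipschitz and growth bounds for $N$, were derived. First I would invoke the Picard--Lindel\"of theorem on Banach spaces \cite{zeidler}: for $\phi_0 \in U$ it produces a unique maximal solution $\phi \in C^1((T_\alpha, T_\omega); U)$ with $\phi(0) = \phi_0$, together with the continuous dependence on initial data over compact time intervals asserted in the last sentence of the statement. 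Local existence, uniqueness, and continuous dependence are thus immediate consequences of the cited theorem once the Lipschitz property of $N$ is in hand; the only clause requiring genuine work is the characterization of the endpoint $T_\omega$ (and, symmetrically, $T_\alpha$).

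For the dichotomy I would argue by contradiction and exploit a \emph{uniform} local existence time on sublevel sets of the quantity $M(f) := \|f\|_{H^s(\T^d)} + \|1/f\|_{L^\infty(\T^d)}$. The point is that $M(f)$ controls $f$ from both sides: it bounds the $H^s$ size of $f$, and, through $\|1/f\|_{L^\infty}$ together with the Sobolev embedding $H^s \hookrightarrow C^0$ of Theorem \ref{SET}, it bounds $f$ away from the boundary $\{\inf f = 0\}$ of $U$. Concretely, if $M(f) \le R$ then $f \ge 1/R$ pointwise, and any $g$ within a fixed $H^s$-radius $\rho(R)$ of $f$ still satisfies $g \ge 1/(2R) > 0$ and $\|g\|_{H^s} \le R + \rho(R)$. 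On such a ball the growth and Lipschitz estimates for $N$ give bounds depending only on $R$, so the Picard existence time starting from any $f$ with $M(f) \le R$ is bounded below by a positive $\tau(R)$ depending only on $R$ (and on $s$, $d$, $n$). Now suppose $T_\omega < \infty$ while $M(\phi(t))$ fails to diverge to $+\infty$ as $t \to T_\omega^-$; then $M(\phi(t_n)) \le R$ for some finite $R$ along a sequence $t_n \uparrow T_\omega$. Choosing $n$ large enough that $T_\omega - t_n < \tau(R)$ and restarting the flow from $\phi(t_n)$ produces a solution living past $T_\omega$, which by uniqueness extends $\phi$ and contradicts the maximality of $T_\omega$. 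Hence either $T_\omega = +\infty$ or $\lim_{t \to T_\omega^-} M(\phi(t)) = +\infty$, which is exactly alternative (1) or (2); the statement for $T_\alpha$ follows by applying the same reasoning to the time-reversed flow $-N$.

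The step I expect to be the main, if modest, obstacle is precisely this upgrade of the abstract continuation principle to blow-up of the explicit quantity $\|\phi(t)\|_{H^s} + \|1/\phi(t)\|_{L^\infty}$. This is where the infinite-dimensional geometry of $U$ intrudes: a set bounded in $H^s$ need not be precompact, so one cannot simply appeal to compactness of a sublevel set, and one must instead extract the uniform existence time $\tau(R)$ from the structure of the estimates for $N$, using that membership of $U$ is governed jointly by the regularity bound $\|\phi\|_{H^s}$ and the nondegeneracy bound $\|1/\phi\|_{L^\infty}$. Everything else is routine bookkeeping: uniqueness and continuous dependence are the standard Gronwall-type outputs of the Lipschitz estimate, and one need only verify that the solution produced indeed remains in $U$ throughout $(T_\alpha, T_\omega)$, which is built into the Banach-space Picard theorem applied on the open set $U$.
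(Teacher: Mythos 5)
Your proposal is correct and takes essentially the same route as the paper: the paper's proof consists precisely of the local Lipschitz and growth estimates for $N(\phi) = -L^{-1}_{\phi^n}\left[\partial_{x_d}\phi^n\right]$ on the open set $U$ (established in the text immediately preceding the theorem) followed by an appeal to the Picard theorem on Banach spaces \cite{zeidler}. Your explicit continuation argument for the blow-up dichotomy --- a uniform existence time $\tau(R)$ on sublevel sets of $\|\phi\|_{H^s(\T^d)} + \|1/\phi\|_{L^\infty(\T^d)}$, using the Sobolev embedding to keep perturbed data bounded away from the degenerate boundary of $U$, then restarting the flow near $T_\omega$ --- is exactly the standard content that the paper delegates to the cited theorem rather than spelling out.
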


\subsubsection{Problem on $\R^d$}

The argument for the $\T^d$ case adapts to $\R^d$, with the shift of
the function at infinity, giving us:
\begin{theorem} Let $s \in \N$ such that
  $s> d/2 + \lfloor d/2 \rfloor + 2$ and let
  $$ U:= \left\{ f-1 \in H^{s}(\R^d) : \| 1/f \|_{L^\infty(\T^d)} <
    \infty \text{ and } f > 0\right\}.$$ Suppose that $\phi_0 \in
  U$. Then there exists $T_\alpha< 0 < T_\omega$ and
  $\phi \in C^{1}((T_\alpha,T_\omega);U)$ with $\phi(0) = \phi_0$ such
  that $\phi$ satisfies \eqref{magma} for all
  $t \in (T_\alpha,T_\omega)$. The value $T_\omega$ (resp. $T_\alpha$) is
  maximal in the following sense that we have the dichotomy:
  \begin{enumerate}
  \item $T_\omega = + \infty$ (resp. $T_\alpha = - \infty$), or
  \item
    $\lim_{t \to T_\omega^-} \left( \| \phi(t)-1 \|_{H^s(\R^d)} +
      \|1/\phi(t)\|_{L^\infty(\R^d)} \right) = + \infty$ \\(resp.
    $\ds \lim_{t \to T_\alpha^+} \left( \| \phi(t)-1 \|_{H^s(\R^d)} +
      \|1/\phi(t)\|_{L^\infty(\R^d)} \right) = + \infty$ ).
  \end{enumerate}
  The function $\phi$ is the unique function with the properties
  listed above.  Lastly, the map from $\phi_0$ to the solution $\phi$
  is continuous. That is to say, fix $\phi_0 \in U$ and let $\phi(t)$
  be the solution described above. Then for all $\ep > 0$ and compact
  sets $I \subset (T_\alpha,T_\omega)$ there exists $\delta > 0$ such
  that $v_{0}\in U$ and $\| v_0 - u_0 \|_{H^{s}(\R^d)} \le \delta$ implies
  $\sup_{t \in I} \| u(t) - v(t) \| \le \epsilon$. Here,
  $v \in C^1(I;U)$ solves \eqref{magma} with $v(0) = v_0$.
\end{theorem}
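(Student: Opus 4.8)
The plan is to mirror the torus argument exactly, recasting \eqref{magma} as the autonomous ODE $\phi_t = N(\phi)$ on the open subset $U$ of the affine space $1+H^s(\R^d)$, where $N(\phi):=-L^{-1}_{\phi^n}[\partial_{x_d}\phi^n]$, and then applying the Picard theorem for ODEs on Banach spaces \cite{zeidler}. First I would check that $U$ is open in $1+H^s(\R^d)$ and that $N$ is well defined on it. Since $s>d/2$, the space $H^s(\R^d)$ is an algebra and $\phi\in U$ is bounded above (by Sobolev embedding) and bounded below away from zero (by $\|1/\phi\|_{L^\infty}<\infty$); hence $\phi^n-1\in H^s(\R^d)$---for integer $n$ by the factorization $\phi^n-1=(\phi-1)(\phi^{n-1}+\dots+1)$ together with the algebra property, and for real $n$ by a standard composition (Moser) estimate---while $\|1/\phi^n\|_{L^\infty}=\|1/\phi\|_{L^\infty}^n<\infty$ and $\phi^n>0$. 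Thus $a:=\phi^n$ satisfies the hypotheses of Proposition \ref{boot2}, so $L^{-1}_{\phi^n}$ is a bounded map $H^{s-1}(\R^d)\to H^{s+1}(\R^d)$, and since $\partial_{x_d}\phi^n=\partial_{x_d}(\phi^n-1)\in H^{s-1}(\R^d)$ the map $N$ sends $U$ into $H^{s+1}(\R^d)\subset H^s(\R^d)$.

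The core of the proof is the local Lipschitz bound for $N$, which I would obtain by transcribing the $\T^d$ computation with every coefficient norm $\|\phi^n\|_{H^s}$ replaced by the shifted norm $\|\phi^n-1\|_{H^s}$. Writing
$$N(\phi)-N(\psi)=-\big(L^{-1}_{\phi^n}-L^{-1}_{\psi^n}\big)\partial_{x_d}\phi^n-L^{-1}_{\psi^n}\partial_{x_d}(\phi^n-\psi^n)$$
splits the difference into an ``operator'' piece, controlled by the $\R^d$ difference estimate \eqref{e:lipest2} of Corollary \ref{lip2}, and a ``data'' piece, controlled by the single-operator estimate of Proposition \ref{boot2}. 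In both pieces one uses that $\|\partial_{x_d}(\phi^n-\psi^n)\|_{H^{s-1}}$ and $\|\phi^n-\psi^n\|_{H^s}$ are bounded by $\Pi(\|\phi-1\|_{H^s},\|\psi-1\|_{H^s})\,\|\phi-\psi\|_{H^s}$, again via the algebra property and the telescoping factorization of $\phi^n-\psi^n$. The outcome is
$$\|N(\phi)-N(\psi)\|_{H^{s+1}}\le \Pi\big(\|\phi-1\|_{H^s},\|1/\phi\|_{L^\infty},\|\psi-1\|_{H^s},\|1/\psi\|_{L^\infty}\big)\,\|\phi-\psi\|_{H^s},$$
together with the companion bound $\|N(\phi)\|_{H^{s+1}}\le\Pi(\|\phi-1\|_{H^s},\|1/\phi\|_{L^\infty})\,\|\phi-1\|_{H^s}$, which show $N$ to be locally Lipschitz and locally bounded as a map from $U$ into $H^{s+1}(\R^d)\subset H^s(\R^d)$.

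With these estimates, the Picard theorem yields a unique maximal solution $\phi\in C^1((T_\alpha,T_\omega);U)$ with $\phi(0)=\phi_0$ and continuous dependence on $\phi_0$ over compact time intervals, establishing every clause of the statement except the blow-up dichotomy. For the dichotomy I would argue by continuation: suppose $T_\omega<\infty$ yet $M:=\limsup_{t\to T_\omega^-}\big(\|\phi(t)-1\|_{H^s}+\|1/\phi(t)\|_{L^\infty}\big)<\infty$. Then the companion bound makes $\phi_t=N(\phi)$ uniformly bounded in $H^s(\R^d)$ on $[0,T_\omega)$, so $t\mapsto\phi(t)-1$ is uniformly Lipschitz into $H^s(\R^d)$ and, by completeness, converges to some $\phi^\ast-1\in H^s(\R^d)$ as $t\to T_\omega^-$. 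The Sobolev embedding \eqref{e:sobolevCHs2} upgrades this to uniform convergence, so the pointwise bound $\phi(t)\ge 1/M$ passes to the limit to give $\phi^\ast\ge 1/M>0$ and hence $\phi^\ast\in U$. Restarting the iteration from $\phi^\ast$ at time $T_\omega$ would then extend the solution past $T_\omega$, contradicting maximality; the same reasoning applies at $T_\alpha$. This proves the stated dichotomy and, in particular, Theorem \ref{t:main1}.

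The adaptation involves no new analytic difficulty once Proposition \ref{boot2} and Corollary \ref{lip2} are granted; the only genuine care is bookkeeping the shift at infinity, i.e.\ consistently working in $1+H^s(\R^d)$, measuring coefficient regularity through $\|a-1\|_{H^s}$, and verifying that each nonlinear quantity ($\phi^n-1$, $1/\phi-1$, and the various products) truly decays so as to lie in $H^s(\R^d)$ rather than merely $L^\infty$. The step I expect to be most delicate is the continuation argument for the dichotomy, since the Picard theorem only guarantees that the maximal solution leaves every compact subset of $U$, and converting this into the concrete blow-up of $\|\phi-1\|_{H^s}+\|1/\phi\|_{L^\infty}$ requires the completeness-plus-embedding argument above to keep the limiting state inside $U$.
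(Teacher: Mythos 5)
Your reformulation and Lipschitz estimates follow the paper's route exactly: the paper proves the torus theorem by recasting \eqref{magma} as $\phi_t = N(\phi) = -L^{-1}_{\phi^n}[\partial_{x_d}\phi^n]$, splitting $N(\phi)-N(\psi)$ into the same operator piece and data piece you use, invoking the elliptic estimates (Proposition \ref{boot2} and Corollary \ref{lip2} in place of their torus analogues), and then citing the Picard theorem of \cite{zeidler}; for $\R^d$ the paper says only that the torus argument ``adapts with the shift of the function at infinity,'' which is precisely the bookkeeping in $1+H^s(\R^d)$ that you carry out. That portion of your proposal is correct and is the paper's argument.

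The one genuine gap is in your continuation proof of the blow-up dichotomy --- the part the paper delegates entirely to the cited Picard theorem. You assume $T_\omega < \infty$ and $\limsup_{t\to T_\omega^-}\left(\|\phi(t)-1\|_{H^s} + \|1/\phi(t)\|_{L^\infty}\right) < \infty$ and derive a contradiction; this proves only that the $\limsup$ is infinite, whereas the theorem asserts that the \emph{limit} is $+\infty$. The negation of alternative (2) is merely $\liminf_{t \to T_\omega^-}(\cdots) < \infty$, i.e.\ boundedness along some sequence $t_k \to T_\omega^-$; in that case $N(\phi(t))$ need not be bounded on all of $[0,T_\omega)$, so your completeness argument (uniform Lipschitz continuity of $t \mapsto \phi(t)$ into $H^s$, hence convergence to a limit $\phi^\ast \in U$) never gets started. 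The standard repair replaces it: the Picard existence time from any datum $\psi_0 \in U$ is bounded below by some $\tau(M)>0$ depending only on $\|\psi_0-1\|_{H^s}+\|1/\psi_0\|_{L^\infty} \le M$, because the bound and Lipschitz constants of $N$ on such a set are polynomial in exactly these two quantities; choosing $k$ with $T_\omega - t_k < \tau(M)$ and restarting from $\phi(t_k)$, uniqueness glues the two solutions and extends $\phi$ past $T_\omega$, a contradiction. With that substitution your argument yields the dichotomy exactly as stated, and the rest of the proposal stands.
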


\section{Solitary Waves}
\label{s:solitary}

As noted in the introduction, solitary wave solutions are
readily observed in numerical simulations of \eqref{magma} (see, for instance, \cite{Simpson:2011fx, Scott:1986kf,scott1984magma,Barcilon:1986wd,Barcilon:1989ve,Wiggins:2012iv}), and can be
formally obtained from the {\it ansatz} \eqref{e:travelingwave1},
$\phi(\xb,t) := \bar{Q}(\left \vert \xb - \bar{c}t \mathbf{e}_d
\right\vert )$.  The profile $\bar{Q}=\bar{Q}(\bar{r})$ must then
solve:
\begin{subequations} \label{IVP0}
  \begin{equation} \label{ODE0}
  -\bar{c}\bar{Q}_{\bar{r}} + (\bar{Q}^n)_{\bar{r}} + \bar{c} (
  \bar{Q}^n \bar{Q}_{\bar{r}\bar{r}})_{\bar{r}} + \bar{c}(d-1)
  \bar{Q}^n \Big(\frac{1}{\bar{r}} \bar{Q}_{\bar{r}}\Big)_{\bar{r}} =
  0.
  \end{equation} 
The goal is to find speeds $\bar{c}>0$ and bounded, positive, non-constant
solutions $\bar{Q}(\bar{r}) \in C^3(\R_{\ge 0})$ of \eqref{ODE0} satisfying
  \begin{equation} \label{IC0} 
  \bar{Q}(0) = \bar{q}_0 > 0, \quad
  \bar{Q}_{\bar{r}}(0) = 0, \quad \bar{Q}_{\bar{r}\bar{r}}(0) =
  \bar{\mu} < 0 ,
  \end{equation}
\end{subequations}
where $\bar{q}_0$ and $\bar{\mu}$ are also free parameters just like
the speed $\bar{c}$.

When $d = 1$, the final term in the left-hand side of \eqref{ODE0}
vanishes, and the resulting ODE is simple enough to allow for
classical phase plane analysis and stable manifold theorem analysis,
yielding existence of solutions and decay
properties \cite{Simpson:2008ia, Nakayama:1992vi}.  When $d > 1$, the
situation is more complicated because the ODE \eqref{ODE0} is
non-autonomous and not integrable. To reduce the number of free
parameters, we rescale the variables as follows:
\begin{equation} \label{res}
\bar{Q} = \bar{q}_0 Q , \quad \bar{r} = \bar{q}_0^{\frac{n}{2}} r ,
\quad \bar{c} = \bar{q}_0^{n-1} c , \quad \bar{\mu} = \bar{q}_0^{1-n}
\mu .
\end{equation}
This turns \eqref{IVP0} into the following initial value problem (IVP):
\begin{subequations} \label{IVP}
  \begin{gather}
    -Q_r + \frac{1}{c}(Q^n)_r + ( Q^n Q_{rr} )_r + (d-1)Q^n \Big(\frac{1}{r} Q_r \Big)_r = 0 , \label{ODE} \\
    Q(0) = 1, \quad Q_r(0) = 0, \quad Q_{rr}(0) = \mu < 0 . \label{IC}
  \end{gather}
\end{subequations}

Note that the occurrence of the $1/r$ singularity in the ODE
\eqref{ODE} prohibits the application of the standard theory of
existence, uniqueness, and continuous dependence for regular initial
value problems. However, the term $Q_r/r$ is ``not really singular''
under the initial condition $Q_r(0) = 0$ since it implies that
$\lim_{r \rightarrow 0^+} Q_r/r = Q_{rr}(0) = \mu$. Consequently, we
still have existence, uniqueness, and continuous dependence for the
IVP \eqref{IVP} as stated in the following lemma.

\begin{lemma} \label{lem EUC} The following are true for the IVP
  \eqref{IVP}:
  \begin{enumerate}
  \item For every $d$, $n$, $c\, (\neq 0)$, and $\mu$, there exists 
    $\sigma > 0$ such that the IVP \eqref{IVP} has a unique solution
    $Q(r) \in C^3([0, \sigma])$.
  \item If for $d$, $n$, $c\, (\neq 0)$, and $\mu$, the IVP
    \eqref{IVP} has a solution $Q(r) \in C^3([0, \sigma])$ for some
    $\sigma > 0$, then for every $\Delta > 0$, there exists a
    $\delta > 0$ such that for every
    $d' \in (d - \delta, d + \delta)$,
    $n' \in (n - \delta, n + \delta)$,
    $c' \in (c - \delta, c + \delta)$, and
    $\mu' \in (\mu - \delta, \mu + \delta)$, the IVP \eqref{IVP} also
    has a solution $Q'(r) \in C^3([0, \sigma])$ and
    $\|Q' - Q\|_{C^3([0, \sigma])} \le \Delta$ with
    $\|\cdot\|_{C^3([0, \sigma])}$ being the standard $C^3$-norm on
    $C^3([0, \sigma])$.
  \end{enumerate}
\end{lemma}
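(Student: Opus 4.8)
The plan is to convert the IVP \eqref{IVP} into an integral equation in which the apparent $1/r$ singularity is resolved by the integrating factor $r^{d-1}$, and then to run a contraction-mapping argument; standard dependence-on-parameters machinery will then yield part (2). First I would solve \eqref{ODE} for the top derivative. Since $Q(0)=1>0$ the factor $Q^n$ is bounded away from zero for $r$ small, so dividing by $Q^n$ gives
\[
Q_{rrr}+(d-1)\Big(\frac{Q_r}{r}\Big)_r=\frac{1}{Q^n}\Big[\,Q_r-\tfrac{n}{c}Q^{n-1}Q_r-nQ^{n-1}Q_rQ_{rr}\,\Big]=:R[Q](r).
\]
Here $R[Q]$ is a smooth, non-singular function of $(Q,Q_r,Q_{rr})$ so long as $Q>0$, while the left-hand side is exactly $(\Delta_r Q)_r$ with $\Delta_r Q:=Q_{rr}+\frac{d-1}{r}Q_r$ the radial Laplacian. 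Thus the equation reads $(\Delta_r Q)_r=R[Q]$.

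Next I would integrate twice, using the initial data to fix constants and to annihilate the singular boundary terms. Because $Q_r(0)=0$ and $Q_{rr}(0)=\mu$, one has $\Delta_r Q(0)=d\mu$, so $\Delta_r Q(r)=d\mu+\int_0^r R[Q]\,ds=:\Psi(r)$. Multiplying $Q_{rr}+\frac{d-1}{r}Q_r=\Psi$ by $r^{d-1}$ turns the left side into $(r^{d-1}Q_r)_r$, and integrating from $0$ (where $r^{d-1}Q_r$ vanishes) yields the manifestly regular expressions
\[
Q_r(r)=r^{1-d}\int_0^r s^{d-1}\Psi(s)\,ds,\qquad Q(r)=1+\int_0^r Q_r(\rho)\,d\rho .
\]
These define a nonlinear operator $Q\mapsto\mathcal T[Q]$ on, say, $C^2([0,\sigma])$, from which $Q_r$ and $Q_{rr}=\Psi-\frac{d-1}{r}Q_r$ are read off. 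The weight $s^{d-1}$ and the prefactor $r^{1-d}$ conspire so that $Q_r(r)\sim\mu r$ and $Q_{rr}(r)\to\mu$ as $r\to0^+$, making all the would-be singular quantities $Q_r/r$ and $(Q_r/r)_r$ bounded. A fixed point of $\mathcal T$ is precisely a $C^3$ solution of \eqref{IVP}.

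For part (1) I would then show that, for $\sigma$ small, $\mathcal T$ maps a small closed ball about the constant function $1$ in $C^2([0,\sigma])$ into itself and is a contraction there: each application of $\mathcal T$ carries explicit powers of $\sigma$ from the iterated integrals, while $R[\cdot]$ is Lipschitz on the ball, where $Q$ stays positive and bounded. Banach's fixed-point theorem then gives a unique fixed point, and a short bootstrap, $R[Q]\in C^0\Rightarrow\Psi\in C^1\Rightarrow Q_r\in C^2\Rightarrow Q\in C^3$, upgrades it to the claimed $C^3$ solution and gives uniqueness in $C^3([0,\sigma])$.

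For part (2) I would track the dependence of $\mathcal T$ on $\lambda:=(d,n,c,\mu)$: $\mu$ enters linearly through $\Delta_r Q(0)=d\mu$, $c$ through $1/c$, $n$ through $Q^n=e^{n\ln Q}$, and $d$ through the exponents $s^{d-1},\,r^{1-d}$ and the coefficient $d-1$. For $\lambda'$ near $\lambda$ the contraction constant can be taken uniform and $\lambda'\mapsto\mathcal T_{\lambda'}[Q]$ is continuous uniformly on the ball, so the uniform contraction principle yields continuity of the fixed point $Q_{\lambda'}$ in $C^2$, hence (via $\Psi$ and $R$) in $C^3([0,\sigma])$, which is exactly the estimate $\|Q'-Q\|_{C^3([0,\sigma])}\le\Delta$ for $\lambda'$ within $\delta$. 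The main obstacle throughout is the rigorous treatment of the removable singularity: the raw system for $(Q,Q_r,Q_{rr})$ contains the terms $Q_{rr}/r$ and $Q_r/r^2$, which are unbounded for generic data, so Picard--Lindel\"of does not apply directly. The integrating-factor reformulation is precisely what renders these bounded by building the correct $r\to0$ asymptotics into every iterate, and verifying the self-map and contraction estimates for the weighted operator uniformly in $\lambda$ (including the non-integer values of $d$ permitted in part (2)) is the technical heart of the argument.
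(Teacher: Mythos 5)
The paper never writes out a proof of Lemma \ref{lem EUC}: it only remarks that the proof is ``just a modification of the standard proof'' exploiting $Q_r \sim \mu r$ as $r \to 0^+$, and omits it. Your scheme is exactly such a modification, and your part (1) is sound: rewriting \eqref{ODE} as $(\Delta_r Q)_r = R[Q]$ with $\Delta_r Q = Q_{rr} + \frac{d-1}{r}Q_r$, fixing $\Delta_r Q(0) = d\mu$ from \eqref{IC}, and inverting $(r^{d-1}Q_r)_r = r^{d-1}\Psi$ builds the asymptotics $Q_r \sim \mu r$, $Q_{rr}\to\mu$ into every iterate, and your formula $Q_{rr} = \Psi - \frac{d-1}{r}Q_r$ (equivalently $Q_{rr}(r) = \Psi(r) - (d-1)\int_0^1 t^{d-1}\Psi(rt)\,dt$, which is manifestly regular at $r=0$) makes both the contraction estimates and the final $C^3$ bootstrap go through. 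One cosmetic repair: the invariant set cannot literally be a \emph{small} ball about the constant function $1$ in $C^2([0,\sigma])$, since any fixed point satisfies $|Q_{rr}(0)| = |\mu|$; take instead asymmetric radii (say $\|Q-1\|_{C^0}\le 1/2$, $\|Q_r\|_{C^0}\le M_1$, $\|Q_{rr}\|_{C^0}\le M_2$), which the powers of $\sigma$ produced by $\mathcal{T}$ accommodate and which keeps $Q$ positive, as your Lipschitz estimates on $R$ require.

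The genuine gap is in part (2). Your uniform-contraction argument yields continuity of the fixed point only on the small interval $[0,\sigma_0]$ on which $\mathcal{T}_{\lambda'}$ is a uniform contraction, but in the statement $\sigma$ is the length of an \emph{arbitrary} interval on which the unperturbed solution happens to exist, and in the paper's applications (Lemma \ref{lem open}, where part (2) is invoked on $[0,r_*+\epsilon]$, and Lemma \ref{lem existence}, where it is invoked on $[0,\tau+\epsilon]$) this interval is emphatically not small. For such $\sigma$ the map $\mathcal{T}$ is neither a self-map nor a contraction, so the conclusion does not follow as you have written it. The missing step is a patching/continuation argument: use your weighted contraction near $r=0$ to obtain existence of $Q'$ and $C^3$-closeness on $[0,\sigma_0]$; on $[\sigma_0,\sigma]$ the equation is a \emph{regular} ODE (the coefficient $1/r$ is bounded by $1/\sigma_0$, and $Q^n$ stays bounded away from zero along the given solution --- positivity being implicit in the lemma and satisfied in all of its uses in the paper), so classical continuous dependence plus a Gronwall estimate lets the perturbed solution be continued across $[\sigma_0,\sigma]$ while remaining $\Delta$-close in $C^3$, provided $\delta$ is small enough. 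With that supplement your argument is complete and is, as far as one can tell, precisely the proof the paper alludes to.
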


The proof of Lemma \ref{lem EUC} is just a modification of the
standard proof, and it makes use of the fact that $Q_r \sim \mu r$
when $r \rightarrow 0^+$. We omit the proof here.

With the existence, uniqueness, and continuous dependence for the IVP
\eqref{IVP} established, the next proposition asserts the existence of strictly
decreasing, positive solutions $Q(r) \in C^3(\R_{\ge 0})$. By virtue
of the rescaling \eqref{res}, each of these solutions gives rise to a
one-parameter family of traveling wave solutions to \eqref{magma} as stated in Theorem \ref{t:main2}.

\begin{proposition} \label{prp Q} Suppose that $d \in \N$ and
  $n \in [2, 3]$. Then for each $c \in [1.55, n)$, there exists
  $\mu_c < 0$ such that the IVP \eqref{IVP} with $\mu = \mu_c$ has a
  solution $Q(r) \in C^3(\R_{\ge 0})$ satisfying that $Q_r(r) < 0$ for
  all $r > 0$ and $0 < \lim_{r \rightarrow \infty} Q(r) < 1$.
\end{proposition}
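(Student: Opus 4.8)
The plan is to prove Proposition \ref{prp Q} by a shooting argument in the single remaining free parameter $\mu = Q_{rr}(0) < 0$, using Lemma \ref{lem EUC} for existence, uniqueness, and continuous dependence of the solution $Q(\,\cdot\,;\mu)$. First I would put \eqref{ODE} in solved form for the top derivative,
\[
Q_{rrr} = \frac{1}{Q^n}\Big[\big(1 - \tfrac{n}{c}Q^{n-1}\big)Q_r - nQ^{n-1}Q_rQ_{rr}\Big] - (d-1)\Big(\frac{Q_{rr}}{r} - \frac{Q_r}{r^2}\Big),
\]
which is a regular third-order system wherever $Q>0$ (the $1/r$ terms being removable as in Lemma \ref{lem EUC}), and continue each solution to a maximal interval $[0,R_\mu)$ on which $Q$ stays positive and the derivatives stay bounded. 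Since $Q_r(0)=0$ and $Q_{rr}(0)=\mu<0$, every solution is strictly decreasing just to the right of $0$, so $Q(r)<1$ for small $r>0$; hence any global monotone solution automatically has $\lim_{r\to\infty}Q<1$, which already handles the upper bound in the conclusion.

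The organizing quantity is $H(r) := -Q + \tfrac1c Q^n + Q^n Q_{rr}$, which is exactly conserved when $d=1$ and in general satisfies, by \eqref{ODE},
\[
H_r = -(d-1)Q^n\Big(\frac{Q_r}{r}\Big)_r .
\]
Paired with the effective potential $V(Q) = -Q + \tfrac1c Q^n$, whose unique interior critical point is the minimum $Q_* := (c/n)^{1/(n-1)} \in (0,1)$, this lets me read off the asymptotics: linearizing \eqref{ODE} about a constant state $Q_\infty$ gives an approach governed by the sign of $\tfrac nc Q_\infty^{n-1}-1$, positive (oscillatory, so $Q_r$ must return to zero) when $Q_\infty>Q_*$ and negative (monotone exponential approach) when $Q_\infty<Q_*$. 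Thus a genuine monotone solitary wave must settle at some $Q_\infty\in(0,Q_*)$.

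The shooting step classifies each $\mu<0$ by which of two events happens first along its trajectory: $\mathcal{A}$, the set of $\mu$ for which $Q_r$ first vanishes at some finite $r_1$ with $Q>0$ there (the wave ``turns around''); and $\mathcal{B}$, the set of $\mu$ for which $Q$ first descends through a fixed threshold $\beta\in(0,Q_*)$ while $Q_r<0$ (the wave ``overshoots''). Each is open by transversality of its defining crossing condition together with the continuous dependence in Lemma \ref{lem EUC}, and they are disjoint by construction. I would show $\mathcal{A}\ne\emptyset$ by taking $|\mu|$ small: there $Q$ stays near $1>Q_*$ over a long $r$-interval, where the dynamics is a small perturbation of the oscillator $v_{rr}=-\kappa^2 v$ with $v=Q_r$ and $\kappa^2=\tfrac nc-1>0$, forcing $Q_r$ back to zero near $r=\pi/\kappa$ at a value close to $1$, hence above $\beta$. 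I would show $\mathcal{B}\ne\emptyset$ by taking $|\mu|$ large, using $H$ and $V$ to see that a sufficiently steep initial curvature carries $Q$ monotonically down past $\beta$. Since $\mathcal{A},\mathcal{B}$ are disjoint, open, and nonempty while $(-\infty,0)$ is connected, they cannot cover it, so there is $\mu_c\in(-\infty,0)\setminus(\mathcal{A}\cup\mathcal{B})$; its trajectory neither turns around nor crosses $\beta$, so $Q_r<0$ for all $r>0$ and $\beta\le Q\le 1$, which keeps $Q^n$ bounded away from $0$, yields $R_{\mu_c}=\infty$ and global $C^3$ regularity, and gives a limit $Q_\infty\in[\beta,1)\subset(0,1)$.

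The hard part is the quantitative content of the two nonemptiness claims and the a priori bounds sustaining global existence for the critical trajectory. Because $H$ fails to be monotone when $d>1$ (the sign of $(Q_r/r)_r$ is indefinite), establishing that large $|\mu|$ actually drives $Q$ monotonically below $\beta$, rather than turning it around first, requires combining the $H$-balance identity above with a barrier estimate on the potential $V$ along decreasing trajectories. I expect this is exactly where the explicit hypothesis $c\ge 1.55$ (with $n\in[2,3]$) enters: it is the regime in which the requisite inequalities on $V$ and on the growth of $H$ are quantitatively valid, guaranteeing simultaneously that $\mathcal{B}$ is nonempty and that the separating trajectory $\mu_c$ settles at a positive equilibrium below $Q_*$.
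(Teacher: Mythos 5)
Your topological skeleton — shooting in $\mu$, two disjoint open sets (``turns around'' vs.\ ``overshoots a threshold $\beta$''), and connectedness of $(-\infty,0)$ — is essentially the same strategy as the paper's (the paper instead takes $\mu_c=\sup A$ of its overshoot set and then excludes turnaround at $\mu_c$), but your proposal has genuine gaps precisely at the points you yourself flag as ``the hard part,'' and these are the analytical core of the result, not details. First, nonemptiness of $\mathcal{B}$: you never show that large $|\mu|$ drives $Q$ monotonically below $\beta$; you only say it should follow from $H$ plus a ``barrier estimate.'' The paper does \emph{not} work with your $H$ (whose production term $-(d-1)Q^n(Q_r/r)_r$ has indefinite sign, as you note); instead it divides \eqref{ODE} by $Q^n$ and integrates twice, producing the pointwise inequalities \eqref{bound}, i.e.\ $Q_r^2<2F_2(Q,\mu)Q^{-n}$, and \eqref{DE bound}, i.e.\ $Q_{rr}+\tfrac{nQ_r^2}{2Q}+(d-1)\tfrac{Q_r}{r}<F_3(Q,\mu)$. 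The decisive structural fact is that each $F_i(Q,\mu)=g_i(Q)+h_i(Q)\mu$ is affine in $\mu$, so its sign regions are bounded by explicit curves $\mu_i(Q)$, and $\mu<\mu_3(Q_3)$ forces $F_3<0$ on the entire relevant range, which is what kills both turnaround and bounded monotone convergence (Lemma \ref{lem subset}). Second, global existence of the critical trajectory: from ``$Q$ trapped in $[\beta,1]$ with $Q_r<0$'' you conclude $R_{\mu_c}=\infty$, but boundedness of $Q$ alone does not prevent $Q_r$ or $Q_{rr}$ from blowing up at finite $r$ (the ODE contains the quadratic term $Q_rQ_{rr}$); the paper gets the needed a priori derivative bounds from \eqref{bound} and \eqref{Qrr}. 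Third, the constant $1.55$: you only conjecture its role; in the paper it guarantees $F_3(Q_*,\mu)=g_3(Q_*)<0$ for all $\mu$, where $Q_*$ is the zero of $h_3$ (depending only on $n$) — note this is \emph{not} your threshold $(c/n)^{1/(n-1)}$, which is the paper's $Q_1$ — and that sign is exactly what excludes the critical trajectory limping into $Q_*$ with zero slope (case (ii.c)).

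Two smaller but real issues. Openness of $\mathcal{A}$ is not automatic ``by transversality'': at the first zero $r_1$ of $Q_r$ one only knows $Q_{rr}(r_1)\ge 0$, and the degenerate case $Q_{rr}(r_1)=0$ must be ruled out by backward uniqueness (data $(Q(r_1),0,0)$ evolves only into the constant solution); this is the paper's case (ii.b) argument, and without it your set need not be open. Also, your small-$|\mu|$ argument for $\mathcal{A}\neq\emptyset$ linearizes to $v_{rr}=-\kappa^2 v$, which drops the $(d-1)$ terms; the correct linearization is Bessel-type, $u_{rr}+\tfrac{d-1}{r}u_r+\kappa^2 u=d\mu$. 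The conclusion (a first transversal zero of $Q_r$ at finite $r$ with $Q$ near $1$) survives, but it requires a quantitative perturbation argument beyond the continuous dependence of Lemma \ref{lem EUC}; the paper sidesteps this endpoint entirely, since its Lemma \ref{lem empty} shows no overshoot for $\mu\in(\mu_2(Q_2),0)$ directly from the sign of $F_2$. In short: your outline is sound and parallels the paper, but without the integral identities \eqref{Qrr}--\eqref{DE bound} and the $F_i$/$\mu_i$ sign analysis, the overshoot claim, the exclusion claims, and global existence all remain unproven, so this is a plan rather than a proof.
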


Furthermore, we prove the following sufficient condition for the exponential convergence of the monotonic solution $Q(r)$:

\begin{proposition}\label{prp suff cond}
  If $Q_r(r) < 0$ for all $r \in (0,\infty)$ and
  $0 < \lim_{r \rightarrow \infty} Q(r) =: Q_{\tau} <
  (\frac{c}{n})^{\frac{1}{n-1}}$, then there exist $M,k > 0$ such
  that $|Q(r) - Q_{\tau}| \le M e^{-kr}$ for all $r > 0$.
\end{proposition}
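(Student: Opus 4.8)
The plan is to linearize the profile equation \eqref{ODE} about its limiting value $Q_\tau$ and to convert the resulting hyperbolicity into a genuine exponential bound by a maximum principle. Rather than work with the third-order equation for $Q$ directly (whose linearization carries a spurious zero characteristic root coming from translations of the limit), I would work with $v := -Q_r$, which is strictly positive on $(0,\infty)$ by hypothesis and lies in $C^2$. Expanding \eqref{ODE} and dividing by $Q^n > 0$, the equation can be rewritten as the second-order ODE
\begin{equation*}
  v_{rr} = q(r)\,v_r + p(r)\,v, \qquad
  q(r) = \frac{n}{Q}\,v - \frac{d-1}{r}, \qquad
  p(r) = \frac{1 - \frac{n}{c}Q^{n-1}}{Q^n} + \frac{d-1}{r^2}.
\end{equation*}
The key structural observation is that the standing hypothesis $Q_\tau < (c/n)^{1/(n-1)}$ is precisely the statement $1 - \frac{n}{c}Q_\tau^{n-1} > 0$, so that $p(r) \to k_\ast^2 := (1 - \frac{n}{c}Q_\tau^{n-1})/Q_\tau^n > 0$ as $r \to \infty$; the limiting equation $v_{rr} = k_\ast^2 v$ is a hyperbolic saddle whose stable mode decays like $e^{-k_\ast r}$.

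Second, I would establish that $v(r) \to 0$. Monotonicity and convergence of $Q$ give $\int_0^\infty v\,dr = Q(0) - Q_\tau < \infty$, so $v \in L^1$ and hence $\liminf_{r\to\infty} v = 0$. To upgrade this to a limit, note that since $Q(r)\downarrow Q_\tau$ there is an $R$ with $p(r) > 0$ for all $r \ge R$; at any interior local maximum $r^\ast \ge R$ of $v$ one has $v_r(r^\ast) = 0$ and $v_{rr}(r^\ast) \le 0$, yet the ODE forces $v_{rr}(r^\ast) = p(r^\ast)v(r^\ast) > 0$, a contradiction. Thus $v$ has no positive local maximum on $[R,\infty)$, and a function that descends to near $0$ and later rises back to a positive level must have an interior maximum in between; together with $\liminf v = 0$ this forces $v \to 0$. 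Consequently $q(r) \to 0$ as well.

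Third, with $p \to k_\ast^2 > 0$ and $q \to 0$ in hand, I would fix any $k \in (0,k_\ast)$ and choose $R' \ge R$ so large that $p(r) > 0$ and $k^2 + q(r)k - p(r) < 0$ for all $r \ge R'$. Taking the barrier $\bar v(r) := A e^{-kr}$ with $A$ large enough that $\bar v(R') \ge v(R')$, the difference $z := v - \bar v$ satisfies $z(R') \le 0$, $z \to 0$ as $r \to \infty$, and $z_{rr} - q z_r - p z = -\bar v\,(k^2 + qk - p) > 0$ on $[R',\infty)$. A positive maximum of $z$, necessarily attained at an interior point $r_0$ (since $z(R')\le 0$ and $z\to 0$), would give $z_r(r_0)=0$ and $z_{rr}(r_0)\le 0$, whence $z_{rr} - qz_r - pz \le -p(r_0)z(r_0) < 0$ there, a contradiction. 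Therefore $z \le 0$, i.e. $v(r) \le A e^{-kr}$ for $r \ge R'$. Integrating, $Q(r) - Q_\tau = \int_r^\infty v\,ds \le (A/k)e^{-kr}$ for $r \ge R'$, and enlarging the constant to absorb the bounded interval $[0,R']$ yields $|Q(r)-Q_\tau| \le M e^{-kr}$ for all $r \ge 0$, as claimed.

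The main obstacle is the passage from the formal linearization to a rigorous estimate: the coefficient $q$ contains both the nonlinear term $\frac{n}{Q}v$ and the non-autonomous terms $\pm(d-1)/r$ and $(d-1)/r^2$, so one must genuinely control the solution before the constant-coefficient picture applies. This is exactly why establishing $v \to 0$ is the crux, and I expect the cleanest route to be the no-positive-local-maximum argument above, which exploits the sign of $p$ directly and avoids any a priori bound on $Q_{rr}$. Once $v \to 0$ is known, both the comparison step and the choice of admissible rate $k \in (0,k_\ast)$ are routine, and the argument is indifferent to the dimension $d$.
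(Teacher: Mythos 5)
Your proposal is correct, but it takes a genuinely different route from the paper's. The paper argues through the twice-integrated ``energy'' identity \eqref{Qr2}: after re-centering the integrals at $Q_\tau$ and Taylor-expanding in $Q$ (which requires Lemma \ref{lem Qr}, i.e.\ $Q_r \to 0$ and $Q_{rr} \to 0$, itself proved from the integral bounds \eqref{bound} and \eqref{DE bound}), it obtains $\tfrac12 Q^n Q_r^2 > \tfrac12 L (Q - Q_\tau)^2$ near the limit, where $L = \frac{1}{Q_\tau^n} - \frac{n}{c Q_\tau}$ is positive exactly under the hypothesis $Q_\tau < (c/n)^{1/(n-1)}$; this first-order differential inequality, bounding $-Q_r$ from below by a multiple of $Q - Q_\tau$, then integrates to exponential decay. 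You instead work with the unintegrated equation: setting $v = -Q_r > 0$ you rewrite \eqref{ODE} as $v_{rr} = q\,v_r + p\,v$ (your computation of $p$ and $q$ is correct), observe that your $k_*^2 = \bigl(1 - \tfrac{n}{c}Q_\tau^{n-1}\bigr)/Q_\tau^n$ is precisely the paper's $L$ and is positive under the same hypothesis, prove $v \to 0$ by the elementary no-interior-maximum argument combined with $v \in L^1$, and then run a maximum-principle comparison against the explicit barrier $A e^{-kr}$ with $k < k_*$. Your route is self-contained---it uses none of the identities \eqref{Qrr}--\eqref{DE bound} nor Lemma \ref{lem Qr}---and it yields slightly more: exponential decay of $Q_r$ itself, and any rate $k < k_*$, i.e.\ arbitrarily close to the linearized rate. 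What the paper's route buys is economy within its own framework: the functions $F_1$, $F_2$, $F_3$ and identities \eqref{Qrr}, \eqref{Qr2} are already in place from the shooting argument, and the same limiting relations are reused in verifying Example \ref{prp exp conv}. Two steps in your write-up are stated tersely and deserve a sentence each in a final version: that a positive $C^2$ function with no interior local maximum on $[R,\infty)$ and $\liminf v = 0$ must in fact tend to $0$ (otherwise a dip below followed by a rise produces an interior maximum), and that $z(R') \le 0$ together with $z \to 0$ forces any positive supremum of $z$ to be attained at an interior point; both are correct as you indicate.
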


We prove Proposition \ref{prp Q} in Section \ref{prf prp Q}, and we prove Proposition \ref{prp suff cond} in Section \ref{prf prp exp conv}. In addition, we present an example at the end of Section \ref{prf prp exp conv} with numerical evidence suggesting that the conditions of Proposition \ref{prp suff cond} hold for some specific values of $d$, $n$, and $c$ and hence $Q(r)$ converges exponentially to $Q_{\tau}$ for those parameter values. Our strategy is to establish a lower bound on $\mu_c$ by numerically integrating (\ref{IVP}) with certain $\mu$. Then, together with some constructions we use in the proof of Proposition \ref{prp Q}, this lower bound on $\mu_c$ implies that $Q_{\tau} < (\frac{c}{n})^{\frac{1}{n-1}}$. Although we do not have a proof of the general exponential convergence of $Q(r)$, it is our belief that the conditions of Proposition \ref{prp suff cond} hold for most (if not all) $d$, $n$, and $c$ as specified in Proposition \ref{prp Q}.

\subsection{Proof of Proposition \ref{prp Q}} \label{prf prp Q}

In view of item (1) of Lemma \ref{lem EUC} and the initial conditions
\eqref{IC}, for each solution $Q(r)$ of the IVP \eqref{IVP}, we define
\begin{equation} \label{tau}
\tau := \sup \big\{ \sigma > 0 \;\big|\; \text{$Q(r) \in C^3([0, \sigma])$, and $Q(r) > 0$ and $Q_r(r) < 0$ for all $r \in (0, \sigma]$} \big\} .
\end{equation}
Clearly, $\tau$ depends on
the parameters $c$ and $\mu$ for the IVP \eqref{IVP}, and it is
possible that $\tau = \infty$. By the definition of $\tau$, we have
that $Q(r)$ is of class $C^3$ on $[0, \tau)$ but its derivatives may
not be bounded as $r \rightarrow \tau^-$ and that $Q(r) > 0$ and
$Q_r(r) < 0$ for all $r \in (0,\tau)$. Thus, regardless of whether
$\tau$ is finite or not,
$Q_{\tau} := \lim_{r \rightarrow \tau^-} Q(r)$ always exists, and
$0 \le Q_{\tau} < 1$. In what follows, we will first construct some
technical estimates valid for $r \in (0,\tau)$. Then we will prove the
proposition using a shooting argument based on these estimates.

Since $Q(r)$ is strictly decreasing on $[0, \tau)$, it has an inverse
$r(Q)$ defined for all $Q \in (Q_{\tau}, 1]$. Then, for any function
$f(r)$, we have that for all $r \in [0, \tau)$ and correspondingly
$Q = Q(r) \in (Q_{\tau}, 1]$,
\begin{equation} \label{r to q}
\int_0^r f Q_r dr = \int_1^Q f dq ,
\end{equation}
where $dq = Q_r dr$ and `$f$' represents $f(r)$ in
the first integral and $f(r(q))$ in the second integral. In the
subsequent derivations, our notation will follow this convention
whenever we substitute the variable of integration in the form of
\eqref{r to q}.

Dividing \eqref{ODE} by $Q^n$ and then integrating from $0$ to $r$
gives 
\begin{equation*}
\frac{1}{n-1} \left(\frac{1}{Q^{n-1}} - 1\right) +
\frac{n}{c}\ln{Q} + (Q_{rr} - \mu) + n \int_0^r \frac{Q_{rr} Q_r}{Q}
dr + (d-1) \left( \frac{Q_r}{r} - \mu \right) =0 .
\end{equation*}
Integrating the remaining integral by parts and using \eqref{r to q}, we have that $\int_0^r \frac{Q_{rr} Q_r}{Q} dr = \frac{Q_r^2}{2 Q} + \frac{1}{2} \int_1^Q \frac{Q_r^2}{q^2} dq$. Then after rearranging terms in the above equation, we obtain that
\begin{equation} \label{Qrr}
Q_{rr} + \frac{n Q_r^2}{2 Q} = F_1(Q,\mu) - \frac{n}{2} \int_1^Q \frac{Q_r^2}{q^2} dq - (d-1) \frac{Q_r}{r} ,
\end{equation}
where 
\begin{equation} \label{F1}
F_1(Q,\mu) := - \frac{1}{n-1}
\left(\frac{1}{Q^{n-1}} - 1\right) - \frac{n}{c}\ln{Q} + \mu d .
\end{equation}
Next, multiplying both sides of \eqref{Qrr} by $Q^n Q_r$ and then
integrating again from $0$ to $r$ leads to
\begin{equation} \label{Qr2}
\frac{1}{2}Q^n Q_r^2 = \int_1^Q F_1(q,\mu) q^n dq - \int_1^Q \left(
  \frac{n}{2} \int_1^q \frac{Q_r^2}{\hat{q}^2} d\hat{q} + (d-1)
  \frac{Q_r}{r} \right) q^n dq .
\end{equation}
For all $r \in (0, \tau)$ and correspondingly $Q = Q(r) \in (Q_{\tau}, 1)$, the second integral in the right-hand side is strictly positive. It follows that
\begin{equation} \label{bound}
Q_r^2 < 2 F_2(Q,\mu) Q^{-n} ,
\end{equation}
where
\begin{equation} \label{F2}
F_2(Q,\mu) := \int_1^Q F_1(q,\mu) q^n dq .
\end{equation}
Finally, we define 
\begin{equation} \label{F3}
F_3(Q,\mu) := F_1(Q,\mu) - n
\int_1^Q F_2(q,\mu) q^{-(n+2)} dq .
\end{equation}
Then, in view of \eqref{Qrr}
and \eqref{bound}, we have that
\begin{equation} \label{DE bound}
Q_{rr} + \frac{n Q_r^2}{2 Q} + (d-1) \frac{Q_r}{r} < F_3(Q,\mu) .
\end{equation}

We emphasize that although inequalities \eqref{bound} and \eqref{DE bound} require $\mu < 0$ and are restricted to $r \in (0, \tau)$ and
correspondingly $Q = Q(r) \in (Q_{\tau}, 1)$, the functions
$F_i(Q,\mu)$, $i = 1, 2, 3$, as given by \eqref{F1}, \eqref{F2}, and
\eqref{F3}, are defined for all $(Q,\mu) \in \R_{>0} \times
\R$. Furthermore, these functions can be put in the form of 
\begin{equation} \label{Fi}
F_i(Q,\mu) = g_i(Q) + h_i(Q) \mu .
\end{equation}
The expressions of
$g_1$ and $h_1$ can be extracted from \eqref{F1} immediately. One can
also obtain the closed-form expressions of $g_2$, $h_2$, $g_3$, and
$h_3$ after solving the integrals in \eqref{F2} and \eqref{F3}. In
particular, we have
\begin{align}
  h_1(Q) \equiv{}& d , \notag \\
  h_2(Q) ={}& \frac{d}{n+1} (Q^{n+1} - 1) , \notag \\
  h_3(Q) ={}& d \left( 1 - \frac{n \ln{Q}}{n+1} - \frac{n}{(n+1)^2} \left(\frac{1}{Q^{n+1}}-1\right) \right). \label{h3}
\end{align}
The closed-form expressions of $g_1$, $g_2$, and $g_3$ are omitted here. Note that $h_1$ is always positive and that $h_2(Q)<0$ for $Q < 1$ and $h_2(Q)=0$ only when $Q=1$. In addition, it is easy to check that for each $n > 0$, there is a unique $Q_* \in (0,1)$, depending on $n$ only, such that $h_3(Q_*) = 0$. Furthermore, $h_3(Q) > 0$ for $Q \in (Q_*, 1]$. Then $h_1$, $h_2$, and $h_3$ are all nonzero for any $Q \in (Q_*, 1)$. For $i=1,2,3$, define $\mu_i : (Q_*, 1) \rightarrow \R$ as follows:
\begin{equation*}
\mu_i(Q) := -\frac{g_i(Q)}{h_i(Q)} .
\end{equation*}
In view of \eqref{Fi}, we
have that for any $Q \in (Q_*, 1)$, $F_i(Q,\mu) = 0$ only when
$\mu = \mu_i(Q)$, $i=1,2,3$. Furthermore, by the signs of $h_i$ on the interval $(Q_*, 1)$, we can determine the signs of $F_i(Q,\mu)$ for $(Q,\mu) \in (Q_*, 1) \times \R$ as shown in Figure \ref{FIG Fi}.

\begin{figure}[h]
  \centering 
    \subfigure[The graph of $\mu_1(Q)$. For $Q_* < Q < 1$,
    $F_1(Q,\mu)$ is positive in the region above the graph of 
    $\mu_1(Q)$ and is negative in the region below the graph
    of $\mu_1(Q)$.]{\label{FIG F1}\includegraphics[scale=0.35]{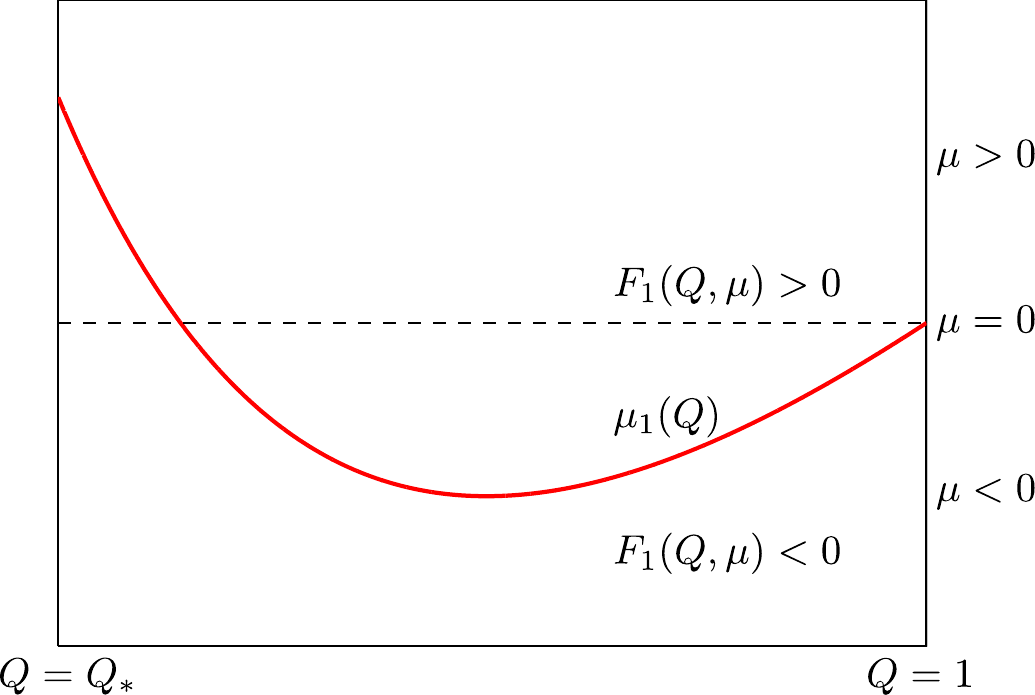}} \hspace{0.05in}
    \subfigure[The graph of $\mu_2(Q)$. For $Q_* < Q < 1$,
    $F_2(Q,\mu)$ is negative in the region above the graph of 
    $\mu_2(Q)$ and is positive in the region below the graph
    of $\mu_2(Q)$.]{\label{FIG F2}\includegraphics[scale=0.35]{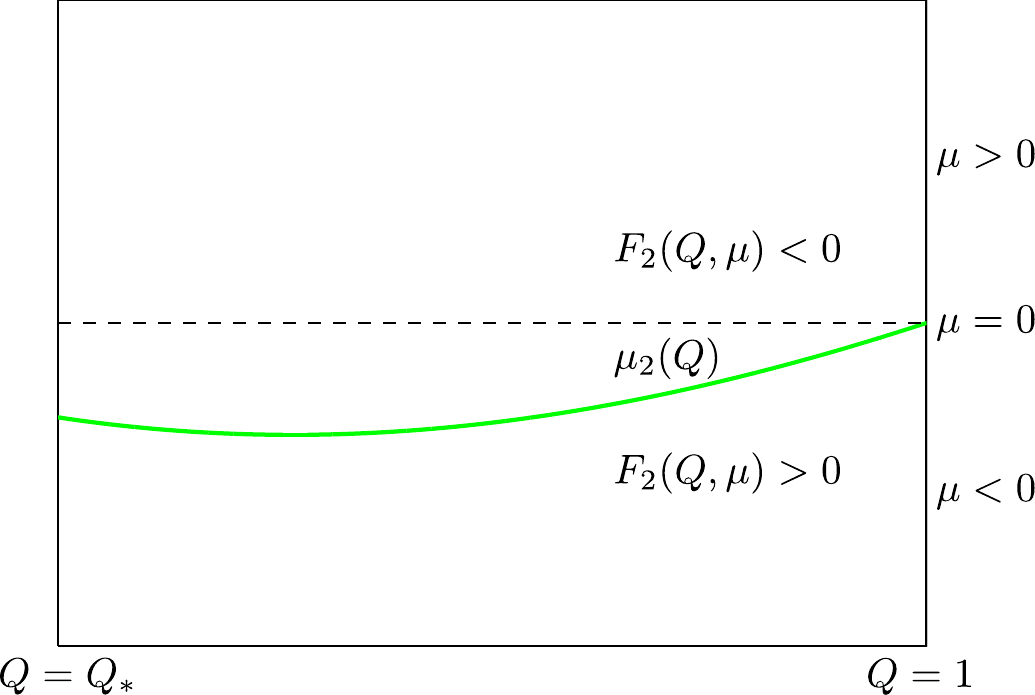}} \\
    \subfigure[The graph of $\mu_3(Q)$. For $Q_* < Q < 1$,
    $F_3(Q,\mu)$ is positive in the region above the graph of 
    $\mu_3(Q)$ and is negative in the region below the graph
    of $\mu_3(Q)$.]{\label{FIG F3}\includegraphics[scale=0.35]{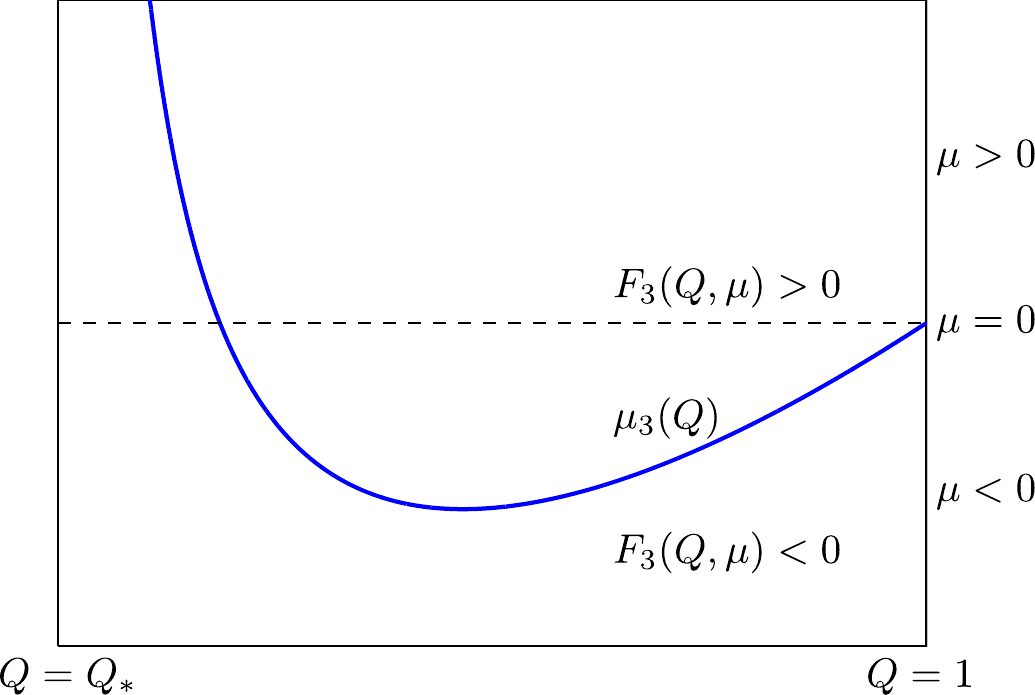}}
  \caption{The graphs of $\mu_i(Q)$ and the signs of $F_i(Q,\mu)$.}
  \label{FIG Fi}
\end{figure}

Furthermore, the graphs of $\mu_i(Q)$, as shown in Figure \ref{FIG MUi}, have the following properties.

\begin{lemma} \label{LEM mui properties} Suppose that $d > 0$, $n \in [2, 3]$, and $c \in [1.55, n)$. Then the following are true:
  \begin{enumerate}
  \item For $i = 1, 2, 3$, $\mu_i(Q) \rightarrow 0^-$ as
    $Q \rightarrow 1^-$.
  \item For each $i = 1, 2, 3$, $\mu_i(Q)$ has a unique global minimum
    at $Q = Q_i \in (Q_*, 1)$.
    \begin{enumerate}
      \renewcommand{\labelenumii}{(2\alph{enumii})}
    \item $Q_1 = (\frac{c}{n})^{\frac{1}{n-1}}$.
    \item $Q_2 < Q_1$, and the graphs of $\mu_1(Q)$ and $\mu_2(Q)$
      intersect at $Q = Q_2$.
    \item $\mu_3(Q_3) < \mu_1(Q_1) < \mu_2(Q_2) < 0$. \label{mui
        order}
    \end{enumerate}
  \item $F_3(Q_*,\mu) = g_3(Q_*) < 0$ for all $\mu \in \R$, and
    $\mu_3(Q) \rightarrow \infty$ as $Q \rightarrow Q_*^+$.
  \item $\mu_3(Q) < \mu_1(Q)$ for $Q \in (Q_2, 1)$.
  \end{enumerate}
\end{lemma}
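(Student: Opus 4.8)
The plan is to treat each $\mu_i = -g_i/h_i$ as the unique root in $\mu$ of $F_i(Q,\mu) = g_i(Q) + h_i(Q)\mu$ and to differentiate the identity $F_i(Q,\mu_i(Q)) \equiv 0$ implicitly, which gives $\mu_i'(Q) = -\partial_Q F_i(Q,\mu_i(Q))/h_i(Q)$. Together with the sign facts $h_1 \equiv d > 0$, $h_2(Q) < 0$ on $(0,1)$, and $h_3(Q) > 0$ on $(Q_*,1]$, every monotonicity claim becomes a statement about the sign of $\partial_Q F_i$. For $i = 1,2$ this sign reduces to a comparison against the single curve $\mu_1$, and it is this reduction that makes the bulk of the lemma accessible.

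First I would dispatch (1) and items (2),(2a). From \eqref{F1} one has $g_1(1) = 0$ and $g_1'(1) = 1 - \frac{n}{c} < 0$ because $c < n$; a Taylor expansion at $Q = 1$ (with L'H\^opital's rule for $i = 2$, where $g_2$ and $h_2$ both vanish, and using $h_3(1) = d \neq 0$ for $i = 3$) then shows $\mu_i(Q) \to 0$ with $\mu_i < 0$ just below $1$, i.e. $\mu_i \to 0^-$. For $\mu_1$, writing $g_1'(Q) = Q^{-n}\left(1 - \frac{n}{c}Q^{n-1}\right)$ shows that $\mu_1' = -g_1'/d$ is negative for $Q < Q_1$ and positive for $Q > Q_1$, where $Q_1 = \left(\frac{c}{n}\right)^{1/(n-1)} \in (0,1)$; hence $\mu_1$ has a unique global minimum at $Q_1$, which is (2a).

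Next come (2b) and (4), both via $\mu_2$. Differentiating $F_2(Q,\mu_2(Q)) \equiv 0$ and using $\partial_Q F_2 = F_1(Q,\mu)Q^n$ gives $\mu_2'(Q) = -F_1(Q,\mu_2(Q))Q^n/h_2(Q)$; since $h_2 < 0$ and $F_1(Q,\cdot)$ increases through its root $\mu_1(Q)$, the sign of $\mu_2'(Q)$ equals the sign of $\mu_2(Q) - \mu_1(Q)$. In particular every critical point of $\mu_2$ lies on the graph of $\mu_1$, which is precisely the intersection asserted in (2b). Differentiating once more at a critical point $Q_c$ gives $\mu_2''(Q_c) = -g_1'(Q_c)Q_c^n/h_2(Q_c)$, whose sign is that of $g_1'(Q_c)$; thus minima of $\mu_2$ occur only for $Q_c < Q_1$ and maxima only for $Q_c > Q_1$. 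Since two minima would require an intervening maximum, $\mu_2$ has a unique global minimum $Q_2$, necessarily with $Q_2 < Q_1$. Then $\mu_2(Q_2) = \mu_1(Q_2) > \mu_1(Q_1)$ (as $Q_2 \neq Q_1$ and $Q_1$ is the strict minimizer of $\mu_1$), while $\mu_2(Q_2) < 0$ follows from the boundary behavior in (1); this yields the $\mu_1(Q_1) < \mu_2(Q_2) < 0$ half of (2c). The same analysis gives $\mu_2 > \mu_1$, hence $\mu_2' > 0$, throughout $(Q_2,1)$. For (4) I would note that $\mu_3(Q) < \mu_1(Q)$ is equivalent to $F_3(Q,\mu_1(Q)) > 0$ (because $h_3 > 0$), and since $F_1(Q,\mu_1(Q)) = 0$, \eqref{F3} reduces this to $n\int_Q^1 F_2(q,\mu_1(Q))\,q^{-(n+2)}\,dq > 0$; for $q \in (Q,1)$ with $Q \in (Q_2,1)$ the monotonicity just established gives $\mu_2(q) \ge \mu_2(Q) > \mu_1(Q)$, so each $F_2(q,\mu_1(Q)) > 0$ and the integral is positive.

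The hard part will be $\mu_3$, where $\partial_Q F_3 = g_1' - nF_2(\cdot,\mu)Q^{-(n+2)}$ mixes the two lower orders, so the clean reduction to a comparison with $\mu_1$ no longer applies. I do not expect a purely structural derivation of the existence and location of the minimizer $Q_3$, of the sharp ordering $\mu_3(Q_3) < \mu_1(Q_1)$ in (2c), or of the sign condition $g_3(Q_*) < 0$ in (3). Instead the plan is to pass to the explicit closed forms behind \eqref{Fi}--\eqref{h3} and to verify the remaining scalar inequalities over the compact box $n \in [2,3]$, $c \in [1.55,n)$; this is almost certainly where the threshold $c \ge 1.55$ is needed, and it is the main obstacle. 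Once $g_3(Q_*) < 0$ is in hand, (3) is immediate: $h_3(Q_*) = 0$ with $h_3 > 0$ on $(Q_*,1]$ forces $h_3 \to 0^+$ as $Q \to Q_*^+$, so $F_3(Q_*,\mu) = g_3(Q_*) < 0$ for all $\mu$ and $\mu_3 = -g_3/h_3 \to +\infty$.
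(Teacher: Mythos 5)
Your route is genuinely different from the paper's: the paper gives essentially no argument for this lemma, stating in the remark that it ``can be checked by elementary calculations'' from the closed forms of $g_i$ and $h_i$ (recording only that $c<n$ gives $Q_1<1$ and that $c\ge 1.55$ gives $g_3(Q_*)<0$), whereas you replace most of that brute-force checking with structural arguments. Those arguments are largely correct and more illuminating: the sign rule $\mathrm{sign}\,\mu_2'(Q)=\mathrm{sign}\bigl(\mu_2(Q)-\mu_1(Q)\bigr)$ coming from $\partial_Q F_2=F_1Q^n$ and $h_2<0$, the classification of critical points of $\mu_2$ by the sign of $g_1'$, the deduction $\mu_1(Q_1)<\mu_2(Q_2)<0$, and the reduction of item (4) to $F_3(Q,\mu_1(Q))=n\int_Q^1 F_2\bigl(q,\mu_1(Q)\bigr)q^{-(n+2)}\,dq>0$ via monotonicity of $\mu_2$ on $(Q_2,1)$ all check out, and your identification of $c\ge 1.55$ as the source of $g_3(Q_*)<0$ matches the paper's remark exactly.

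There are, however, two genuine gaps, both of the same kind. First, your argument that $\mu_2$ has a unique global minimum proves only \emph{at most one}: ``two minima would require an intervening maximum'' rules out multiplicity but does not produce a critical point at all. Existence can be recovered structurally on $(0,1)$ --- e.g.\ $g_1\to-\infty$ so $\mu_1\to+\infty$ as $Q\to 0^+$ while $\mu_2$ stays bounded (its numerator is an integral of the integrable function $g_1(q)q^n$ and $h_2(0^+)\neq 0$), so $\mu_2<\mu_1$ and hence $\mu_2'<0$ near $0$, while $\mu_2'>0$ near $1$ --- but this only places the minimizer in $(0,1)$. Second, and more seriously, the lemma asserts $Q_i\in(Q_*,1)$, and you never verify $Q_1=(c/n)^{1/(n-1)}>Q_*$ nor $Q_2>Q_*$. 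These are quantitative facts about the box $n\in[2,3]$, $c\in[1.55,n)$ of exactly the same nature as the ones you defer to numerical/closed-form verification for $\mu_3$: for $n=3$ one has $Q_*\approx 0.59$, so $Q_1>Q_*$ already forces $c>3Q_*^2\approx 1.03$, which holds on the stated box but fails for small $c<n$. So the quantitative verification cannot be confined to the $\mu_3$ items (2c), (3), and $Q_3$; without it, items (2a) and (2b) as stated remain unproven, and your arguments for (2c) and (4) are conditional on them. (A minor further point: your min/max classification via $\mu_2''(Q_c)$ is silent about a possible degenerate critical point at exactly $Q_c=Q_1$, where $g_1'(Q_1)=0$; this too needs a word to make the ``unique minimum'' claim airtight.)
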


\begin{figure}[h]
  \centering
  \hspace{0.55in}\includegraphics[scale=0.5]{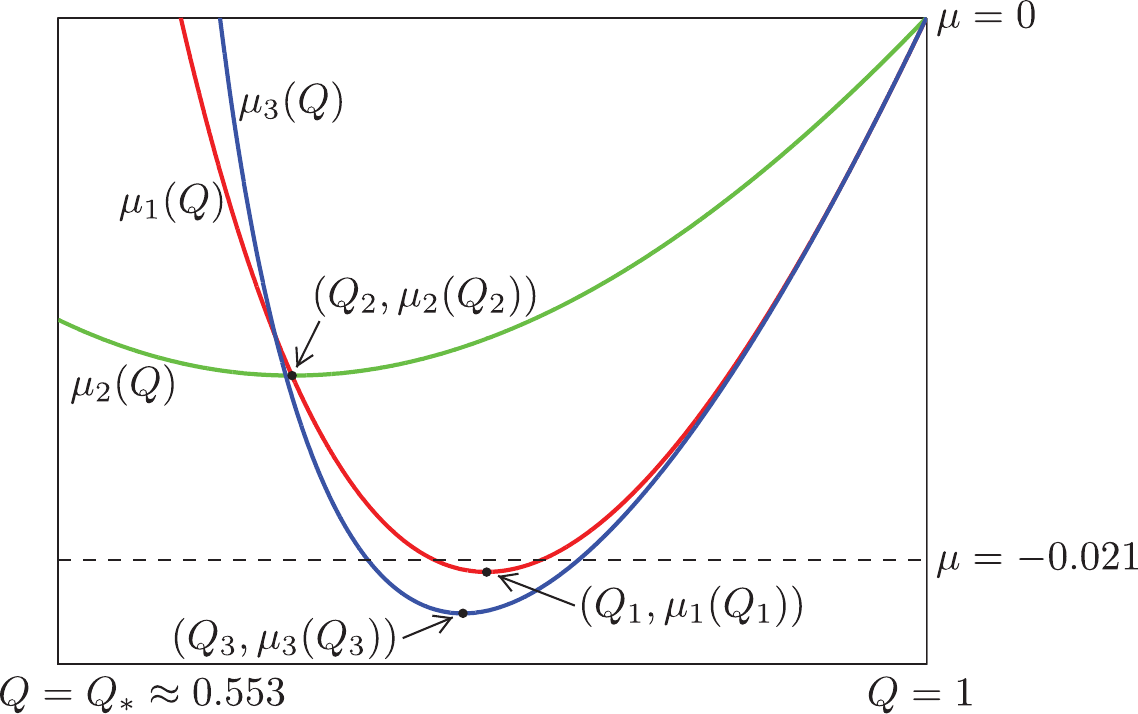}
  \caption{The graphs of $\mu_i(Q)$ with $d=3$, $n = 2.5$, and $c =1.7$. The properties stated in Lemma \ref{LEM mui properties} are true for all $d > 0$, $n \in [2, 3]$, and $c \in [1.55, n)$.}
  \label{FIG MUi}
\end{figure}

\begin{remark}\normalfont
  With the closed-form expressions of \eqref{Fi} for $i=1,2,3$, Lemma
  \ref{LEM mui properties} can be checked by elementary calculations,
  which are omitted. Here, we just mention the following:
  \begin{enumerate}
  \item The condition $c < n$ ensures that
    $Q_1 = (\frac{c}{n})^{\frac{1}{n-1}} < 1$.
  \item Recall that for each $n > 0$, $h_3(Q)$ has a unique zero at
    $Q = Q_* \in (0,1)$, which depends on $n$ only according to
    \eqref{h3}. The condition $c \ge 1.55$ guarantees that for any
    $n \in [2,3]$ and the corresponding $Q_*$,
    $F_3(Q_*,\mu) = g_3(Q_*) + h_3(Q_*) \mu = g_3(Q_*) < 0$. The lower
    bound $1.55$ can be further reduced, but such a lower bound is
    necessary as $g_3(Q_*)$ can become positive for some $n \in [2,3]$
    if $c$ is too small (though still positive).
  \end{enumerate}
\end{remark}

Recall that $Q_r(r) < 0$ for all $r \in (0,\tau)$. By the definition
\eqref{tau} of $\tau$, one of the following three mutually exclusive
cases must occur for a solution $Q(r)$ of the IVP \eqref{IVP}:
\begin{enumerate}
  \renewcommand{\labelenumi}{(\roman{enumi})}
\item $Q_{\tau} < Q_*$. This happens if and only if there exists a
  finite $r_* \in (0, \tau)$ (with either $\tau$ being finite or
  $\tau = \infty$) such that $Q(r_*) = Q_*$ and $Q_r(r) < 0$ for all
  $r \in (0, r_*]$.
\item $\tau < \infty$, and
  $\lim_{r \rightarrow \tau^-} Q(r) = Q_{\tau} \ge Q_*$. In this case,
  as $r \rightarrow \tau^-$, $Q_r(r)$ is bounded due to
  \eqref{bound}. In turn, we have the boundedness of $Q_{rr}(r)$ from
  \eqref{Qrr}, the boundedness of $Q_{rrr}(r)$ from \eqref{ODE}, and
  the boundedness of $Q_{rrrr}(r)$ after further differentiation of
  \eqref{ODE}. It follows that $Q(r) \in C^3([0,\tau])$. Thus, in this
  case, we actually have that $Q(\tau) = Q_{\tau} \ge Q_*$ and
  $Q_r(\tau) = 0$.
\item $\tau = \infty$, and
  $\lim_{r \rightarrow \infty} Q(r) = Q_{\tau} \ge Q_*$.
\end{enumerate}
Define the set $A$ as follows:

\begin{definition} \label{def A}
  $\mu < 0$ is in the set $A$ if and only if case (i) happens for the
  solution $Q(r)$ of the IVP \eqref{IVP} with $Q_{rr}(0) = \mu$.
\end{definition}

\begin{lemma} \label{lem open} $A$ is open.
\end{lemma}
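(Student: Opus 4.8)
The plan is to prove openness by a transversality-plus-continuous-dependence argument: if $\mu_0 \in A$, then every $\mu$ sufficiently close to $\mu_0$ also lies in $A$. Fix $\mu_0 \in A$ and let $Q$ be the corresponding solution of \eqref{IVP}. By the definition of $A$ and case (i), there is a finite $r_* \in (0, \tau)$ with $Q(r_*) = Q_*$ and $Q_r(r) < 0$ for all $r \in (0, r_*]$, where $\tau = \tau(\mu_0)$ is as in \eqref{tau}. Since $r_* < \tau$, I would pick $r_1 \in (r_*, \tau)$, so that $Q \in C^3([0, r_1])$, $Q_r(r) < 0$ for all $r \in (0, r_1]$, and—because $Q$ is strictly decreasing past $r_*$—$Q(r_1) < Q_*$. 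The geometric content is that $Q$ crosses the level $Q_*$ transversally and strictly monotonically, a property stable under small $C^3$ perturbations.

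First I would apply item (2) of Lemma \ref{lem EUC} with $\sigma = r_1$, holding $d$, $n$, $c$ fixed and varying only the initial curvature: for every $\Delta > 0$ there is $\delta > 0$ so that for $|\mu - \mu_0| < \delta$ the IVP \eqref{IVP} has a solution $\tilde{Q} \in C^3([0, r_1])$ (the solution with $\tilde{Q}_{rr}(0) = \mu$) satisfying $\|\tilde{Q} - Q\|_{C^3([0, r_1])} \le \Delta$; shrinking $\delta$ I may also assume $\mu < 0$. It then remains to choose $\Delta$ small enough that (a) $\tilde{Q}_r(r) < 0$ for all $r \in (0, r_1]$ and (b) $\tilde{Q}(r_1) < Q_*$. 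Condition (b) is immediate: with $\gamma := Q_* - Q(r_1) > 0$, any $\Delta < \gamma$ gives $\tilde{Q}(r_1) \le Q(r_1) + \Delta < Q_*$. Granting (a) and (b), the function $\tilde{Q}$ is continuous, starts at $\tilde{Q}(0) = 1 > Q_*$, is strictly decreasing on $(0, r_1]$, and has $\tilde{Q}(r_1) < Q_*$; so by the intermediate value theorem there is a unique $r_*' \in (0, r_1)$ with $\tilde{Q}(r_*') = Q_*$, and $\tilde{Q}_r < 0$ on $(0, r_*']$. This is exactly case (i) for the parameter $\mu$, whence $\mu \in A$ and $A$ is open.

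The one delicate point—and the main obstacle—is establishing (a), since $Q_r$ and $\tilde{Q}_r$ both vanish at $r = 0$, so one cannot bound $Q_r$ away from zero uniformly on all of $[0, r_1]$ and transfer that bound to $\tilde{Q}_r$ by $C^3$-closeness. I would resolve this by splitting $[0, r_1]$ into a boundary layer $[0, \eta]$ and the remainder $[\eta, r_1]$. On $[\eta, r_1]$ the continuous function $Q_r$ attains a negative maximum, say $Q_r \le -m < 0$, so taking $\Delta < m$ gives $\tilde{Q}_r \le -m + \Delta < 0$ there. On $[0, \eta]$ I would instead use the curvature: since $Q_{rr}(0) = \mu_0 < 0$, continuity lets me choose $\eta$ so small that $Q_{rr}(r) < \tfrac{1}{2}\mu_0$ on $[0, \eta]$, whence for $\Delta < \tfrac{1}{4}|\mu_0|$ one has $\tilde{Q}_{rr}(r) < \tfrac{1}{4}\mu_0 < 0$ there, and integrating from $0$ with $\tilde{Q}_r(0) = 0$ yields $\tilde{Q}_r(r) \le \tfrac{1}{4}\mu_0\, r < 0$ for $r \in (0, \eta]$. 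Choosing $\Delta < \min\{\gamma, m, \tfrac{1}{4}|\mu_0|\}$ and the associated $\delta$ then secures both (a) and (b), completing the argument.
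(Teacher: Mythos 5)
Your proof is correct and follows essentially the same route as the paper's: use the continuous dependence in item (2) of Lemma \ref{lem EUC} on a closed interval reaching strictly past the crossing of $Q_*$, and conclude that all nearby values of $\mu$ also realize case (i). The only difference is that you explicitly justify the step the paper leaves implicit---why $C^3$-closeness forces $\tilde{Q}_r<0$ all the way down to $r=0$ despite $Q_r(0)=\tilde{Q}_r(0)=0$---via the boundary-layer argument using $Q_{rr}(0)=\mu_0<0$, which is a worthwhile refinement of the paper's terser argument.
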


\begin{proof}
  If case (i) happens for $Q_{rr}(0) = \mu$, then there exists a small
  $\epsilon > 0$ such that $Q_{\tau} < Q(r_* + \epsilon) < Q_*$ and
  $Q_r(r) < 0$ for all $r \in (0, r_* + \epsilon]$ with
  $r_*+\epsilon < \tau$. Then by Lemma \ref{lem EUC}, there is a
  $\delta > 0$ such that for each
  $\mu' \in (\mu - \delta, \mu + \delta)$, the solution is below $Q_*$
  at $r = r_*+\epsilon$ and the derivative is negative for all
  $r \in (0, r_* + \epsilon]$. This implies that case (i) also happens
  for all $\mu' \in (\mu - \delta, \mu + \delta)$. Then
  $(\mu - \delta, \mu + \delta) \subseteq A$ by the definition of $A$.
\end{proof}

\begin{lemma} \label{lem subset} $(-\infty, \mu_3(Q_3) ) \subseteq A$.
\end{lemma}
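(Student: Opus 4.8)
The plan is to fix an arbitrary $\mu < \mu_3(Q_3)$ and show that case (i) occurs for the corresponding solution $Q$ of \eqref{IVP}, so that $\mu \in A$ by definition. Note first that $\mu_3(Q_3) < 0$ by item (2c) of Lemma \ref{LEM mui properties}, so $\mu < 0$ and the estimates \eqref{bound} and \eqref{DE bound} are available. Since $Q_3$ is the global minimizer of $\mu_3$ on $(Q_*,1)$ (item (2) of Lemma \ref{LEM mui properties}), we have $\mu < \mu_3(Q_3) \le \mu_3(Q)$ for every $Q \in (Q_*,1)$; by the sign rule for $F_3$ displayed in Figure \ref{FIG Fi} this forces $F_3(Q,\mu) < 0$ for all $Q \in (Q_*,1)$, and item (3) of Lemma \ref{LEM mui properties} extends this to $F_3(Q_*,\mu)=g_3(Q_*)<0$. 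Hence $F_3(Q,\mu)<0$ for all $Q \in [Q_*,1)$, which is the pointwise fact driving the whole argument.

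I would then suppose, for contradiction, that case (i) fails, so that case (ii) or (iii) holds and $Q_\tau \ge Q_*$. Because $Q$ decreases strictly from $Q(0)=1$ toward $Q_\tau$, we have $Q(r) > Q_*$ for every $r \in (0,\tau)$, and therefore $F_3(Q(r),\mu) < 0$ on $(0,\tau)$. The mechanism is to feed this into \eqref{DE bound} through the integrating factor $r^{d-1}Q^{n/2}$, using the identity
\[
 \frac{1}{r^{d-1}}\frac{d}{dr}\!\left(r^{d-1}Q^{n/2}Q_r\right) = Q^{n/2}\left(Q_{rr} + \frac{nQ_r^2}{2Q} + (d-1)\frac{Q_r}{r}\right),
\]
which turns \eqref{DE bound} into $\frac{d}{dr}\big(r^{d-1}Q^{n/2}Q_r\big) < r^{d-1}Q^{n/2}F_3(Q,\mu) < 0$. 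Writing $P(r) := r^{d-1}Q^{n/2}Q_r$, we conclude $P(0)=0$ and $P$ is strictly decreasing on $(0,\tau)$. In case (ii) the solution satisfies $Q \in C^3([0,\tau])$ with $Q(\tau)=Q_\tau \ge Q_*$ and $Q_r(\tau)=0$; letting $r\to\tau^-$ in \eqref{DE bound} (all terms being continuous and the singular term vanishing because $Q_r(\tau)=0$) gives $Q_{rr}(\tau) \le F_3(Q(\tau),\mu) < 0$. But $Q_r(\tau)=0$ together with $Q_{rr}(\tau)<0$ forces $Q_r>0$ immediately to the left of $\tau$, contradicting $Q_r<0$ on $(0,\tau)$.

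It remains to rule out case (iii), where $\tau=\infty$ and $Q(r)\to Q_\tau \ge Q_* > 0$; this quantitative estimate on the unbounded domain is the main obstacle. Here $Q^{n/2}F_3(Q,\mu) \to Q_\tau^{n/2}F_3(Q_\tau,\mu) =: -K$ with $K>0$, so $P'(r) < -\tfrac{K}{2}r^{d-1}$ for all large $r$, and integrating gives $P(r) \le -c\,r^{d}$ for some $c>0$ and all large $r$. Since $0 < Q \le 1$ implies $Q^{n/2}\le 1$ and $P<0$, we then obtain $Q_r(r) = P(r)/(r^{d-1}Q^{n/2}) \le P(r)/r^{d-1} \le -c\,r$, so $Q_r\to-\infty$ and hence $Q(r)\to-\infty$, contradicting $Q>0$. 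Both non-trivial cases being impossible, case (i) must hold and $\mu \in A$; as $\mu<\mu_3(Q_3)$ was arbitrary, $(-\infty,\mu_3(Q_3))\subseteq A$.
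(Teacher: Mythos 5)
Your proof is correct, and its overall skeleton is the same as the paper's: argue by contradiction that neither case (ii) nor case (iii) can occur when $\mu < \mu_3(Q_3)$, using that $F_3(Q,\mu)<0$ for all $Q \in [Q_*,1)$. Your handling of case (ii) — pass to the limit $r \to \tau^-$ in \eqref{DE bound} to get $Q_{rr}(\tau) \le F_3(Q_\tau,\mu) < 0$, incompatible with $Q_r < 0$ on $(0,\tau)$ and $Q_r(\tau)=0$ — is identical to the paper's. Where you genuinely diverge is case (iii). The paper first invokes \eqref{bound} to conclude that $Q_r$ is bounded (so that $(d-1)Q_r/r \to 0$), deduces $\limsup_{r\to\infty} Q_{rr} \le F_3(Q_\tau,\mu) < 0$, and then asserts that this negative $\limsup$ "prohibits the convergence of $Q$," a final step it leaves to the reader. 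You instead exploit the divergence structure of the left-hand side of \eqref{DE bound}: multiplying by the integrating factor $r^{d-1}Q^{n/2}$ gives $\frac{d}{dr}\left(r^{d-1}Q^{n/2}Q_r\right) < r^{d-1}Q^{n/2}F_3(Q,\mu)$, the right-hand side is eventually bounded above by $-\frac{K}{2}r^{d-1}$ with $K>0$, and integration plus $Q^{n/2}\le 1$ yields $Q_r(r) \le -cr$ for large $r$, hence $Q \to -\infty$, an explicit contradiction with $Q > Q_* > 0$. This buys two things: you never need the boundedness of $Q_r$, so \eqref{bound} plays no role in your case (iii), and the contradiction is fully quantitative rather than resting on the unproved (though easy) assertion about the $\limsup$. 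Both arguments are sound; yours is somewhat more self-contained, the paper's somewhat shorter.
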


\begin{proof}
  We show by contradiction that cases (ii) and (iii) cannot occur if
  $\mu < \mu_3(Q_3)$.

  Suppose that case (ii) happens. By continuity, evaluating \eqref{DE
    bound} at $r = \tau$ gives $Q_{rr}(\tau) \le F_3(Q_\tau,\mu) < 0$
  as $Q_\tau \ge Q_*$ and $\mu < \mu_3(Q_3)$ (see Figure \ref{FIG
    F3}). On the other hand, since $Q_r(r) < 0$ for all
  $r \in (0, \tau)$ and $Q_r(\tau)=0$, we must have that
  $Q_{rr}(\tau) \ge 0$, which yields a contradiction.

  Suppose that case (iii) happens. Then $Q(r) > Q_{\tau} \ge Q_* > 0$
  for all $r \in (0, \infty)$. It follows that
  $2 F_2(Q(r),\mu) (Q(r))^{-n}$ is bounded for all
  $r \in (0, \infty)$. Then, $Q_r(r)$ is also bounded for all
  $r \in (0, \infty)$ according to \eqref{bound}. From \eqref{DE
    bound}, we have that $Q_{rr}(r) + (d-1) Q_r(r)/r < F_3(Q(r),\mu)$
  for all $r \in (0, \infty)$. Then
  $\limsup_{r \rightarrow \infty} Q_{rr}(r) \le F_3(Q_{\tau},
  \mu)$. As $F_3(Q_{\tau}, \mu) < 0$ for $Q_{\tau} \ge Q_*$ and
  $\mu < \mu_3(Q_3)$, the estimate for
  $\limsup_{r \rightarrow \infty} Q_{rr}(r)$ prohibits the convergence
  of $Q(r)$, producing a contradiction.
\end{proof}

\begin{lemma} \label{lem empty} $(\mu_2(Q_2), 0) \cap A = \emptyset.$
\end{lemma}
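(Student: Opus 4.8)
The plan is to argue by contradiction, combining the a priori differential inequality \eqref{bound} with the sign information for $F_2$ recorded in Figure \ref{FIG F2}. Suppose, toward a contradiction, that some $\mu \in (\mu_2(Q_2), 0)$ belongs to $A$. Then case (i) holds for the corresponding solution $Q(r)$ of the IVP \eqref{IVP}: there is a finite $r_* \in (0, \tau)$ with $Q(r_*) = Q_*$ and $Q_r(r) < 0$ for all $r \in (0, r_*]$. Since in case (i) we have $Q_\tau < Q_*$, the whole interval $(Q_*, 1)$---and in particular the value $Q_2 \in (Q_*, 1)$ from Lemma \ref{LEM mui properties}---lies in the range $(Q_\tau, 1)$ on which \eqref{bound} is valid.

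The first key step is to observe that $F_2$ must stay positive along the decreasing trajectory. Indeed, \eqref{bound} reads $Q_r^2 < 2 F_2(Q, \mu) Q^{-n}$ for every $r \in (0, \tau)$ with $Q = Q(r) \in (Q_\tau, 1)$; because $Q_r(r) < 0$ forces $Q_r^2 > 0$ and $Q^{-n} > 0$, this is possible only when $F_2(Q(r), \mu) > 0$. Thus positivity of $F_2$ is a necessary condition for the trajectory to continue to decrease.

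The second step is to locate a point on the trajectory where $F_2$ is negative, producing the contradiction. Using the affine form $F_2(Q, \mu) = g_2(Q) + h_2(Q)\mu$ from \eqref{Fi} together with $\mu_2(Q) = -g_2(Q)/h_2(Q)$, we may write $F_2(Q_2, \mu) = h_2(Q_2)\,(\mu - \mu_2(Q_2))$. By Lemma \ref{LEM mui properties} we have $h_2(Q_2) < 0$ (since $Q_2 < 1$), while $\mu > \mu_2(Q_2)$ by assumption; hence $F_2(Q_2, \mu) < 0$, in agreement with the fact that $(Q_2, \mu)$ lies strictly above the graph of $\mu_2$ in Figure \ref{FIG F2}. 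Finally, since $Q$ decreases continuously from $Q(0) = 1$ to $Q(r_*) = Q_*$ with $Q_* < Q_2 < 1$, the intermediate value theorem yields some $r_{Q_2} \in (0, r_*) \subset (0, \tau)$ with $Q(r_{Q_2}) = Q_2$ and $Q_r(r_{Q_2}) < 0$. Evaluating \eqref{bound} there gives $0 < Q_r(r_{Q_2})^2 < 2 F_2(Q_2, \mu) Q_2^{-n} < 0$, which is absurd. Therefore no $\mu \in (\mu_2(Q_2), 0)$ can lie in $A$, i.e. $(\mu_2(Q_2), 0) \cap A = \emptyset$.

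I expect the only genuinely delicate point to be bookkeeping rather than analysis: one must ensure that the value $Q_2$ is actually attained along the trajectory and that \eqref{bound} is applicable there. Both are guaranteed in case (i), because $Q_\tau < Q_* < Q_2 < 1$ places $Q_2$ strictly inside the admissible range $(Q_\tau, 1)$, and the monotone decrease from $1$ to $Q_*$ forces the trajectory to pass through $Q_2$ with $Q_r < 0$.
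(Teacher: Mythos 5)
Your proof is correct and is essentially the paper's argument: both assume some $\mu \in (\mu_2(Q_2),0) \cap A$, use case (i) together with the a priori inequality \eqref{bound} to force $F_2(Q(r),\mu) > 0$ wherever the decreasing trajectory passes through $(Q_*,1)$, and then contradict this using the sign structure of $F_2$ relative to the graph of $\mu_2$. The only (harmless) difference is the finishing step: the paper produces $Q_s \in (Q_2,1)$ with $\mu = \mu_2(Q_s)$, hence $F_2(Q_s,\mu) = 0$, whereas you evaluate directly at $Q_2$ and use $F_2(Q_2,\mu) = h_2(Q_2)\,\left(\mu - \mu_2(Q_2)\right) < 0$ from $h_2(Q_2) < 0$ --- the same contradiction, reached at a different point and requiring only the sign of $h_2$ rather than the intermediate-value property of $\mu_2$ on $(Q_2,1)$.
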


\begin{proof}
  Suppose that there exists $\mu_s \in (\mu_2(Q_2), 0) \cap A$. Since
  $\mu_s \in A$, there exists $r_* \in (0, \tau)$ such that
  $Q(r_*) = Q_* > Q_{\tau}$ and $Q_r(r) < 0$ for all $r \in (0,
  r_*]$. By \eqref{bound}, we have that
  $0 < Q_r^2 < 2 F_2(Q,\mu_s) Q^{-n}$ for all $Q \in
  [Q_*,1)$. However, since $\mu_s \in (\mu_2(Q_2), 0)$, there exists
  $Q_s \in (Q_2,1) \subset (Q_*,1)$ such that $\mu_s = \mu_2(Q_s)$. It
  follows that $F_2(Q_s,\mu_s) = F_2(Q_s,\mu_2(Q_s)) = 0$, which gives
  a contradiction.
\end{proof}

Define $\mu_c := \sup A$. By \eqref{mui order} of Lemma \ref{LEM mui
  properties} and Lemmas \ref{lem subset} and \ref{lem empty}, we have
that
\begin{equation*}
\mu_c = \sup A \in [\mu_3(Q_3), \mu_2(Q_2)] < 0.
\end{equation*}
The next lemma proves Proposition \ref{prp Q}.

\begin{lemma} \label{lem existence} For the solution $Q(r)$ of the IVP
  \eqref{IVP} with $\mu = \mu_c$, $Q_r(r) < 0$ for all
  $r \in (0,\infty)$, and $\lim_{r \rightarrow \infty} Q(r) \ge Q_*$.
\end{lemma}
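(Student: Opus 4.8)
The plan is a shooting argument that pins down case (iii) as the behavior at $\mu_c = \sup A$. Since $A$ is open by Lemma \ref{lem open} and $\mu_c = \sup A$, the value $\mu_c$ cannot lie in $A$; hence case (i) fails for the solution $Q$ with $Q_{rr}(0) = \mu_c$, and exactly one of case (ii) or case (iii) occurs. Case (iii) is literally the assertion of the lemma, since $\tau = \infty$ means $Q_r < 0$ on all of $(0,\infty)$, and $Q_\tau = \lim_{r \to \infty} Q(r) \ge Q_*$ is the remaining claim. So the entire content is to exclude case (ii).

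Suppose then that case (ii) holds at $\mu_c$, so that $\tau =: \tau_c < \infty$, $Q \in C^3([0,\tau_c])$, $Q(\tau_c) = Q_\tau \ge Q_*$, and $Q_r(\tau_c) = 0$. Because $Q_r < 0$ on $(0,\tau_c)$ and $Q_r(\tau_c) = 0$, a difference-quotient argument gives $Q_{rr}(\tau_c) \ge 0$. Passing to the limit $r \to \tau_c^-$ in the strict inequality \eqref{DE bound} and using $Q_r(\tau_c) = 0$ yields $Q_{rr}(\tau_c) \le F_3(Q_\tau, \mu_c)$, so $F_3(Q_\tau, \mu_c) \ge 0$. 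On the other hand, part (3) of Lemma \ref{LEM mui properties} states $F_3(Q_*, \mu) = g_3(Q_*) < 0$ for every $\mu$; therefore $Q_\tau \neq Q_*$, and hence $Q_\tau > Q_*$ strictly. Thus in case (ii) the solution attains a positive local minimum whose value is strictly above $Q_*$.

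Next I would show the minimum is nondegenerate, i.e. $Q_{rr}(\tau_c) > 0$. If instead $Q_{rr}(\tau_c) = 0$, then the constant function $Q \equiv Q_\tau$ solves \eqref{ODE} and shares the data $(Q, Q_r, Q_{rr}) = (Q_\tau, 0, 0)$ with our solution at $r = \tau_c$. Since for $r > 0$ and $Q > 0$ equation \eqref{ODE} can be solved for $Q_{rrr}$ and is therefore a regular third-order ODE, uniqueness of its initial value problem at $r = \tau_c$ would force $Q \equiv Q_\tau$ in a neighborhood of $\tau_c$, contradicting $Q_r < 0$ on $(0,\tau_c)$. Hence $Q_{rr}(\tau_c) > 0$: the solution genuinely turns around, decreasing to the value $Q_\tau > Q_*$ at $\tau_c$ and increasing immediately afterward.

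Finally I would close the argument with continuous dependence. Extend $Q$ slightly past $\tau_c$ (a regular IVP there) and fix $\sigma = \tau_c + \epsilon$; on $[0,\sigma]$ the minimum of $Q$ equals $Q_\tau > Q_*$, and $Q_r$ changes sign from negative to positive near $\tau_c$ because $Q_{rr}(\tau_c) > 0$. By Lemma \ref{lem EUC}(2), for $\mu'$ in some interval $(\mu_c - \delta, \mu_c + \delta)$ the solution $Q'$ is $C^3([0,\sigma])$-close to $Q$; consequently $\min_{[0,\sigma]} Q' > Q_*$ and $Q'_r$ still vanishes and reverses sign near $\tau_c$, so $Q'$ turns around at a value $> Q_*$ before ever descending to $Q_*$. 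Thus $Q'_{\tau(\mu')} > Q_*$, case (i) fails, and $\mu' \notin A$ for all such $\mu'$. Then $(\mu_c - \delta, \mu_c) \cap A = \emptyset$ together with $\mu_c \notin A$ forces $\sup A \le \mu_c - \delta < \mu_c$, contradicting $\mu_c = \sup A$. This excludes case (ii), leaving case (iii), which is the statement of the lemma. The main obstacle is exactly this treatment of case (ii): establishing that the turning value is strictly above $Q_*$ (via $F_3(Q_*,\cdot) < 0$) and that the turning point is nondegenerate, so that continuous dependence propagates the turn-around to a full neighborhood of $\mu_c$.
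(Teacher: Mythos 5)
Your proposal is correct and follows essentially the same route as the paper: both exclude case (ii) by splitting it into the three sub-cases $Q_\tau = Q_*$ (ruled out via $F_3(Q_*,\mu) = g_3(Q_*) < 0$ against $Q_{rr}(\tau) \ge 0$), $Q_{rr}(\tau) = 0$ (ruled out by uniqueness against the constant solution), and $Q_{rr}(\tau) > 0$ (ruled out by extending past $\tau$, applying continuous dependence from Lemma \ref{lem EUC}, and contradicting $\mu_c = \sup A$). The only difference is cosmetic ordering — you establish $Q_\tau > Q_*$ first and then split on $Q_{rr}(\tau)$, whereas the paper lists the three sub-cases up front — but the arguments themselves coincide.
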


\begin{proof}
  Since $A$ is open by Lemma \ref{lem open},
  $\mu_c = \sup A \not\in A$. Then the only possibilities are cases
  (ii) and (iii). It remains to show that case (ii) does not happen.

  If case (ii) happens, then $Q_{rr}(\tau)$ must be nonnegative since
  $Q_r(r) < 0$ for all $r \in (0, \tau)$ and $Q_r(\tau)=0$. We further
  divide case (ii) into the following three mutually exclusive
  sub-cases:
  \begin{enumerate}
    \renewcommand{\labelenumi}{(ii.\alph{enumi})}
  \item $\tau < \infty$, $Q(\tau) = Q_{\tau} > Q_*$, $Q_r(\tau) = 0$,
    and $Q_{rr}(\tau) > 0$.
  \item $\tau < \infty$, $Q(\tau) = Q_{\tau} > Q_*$, $Q_r(\tau) = 0$,
    and $Q_{rr}(\tau) = 0$.
  \item $\tau < \infty$, $Q(\tau) = Q_{\tau} = Q_*$, and
    $Q_r(\tau) = 0$.
  \end{enumerate}

  If (ii.a) happens, we can further extend the solution. Specifically,
  since $Q_r(\tau) = 0$ and $Q_{rr}(\tau) > 0$, there is a small
  $\epsilon > 0$ such that $Q(r) \ge Q_{\tau} > Q_*$ for all
  $r \in [0, \tau+\epsilon]$, $Q_r(r) < 0$ for all
  $r \in (0, \tau-\epsilon]$, and $Q_r(\tau+\epsilon) > 0$. Then by
  Lemma \ref{lem EUC}, there is a $\delta > 0$ such that for each
  $\mu \in (\mu_c - \delta, \mu_c + \delta)$, the solution is above
  $Q_*$ for all $r \in [0, \tau+\epsilon]$ with a local minimum in the
  interval $[\tau-\epsilon, \tau+\epsilon]$. This implies that
  $(\mu_c - \delta, \mu_c + \delta) \cap A = \emptyset$ by the
  definition of $A$. This cannot be true since $\mu_c = \sup A$.

  Case (ii.b) is impossible since the ODE \eqref{ODE} with the initial
  condition $Q(\tau) = Q_{\tau}$, $Q_r(\tau) = 0$, and
  $Q_{rr}(\tau) = 0$ has a unique solution $Q(r) \equiv Q_{\tau}$.

  If (ii.c) happens, then evaluating \eqref{DE bound} at $r = \tau$
  gives $Q_{rr}(\tau) \le F_3(Q_*,\mu_c) = g_3(Q_*)< 0$. This is
  impossible since $Q_{rr}(\tau)$ must be nonnegative when case (ii)
  happens.
\end{proof}

\subsection{Proof of Proposition \ref{prp suff cond} and Example for Exponential Convergence} \label{prf prp exp conv}

First we prove:
\begin{lemma} \label{lem Qr} If $Q_r(r) < 0$ for all
  $r \in (0,\infty)$ and
  $\lim_{r \rightarrow \infty} Q(r) = Q_{\tau} > 0$, then
  $\lim_{r\rightarrow\infty}Q_r(r) = 0$ and
  $\lim_{r\rightarrow\infty}Q_{rr}(r) = 0$.
\end{lemma}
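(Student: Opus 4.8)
The plan is to squeeze the conclusion out of the two a priori identities already in hand, \eqref{Qrr} and \eqref{bound}, by establishing enough uniform control on $Q_r$ and $Q_{rr}$ to pass to the limit $r \to \infty$. First I would observe that under the hypotheses $Q_r(r) < 0$ on all of $(0,\infty)$ together with $Q(r) \to Q_\tau > 0$, the quantity $\tau$ defined in \eqref{tau} equals $+\infty$, so \eqref{Qrr} and \eqref{bound} are valid for every $r \in (0,\infty)$, with the corresponding $Q = Q(r)$ ranging over the compact set $[Q_\tau, 1]$, which is bounded away from $0$. Since $Q$ is monotone decreasing with $Q(0) = 1$, the fundamental theorem of calculus gives $\int_0^R (-Q_r)\,dr = 1 - Q(R) \to 1 - Q_\tau < \infty$, so $Q_r \in L^1(0,\infty)$.

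Next I would extract boundedness of both derivatives. On the range $Q \in [Q_\tau, 1]$ the function $2 F_2(Q,\mu) Q^{-n}$ is continuous, hence bounded, so \eqref{bound} shows $Q_r$ is bounded. Feeding this into \eqref{Qrr}, I note that $F_1(Q,\mu)$ is bounded (again because $Q$ stays in a compact subset of $\R_{>0}$), that the integral $\int_1^Q Q_r^2 q^{-2}\,dq$ is bounded since its integrand is bounded on $q \in [Q_\tau, 1]$, and that $(d-1) Q_r / r \to 0$. Hence $Q_{rr}$ is bounded on $(0,\infty)$, so $Q_r$ is Lipschitz and in particular uniformly continuous. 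Combining this with $Q_r \in L^1(0,\infty)$ and invoking the standard fact that a uniformly continuous $L^1$ function must tend to $0$ at infinity, I conclude $\lim_{r \to \infty} Q_r(r) = 0$.

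Finally I would return to \eqref{Qrr} to handle $Q_{rr}$. As $r \to \infty$, every term on the right-hand side converges: $F_1(Q,\mu) \to F_1(Q_\tau,\mu)$; the definite integral $\int_1^Q Q_r^2 q^{-2}\,dq$ converges because its upper limit $Q(r) \to Q_\tau$ and its integrand is integrable on $[Q_\tau,1]$; and $(d-1) Q_r / r \to 0$. On the left-hand side, $n Q_r^2 / (2Q) \to 0$ by the previous step. Therefore $Q_{rr}$ converges to some limit $\ell$. To force $\ell = 0$, I would apply the mean value theorem to $Q_r$ on $[r, r+1]$: there is $\xi_r \in (r, r+1)$ with $Q_r(r+1) - Q_r(r) = Q_{rr}(\xi_r)$, where the left side tends to $0$ while the right side tends to $\ell$, giving $\ell = 0$, i.e.\ $\lim_{r \to \infty} Q_{rr}(r) = 0$.

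I expect the crux to lie in the second step rather than in the limit passages: integrability of $Q_r$ by itself does not force $Q_r \to 0$, so the argument hinges on first wringing a uniform bound on $Q_{rr}$ out of \eqref{Qrr}. The key structural facts making this possible are that $Q$ remains in a compact subset of $\R_{>0}$ (so $F_1$, $F_2$, and the $q^{-2}$ weight cause no trouble) and that the only apparently singular term, $Q_r/r$, is in fact harmless as $r \to \infty$.
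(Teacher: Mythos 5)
Your proof is correct and follows essentially the same route as the paper's: bound $Q_r$ via \eqref{bound}, bound $Q_{rr}$ via the integrated identity, use convergence of $Q$ (equivalently $Q_r \in L^1$) to force $Q_r \to 0$, and then pass to the limit in \eqref{Qrr} to conclude $Q_{rr} \to 0$. The only cosmetic differences are that you obtain a two-sided bound on $Q_{rr}$ from the equality \eqref{Qrr} and invoke the standard uniform-continuity-plus-$L^1$ fact, whereas the paper uses only the one-sided bound $Q_{rr} < M$ from \eqref{DE bound} together with $Q_r < 0$ and runs the same contradiction argument by hand, leaving the final mean-value-theorem step implicit.
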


\begin{proof}
  By the hypothesis of the lemma, we have \eqref{Qrr}, \eqref{bound},
  and \eqref{DE bound} for all $r \in (0, \infty)$ and
  $Q = Q(r) \in (Q_{\tau}, 1)$. In the limit $r \rightarrow \infty$
  (and correspondingly $Q \rightarrow Q_{\tau}^+$), inequality
  \eqref{bound} implies that $|Q_r|$ is bounded. Then by \eqref{DE
    bound}, there is some $M>0$ such that $Q_{rr} < M$ for all
  $r \in (0, \infty)$ and correspondingly for all
  $Q = Q(r) \in (Q_{\tau}, 1)$. Suppose $Q_r$ does not converge to $0$
  as $r \rightarrow \infty$. Recall that $Q_r(r) < 0$ for all
  $r \in (0,\infty)$. Then we can find an $\epsilon > 0$ and a
  sequence $\{ r_i>0 \}_{i=1}^{\infty}$ with
  $r_i + \frac{\epsilon}{M} < r_{i+1}$ such that
  $Q_r(r_i) < -\epsilon$ for all $i$. It follows that
  \begin{equation*}
  \int_{r_1}^{\infty} Q_r dr < \sum_{i=1}^{\infty}
  \int_{r_i}^{r_i+\frac{\epsilon}{M}} Q_r dr < \sum_{i=1}^{\infty}
  \int_{r_i}^{r_i+\frac{\epsilon}{M}} (-\epsilon+M(r-r_i)) dr =
  \sum_{i=1}^{\infty} \frac{-\epsilon^2}{2M} \longrightarrow -\infty.
  \end{equation*}
  This is impossible as $Q(r)$ needs to converge to
  $Q_{\tau} > 0$. Next, taking the limit $r \rightarrow \infty$ in
  \eqref{Qrr}, we have that $Q_{rr}$ converges to a
  constant. Furthermore, since $Q_r$ converges, $Q_{rr}$ must converge
  to $0$ as $r \rightarrow \infty$.
\end{proof}

With this, we can now prove Proposition \ref{prp suff cond}.

\begin{proof} (Proposition \ref{prp suff cond}) Taking the limit
  $r \rightarrow \infty$ on both sides of \eqref{Qr2}, we obtain that
  the right-hand side of \eqref{Qr2} is zero when the upper limits of
  the two integrals are both $Q_{\tau}$. Since
  $\int_1^Q = \int_1^{Q_{\tau}} + \int_{Q_{\tau}}^Q$, we have that
  \begin{align}
    \frac{1}{2} Q^n Q_r^2 
    ={}& \int_{Q_{\tau}}^Q F_1(q,\mu) q^n dq - \int_{Q_{\tau}}^Q \left( \frac{n}{2} \int_1^q \frac{Q_r^2}{\hat{q}^2} d\hat{q} + (d-1) \frac{Q_r}{r} \right) q^n dq \notag \\
    >{}& \int_{Q_{\tau}}^Q F_1(q,\mu) q^n dq - \int_{Q_{\tau}}^Q \left( \frac{n}{2} \int_1^q \frac{Q_r^2}{\hat{q}^2} d\hat{q} \right) q^n dq , \label{LB on Qr}
  \end{align}
  where the inequality follows from the hypothesis that $Q_r(r) < 0$
  and $Q(r) > Q_{\tau}$ for all $r \in (0,\infty)$. Differentiating
  \eqref{LB on Qr} with respect to $Q$ yields that
  \begin{equation*}
  \left( F_1(Q,\mu) - \frac{n}{2} \int_1^Q \frac{Q_r^2}{\hat{q}^2} d\hat{q}
  \right) Q^n ,
  \end{equation*}
  which is zero in the limit
  $Q \rightarrow Q_{\tau}^+$. This can be shown by taking the limit
  $r \rightarrow \infty$ on both sides of \eqref{Qrr}. Differentiating
  the above expression with respect to $Q$ once more gives
  \begin{equation*}
  \left( \frac{d}{dQ}F_1(Q,\mu) - \frac{n Q_r^2}{2 Q^2} \right) Q^n +
  \left( F_1(Q,\mu) - \frac{n}{2} \int_1^Q \frac{Q_r^2}{\hat{q}^2}
    d\hat{q} \right) nQ^{n-1} .
  \end{equation*}
  In the limit
  $Q \rightarrow Q_{\tau}^+$, the second part of the above expression
  is zero while by Lemma \ref{lem Qr} the first part converges to
  \begin{equation*}
  L:=\left.\frac{d}{dQ}F_1(Q,\mu) \right|_{Q=Q_{\tau}} =
  \frac{1}{Q_{\tau}^n} - \frac{n}{c Q_{\tau}} ,
  \end{equation*}
  which is positive
  for $Q_{\tau} < (\frac{c}{n})^{\frac{1}{n-1}}$. It follows that
  $\frac{1}{2} Q^n Q_r^2 > \frac{1}{2} L (Q - Q_{\tau})^2 > 0$ for
  $Q(r)$ sufficiently close to $Q_{\tau}$. This implies the
  exponential convergence of $Q(r)$ towards $Q_{\tau}$.
\end{proof}

Below we demonstrate a way to check the conditions of Proposition \ref{prp suff cond} for specific values of $d$, $n$, and $c$ using a combination of analysis and numerical integration. Although we have selected some particular
choices for $d$, $n$, and $c$ here, we note that this strategy is deployable for other parameter values as well. Furthermore, this strategy can be made into a computer-assisted proof if the numerical computation is done using rigorous numerics.

\begin{example} \label{prp exp conv} For $d=3$, $n=2.5$, and $c=1.7$, numerical integration of (\ref{IVP}) provides evidence that the conditions of Proposition \ref{prp suff cond} hold. Thus it is believed that the monotonic solution $Q(r)$ to the IVP \eqref{IVP} with $\mu=\mu_c$ converges exponentially to $Q_{\tau}$ as described in Proposition \ref{prp suff cond}.
\end{example}

\begin{proof}[Explanation of Example \ref{prp exp conv}]
Substituting
  $\mu = \mu_c$ and taking the limit $Q(r) \rightarrow Q_{\tau}^+$ in
  \eqref{Qrr}, with Lemma \ref{lem Qr} we obtain that
  \begin{equation*}
  F_1(Q_{\tau},\mu_c) = \frac{n}{2} \int_1^{Q_{\tau}}
  \frac{Q_r^2}{q^2} dq < 0.
  \end{equation*}
  In addition, from \eqref{DE bound}
  and Lemma \ref{lem Qr} we have that
  \begin{equation*}
  F_3(Q_{\tau},\mu_c) > 0.
  \end{equation*}
  Thus, the point $(Q_{\tau},\mu_c)$ must lie in the region below the graph of $\mu_1(Q)$, where $F_1(Q,\mu)<0$, and above the graph of $\mu_3(Q)$, where $F_3(Q,\mu)>0$ (see Figure \ref{FIG Fi} and Figure \ref{FIG MUi}). Note that item (4) of Lemma \ref{LEM mui properties} guarantees that such a region is nonempty.  

  Recall that $\mu_1(Q)$ has a global minimum in the interval
  $(Q_*, 1)$ at $Q= Q_1 = (\frac{c}{n})^{\frac{1}{n-1}}$. For $d = 3$, $n = 2.5$, and $c = 1.7$, the minimum $\mu_1(Q_1) \approx -0.02146 < -0.021$ (see Figure \ref{FIG MUi}). If for $\mu = -0.021$ there exists a finite $r_*$ such that the solution $Q(r)$ to the IVP \eqref{IVP} satisfies that $Q(r_*) = Q_*$ and $Q_r(r) < 0$ for all $r \in (0, r_*]$, then $\mu = -0.021 \in A$ by Definition \ref{def A}. It follows that $\mu_c = \sup A > -0.021 > \mu_1(Q_1)$. Since the point $(Q_{\tau},\mu_c)$ now must also be above the line $\mu = -0.021$ in addition to being bounded between the graphs of $\mu_1(Q)$ and $\mu_3(Q)$, it can only be in either the region with $Q_{\tau} < Q_1 = (\frac{c}{n})^{\frac{1}{n-1}}$ or the region with $Q_{\tau} > Q_1 = (\frac{c}{n})^{\frac{1}{n-1}}$ (see Figure \ref{FIG MUi}). The latter is impossible since the linearization of \eqref{ODE} at such a $Q_{\tau}$ has oscillatory dynamics that forbid a solution from converging to $Q_{\tau}$ monotonically. Then by Proposition \ref{prp suff cond}, exponential convergence ensues.
  
  Finally, we present numerical evidence that for $d = 3$, $n = 2.5$, and $c = 1.7$, the solution to the IVP \eqref{IVP} with $\mu = -0.021$ decreases monotonically and drops below $Q_*$ in finite $r$. We numerically integrate the ODE \eqref{ODE}. However, to circumvent the singularity at $r=0$, we first seek a series solution to the IVP \eqref{IVP} in the form of $Q(r) = 1 + \frac{1}{2}\mu r^2 + \sum_{k=3}^{\infty} a_k r^k$. For $\mu = -0.021$, we obtain that 
  \begin{align*}
  Q(r) \approx 1 - 0.0105 r^2 + 0.0002194963 r^4 - 0.0000027819 r^6 + O(r^8) .
  \end{align*}
  Evaluating the above polynomial approximation and its first and second derivatives at $r=0.01$ gives that $Q(0.01) \approx 0.9999989500$, $Q_r(0.01) \approx -0.0002099991$, and $Q_{rr}(0.01) \approx -0.0209997366$. Then we use these values as the initial conditions at $r=0.01$ and integrate \eqref{ODE} using the MATLAB ode45 function with both the relative error tolerance and the absolute error tolerance set to $10^{-10}$. The plots of the numerical solutions for $Q(r)$ and $Q_r(r)$ are shown in Figure \ref{FIG NumSol}. Notice that for $r_* \approx 16.969$, $Q(r_*) = Q_*$ and $Q_r(r) < 0$ for all $r \in (0, r_*]$.
\end{proof}

\begin{figure}[h]
  \centering 
    \subfigure[$Q$ vs $r$.]{\label{FIG Q}\includegraphics[scale=0.277]{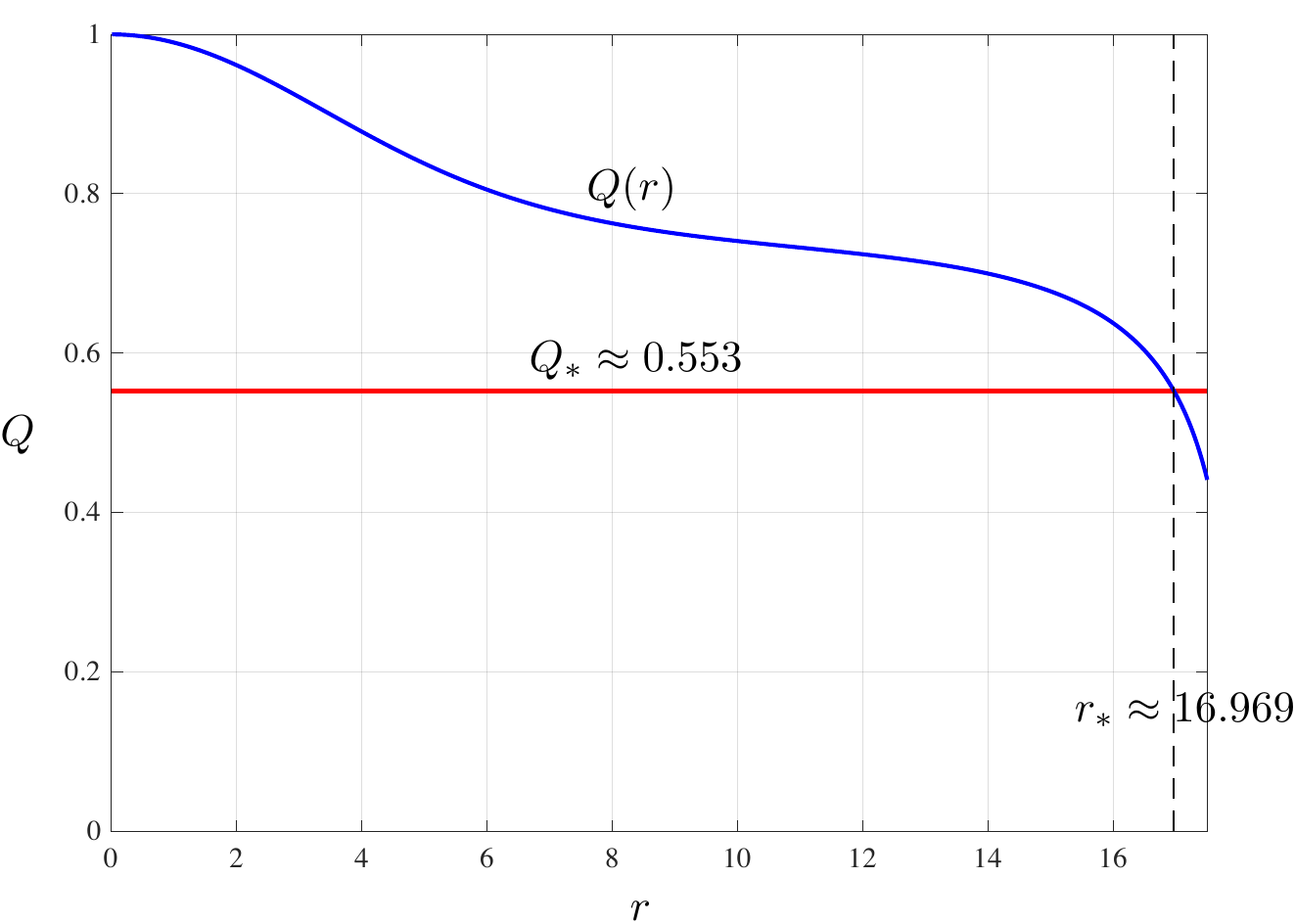}} \hspace{0.01in}
    \subfigure[$Q_r$ vs $r$.]{\label{FIG Qr}\includegraphics[scale=0.277]{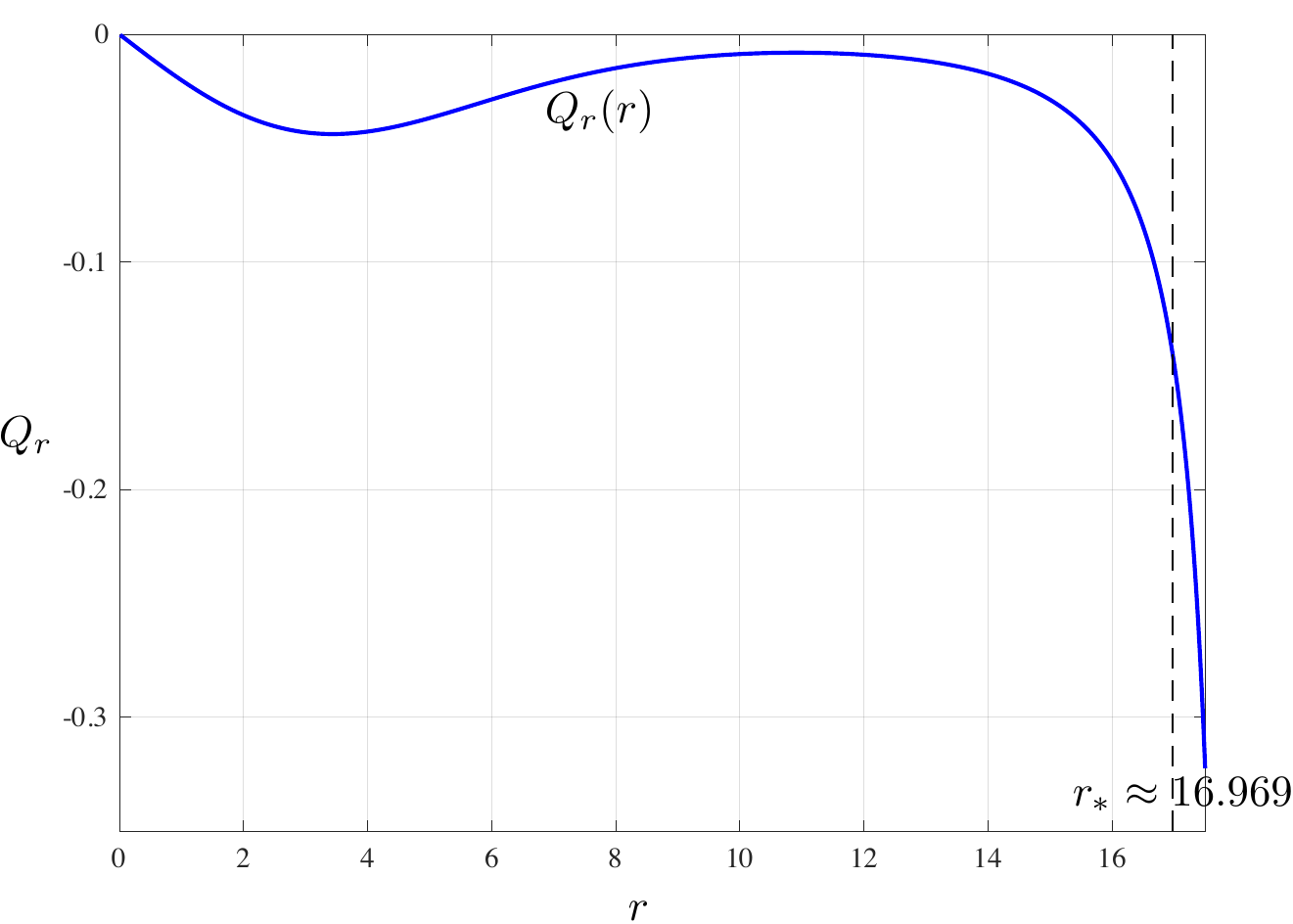}}
  \caption{The plots of $Q(r)$ and $Q_r(r)$, obtained by numerically integrating the ODE \eqref{ODE} with the initial conditions $Q(0.01) = 0.9999989500$, $Q_r(0.01) = -0.0002099991$, and $Q_{rr}(0.01) = -0.0209997366$. The parameter values are $d = 3$, $n = 2.5$, and $c = 1.7$. $Q_* \approx 0.553$ is the unique zero in $(0, 1)$ of the function $h_3(Q)$ given by \eqref{h3}. Note that $Q(r)$ crosses $Q_*$ at $r_* \approx 16.969$ and that $Q_r(r)$ is negative for all $r \in (0, r_*]$.}
  \label{FIG NumSol}
\end{figure}

\section{Discussion}
\label{s:discussion}

In this work we have established the well-posedness of solutions to
the time dependent problem \eqref{magma}, and demonstrated the
existence of monotonically decaying traveling wave solutions.  The
major advancement here was to obtain such results in dimensions two
and higher.  Here, we remark on several aspects of our results.

First, our results were obtained in the ``constant bulk viscosity'' case
and with integer nonlinearity $n$ for the permeability.
This was largely to simplify presentation, and we believe our results
could be extended via the same methods to the more general case,
\begin{equation*}
  \phi_t + \partial_{x_d} \left( \phi^n \right) - \nabla \cdot \left(\phi^n\nabla (\phi^{-m}\phi_t)\right) = 0,
\end{equation*}
allowing for non-integer exponents.  Typical values of $m$ are $m\in[0,1].$

Next, with respect to the time-dependent problem, we made no mention
of global-in-time results.  Global-in-time results were obtained in
\cite{Simpson:2007kx}  in $d=1$ for certain choices of the nonlinearity by
making use of conservation laws of the form, in $d=1$,
\begin{equation}
\label{e:conlaw}
\int \frac{1}{2} |\phi^{-m}\partial_x\phi|^2 + \frac{\phi^{2-n-m} -1 + (n+m-2)(\phi-1)}{(n+m-1)(n+m-2)}
\end{equation}
provided $n+m\neq 1,2$.  An inspection of this shows that for
$\phi-1\in H^1$, it is nonnegative and convex about the $\phi =1$
solution.  Other expressions hold in the cases $n+m =1$
and $n+m=2$.  For certain values of $n$ and $m$, \eqref{e:conlaw}
provides {\it a priori} an upper bound on the $H^1$ norm and a lower
bound on $\phi$, preventing it from going to zero.  This lower bound
makes use of the Sobolev embedding of $H^1$ into $L^\infty$, which
does not hold in higher dimensions.  Since our analysis requires
pointwise control of a lower bound, any analog would require {\it a
  priori} estimates on the higher index Sobolev spaces.
Unfortunately, for general nonlinearities, no
higher order conservation laws are anticipated (see
\cite{Harris:1999if} for some exceptions).

With regard to our main result on the solitary waves, Theorem \ref{t:main2}, 
we note that our result is somewhat different from what might be expected
by the computational geophysics community.  In, for instance, 
\cite{Simpson:2011fx, Scott:1986kf,scott1984magma,Barcilon:1986wd,Barcilon:1989ve,Wiggins:2012iv}, a value of 
$\bar{c}>n$ is specified, and then a unimodal profile is obtained numerically that is 
observed to decay exponentially fast towards one.  In those works, $\bar{c}$ is the 
only free parameter and the amplitude is unknown.  Here, due to the rescaling \eqref{res}, 
for each $1.55 < c < n$ we obtain a unimodal profile whose amplitude is fixed at $1$, 
but whose limiting value $Q_{\tau}$ is unknown. Note that if we choose $\bar{q}_0 = Q_{\tau}^{-1}$ 
in \eqref{res}, then $\bar{c} = Q_{\tau}^{1-n} c$. In this case, the profile $\bar{Q}$ 
decays to $1$ at infinity, and the condition $Q_{\tau} < (\frac{c}{n})^{\frac{1}{n-1}}$ for
exponential decay (Proposition \ref{prp suff cond}) is exactly equivalent to $\bar{c}>n$. On 
the other hand, since we do not know the dependence between $Q_{\tau}$ and $c$, our result 
is unable to guarantee that $\bar{c} = Q_{\tau}^{1-n} c$ can be matched to a pre-specified value. 

While we have not succeeded at proving exponential decay, we have provided a criterion on the solitary wave profile, which, if satisfied, ensures that such a profile decays exponentially to $Q_{\tau}$; this criterion states that if $Q_{\tau} < (\frac{c}{n})^{\frac{1}{n-1}}$, then the exponential decay does in fact occur. We have also demonstrated that for some specific parameter values, a mix of analysis and numerical evidence suggests that this criterion is satisfied.

\section{Acknowledgements}
We gratefully acknowledge the National Science Foundation, which has
generously supported this work through grants DMS-1515849 [DMA],
DMS-1409018 [GRS] and DMS-1511488 [JDW].

\bibliographystyle{plain}

\bibliography{magma_refs_rev}

\end{document}